\documentclass{article}
\usepackage{amsfonts}
\usepackage{amsthm}
\usepackage{amsmath}
\usepackage{amssymb}
\usepackage{graphicx}%
\usepackage{epsfig}
\usepackage{color}
\usepackage{dsfont}
 \oddsidemargin  -0.2in \evensidemargin -0.2in
\topmargin-0.3in
 \textwidth      6.3in
\setlength{\textheight}{20.8cm}
\providecommand{\U}[1]{\protect\rule{.1in}{.1in}}

\def\rit{\hbox{\it I\hskip -2pt R}}
\def\eit{\hbox{\it I\hskip -2pt E}}

\def\subneq{\mathop{\raise 0.7ex \hbox{$\subset$}}\!\!\!\!\!\!{\raise -0.6ex\hbox{$\neq$}}\,}

\def\div{\mathop{\rm div}\nolimits}

\def\u{\underline}

\def\RR{\mathbb{R}}

\def\1{{1\hskip-0.25em{\rm l}}}

\def\sobre#1#2{\lower 1ex \hbox{ $#1 \atop #2 $ } }
\def\bajo#1#2{\raise 1ex \hbox{ $#1 \atop #2 $ } }

\definecolor{darkred}{rgb}{0.9,0.1,0.1}

%%%%%%%%%%%%%%%%%%%%%%%%%%%%%%%%%%%%%%%%%%%%%%%%%%%%%%%%%%%%%%%

\def\div{{\rm div}}
\def\ep{\varepsilon}

\def\p{\partial}

\def\O{\Omega}

\def\u2{{u^\ep \over \ep^2 }}
\def\u3{{\displaystyle {\bar u}^\ep \over \ep^2 }}

\def\div{\mathop{\rm div}\nolimits}

\def\u{\underline}

\def\sobre#1#2{\lower 1ex \hbox{ $#1 \atop #2 $ } }
\def\bajo#1#2{\raise 1ex \hbox{ $#1 \atop #2 $ } }

%%%%%%%%%%%%%%%%%%%%%%%%%%%%%%%%%%%%%%%%%%%%%%%%%%%%%%%%%%%%%%%

\def\div{{\rm div}}

\def\ep{\varepsilon}

\def\p{\partial}

\def\u2{{u^\ep \over \ep^2 }}
\def\u3{{\displaystyle {\bar u}^\ep \over \ep^2 }}
\def\p{\partial}
\def\pej{\mbox{\rm Pe}_{j}}
\def\pei{\mbox{\rm Pe}_{i}}

\def\dsp{\displaystyle}

\newtheorem{theorem}{Theorem}

\newtheorem{corollary}[theorem]{Corollary}

\newtheorem{definition}[theorem]{Definition}

\newtheorem{lemma}[theorem]{Lemma}

\newtheorem{proposition}[theorem]{Proposition}
\newtheorem{remark}[theorem]{Remark}

\begin{document}
\title{Homogenization of the linearized ionic transport
equations in random porous media\thanks{The research of A.M. was supported in part by  the project %$\includegraphics[scale=0.1]{UPGEO-couleurs.png} \hspace{1mm}$
 UPGEO $\langle$ ANR-19-CU05-032 $\rangle$ of the French National Research Agency (ANR), by the LABEX MILYON (ANR-10-LABX-0070) of
Universit\'e de Lyon, within the program "Investissements d'Avenir" (ANR-11-IDEX-0007) operated by the French National Research Agency (ANR) and  by the project "Ecoulements \'electrocin\'etiques \`a travers un milieu poreux al\'eatoire", GDR MOMAS CNRS 2015.}}
\author{ Andro Mikeli\'c \\ E-mail: {\tt mikelic@univ-lyon1.fr} \\
Univ Lyon, Universit\'e Claude Bernard Lyon 1,
 CNRS UMR 5208, \\
 Institut Camille Jordan, 43 blvd. du 11 novembre 1918, \\
 F-69622 Villeurbanne cedex, France
 \and  Andrey Piatnitski\\ E-mail: {\tt apiatnitski@gmail.com} \\
              The Arctic University of Norway, campus Narvik, Postbox 385, Narvik 8505, Norway\\
             and \\ Institute for Information Transmission Problems of RAS, Bolshoi Karetny per., 19, \\ 127051, Moscow, Russia
}
%\date{\today}
\date{\it \normalsize This contribution is dedicated to the memory of Professor Vassily Jikov. Stochastic homogenization and partial differential equations were between his favorite subjects and we give homage to his works.}

\maketitle

%%%%%%%%%%%%%%%%%%%%%%%%%%%%%%%%%%%%%%%%%%%%%%%%%%%%%%%%%%%%%%%%%%%%
\begin{abstract}
In this paper we extend the homogenization results obtained in \cite{AllMiPi:10} (G. Allaire, A. Mikeli\'c, A. Piatnitski, J. Math. Phys. 51 (2010), 123103)  for a system of partial differential equations describing the transport of a N-component electrolyte in a dilute Newtonian solvent through a rigid periodic porous medium, to the case of random disperse porous media. We present a study of the nonlinear Poisson-Boltzmann equation in a random medium, establish convergence of the stochastic homogenization procedure and prove well-posedness of the two-scale homogenized equations. In addition, after separating scales, we prove that the effective tensor satisfies the so-called Onsager properties, that is the tensor is symmetric and positive definite. This result shows that the Onsager theory applies to random porous media. The strong convergence of the fluxes is also established.
\end{abstract}

\noindent
{\bf Keywords}: Boltzmann-Poisson equation, homogenization, electro-osmosis, random porous media.
%\maketitle

\section{Introduction}\label{Intro}
The quasi-static electroosmotic phenomena in porous media are present  in many problems of applied interest. As examples we mention electromigration of solutes (see for instance Ottosen et al \cite{Ottosen}), dewatering (see Mahmoud et al \cite{Mahmoud}) and permeability reduction in concretes (see Cardenas \& Struble \cite{Cardenas}).

%{\color{blue}
We consider a porous material saturated by
%A porous material is saturated by
 an $N$-component dilute electrolyte. The solid surfaces of the porous skeleton are electrically charged, attract ions of the opposite charge and repel the ones of the same charge. %We suppose a presence of
 Simultaneously an external electrical field $\mathbf{E}$ %which sets ions in motion. In addition we suppose
 and a body force $\mathbf{f}$ are applied, %contributing, together with the electrokinetic force, to the flow of the %electrolyte. Joint action of these phenomena
 generating a hydrodynamical flow, a migration of ions and creation of an electrical double layer (EDL).% is created.

The modeling of phenomena at the pore level is well understood (Lyklema \cite{Lyklema}, Karniadakis et al  \cite{KBA:05}).
 %These continuum equations are posed in the pores (the microstructure) of a porous medium. Solving the flow equations coupled %with the Poisson-Boltzmann equation and the mass conservation equations for the N-components of the electrolyte
 The porous media are heterogeneous and have a pore structure consisting of a very large number of pores. In studying flows generated by the electro-osmotic phenomena, the pore size is of the same order as the size of the Debye layer ($\approx 100$ nanometers). Solving the partial differential equations of the model is a difficult task even for simple geometries and out of reach for realistic porous media with nanoscopic pores. The remedy for the complexity of the problem is to homogenize or upscale the equations posed %in the microscopic geometry
 at the pore scale and derive a new upscaled system
 %valid in the whole domain, containing both the solid skeleton and the pores filled with electrolyte.
 %The upscaled equations will hold
 valid at every point. The microscopic geometry would influence the effective coefficients.

The homogenization approach allows finding out how close the solutions to upscaled model are to  those of the original physical equations, given at the pore scale. In the existing literature on the charged porous media, the homogenization technique was mainly applied under the hypothesis of the {\it periodic} porous media and for the so-called {\it ideal model, i.e. with ions at infinite dilution}.
There are works which concern the study and homogenization of the Nernst-Planck-Poisson system (see for instance Ray et al \cite{Rayetal:12}-\cite{Rayetal:12b} and Schmuck et al \cite{schmuck09}-\cite{schmuckBaz}). In the physical chemistry literature, its semi-linearized form, due to O'Brien \& White \cite{OBW:78}, replaces the full model from Lyklema \cite{Lyklema} and Karniadakis et al  \cite{KBA:05}. Looker \& Carnie  studied  O'Brien's model using the two-scale expansion in \cite{LC:06}. They derived  constitutive laws linking the effective fluxes with the pressure gradient and the gradient of the chemical potential. In addition, the Onsager tensor, containing the effective coefficients, was obtained.   The   rigorous convergence result for the homogenization process is due to Allaire et al \cite{AllMiPi:10}, where the positive definiteness of the Onsager tensor and its symmetry were proved too.  A numerical and qualitative analysis of the effective coefficients is in Allaire et al \cite{AllBrizzDufMiPi:12}. The results of Looker \& Carnie \cite{LC:06} and Allaire et al \cite{AllMiPi:10}, \cite{AllBrizzDufMiPi:12} support findings in the earlier work of
 Adler et al \cite{CSTA:96}, \cite{MSA:01},  \cite{AM:03} and  \cite{GCA:06}, on the particular flows and the computation of the effective coefficients forming  Onsager's tensor.

The electro-osmotic phenomena are important also in {\it deformable} porous media, like clays.
For the homogenization studies of the electro-osmosis with swelling, using two-scale expansions,
%has also been studied for the fully nonlinear coupled problem in deformable periodic porous media.
we refer to the series of articles by Moyne \& Murad
%considered the case of ,without linearization, in the series  of articles
\cite{MM:02}, \cite{MM:03}, \cite{MM:06}, \cite{MM:06a} and \cite{MM:08}.  A further reference is
%We mention in the same direction
 the work of Dormieux et al \cite{Dorm:03}. A  rigorous homogenization of the electro-osmotic flows in deformable porous media, with a derivation of the Onsager relation and determination of the swelling pressure, was obtained in Allaire et al \cite{ABDM2015}.

We note that models that are more realistic  involve electrolytes at {\it finite dilution}. Several complicated mathematical models
were developed to take into account the finite size of the ions.  One of them is the {\it Mean Spherical Approximation (MSA)} model (Dufr\^eche et al \cite{Dufreche05}). Its well-posedness and homogenization  was  studied by Allaire et al \cite{AllBrizzDufMiPi:14}.

Finally, the effects of nonlinearities were studied for equilibrium solution (Ern et al \cite{ern}, Allaire et al \cite{beta}).

%It is important to point out that almost all above quoted either homogenization or formal two-scale expansion results were %obtained for {\it periodic porous media}. Nevertheless

The realistic soils are {\it random geometric structures} and questions related to the upscaling of the electro-osmotic flows were  studied only through numerical simulations.  We mention the publications of Adler and al in \cite{CSTA:96}, \cite{MSA:01} and \cite{AM:03}, where the electrokinetic flows through random packings of spheres and ellipsoids were considered.
%}

The goal of the present paper is to study {\it upscaling of the  ideal model
describing the transport of a dilute $N$-component electrolyte in
a rigid {\bf random} porous medium}. We first briefly recall the dimensionless equations, which were the starting point for the periodic homogenization in \cite{AllMiPi:10}, \cite{AllBrizzDufMiPi:12} and \cite{AllBrizzDufMiPi:14}. Since the modeling and derivation of the dimensionless form were undertaken in detail in these references (and especially in \cite{AllBrizzDufMiPi:14}) we do not dwell on the subject and simply
%{\color{blue}
start with the dimensionless ideal model equations  from the above references.
%}

%(identical to those in \cite{LC:06}), as well as some asymptotics
%of the Poisson-Boltzmann equation for small and large pores,
%proved in \cite{beta}.

The (dimensionless) equations are given on a typical realization of the porous medium $G$. If $\omega$ takes values in the probability space, we  denote by $ G_f^\varepsilon(\omega)$ a realization of the pore space, which is an open set filled with a fluid. The equations then read as follows:
%\comment{I think the  equation coloured in magenta should be removed}
\begin{gather}
\ep^2 \Delta \mathbf{u}^\ep - \nabla p^\ep = \mathbf{f}^* - \sum_{j=1}^N z_j n_j^\ep (x) \nabla\Psi^\ep    \ \mbox{ in } G_f^\varepsilon(\omega), \label{EPPR1} \\
\mathbf{u}^\ep =0 \ \mbox{ on } \, \p G_f^\varepsilon(\omega), \quad
\div \ \mathbf{u}^\ep =0 \quad \mbox{in } \, G_f^\varepsilon(\omega), \label{EPPR6}\\
%\mbox{ and } \ p^\ep \quad \mbox{are } \ 1-\mbox{periodic in } \; x , \label{EPPR6bc}\\
 -\ep^2 \Delta \Psi^\ep = \beta \sum_{j=1}^N  z_j n_j^\ep (x) \quad \mbox{in } \ G_f^\varepsilon(\omega);  \label{EPPR3b}\\
% {\color{magenta}  n_j^\ep (x) = n_j^c \exp \{ -  z_j \Psi^\ep  \} , \; j=1, \dots , N, \quad \mbox{in} \; G_f^\varepsilon(\omega);\label{EPPR3} }\\
\ep \nabla \Psi^\ep \cdot \nu = -N_\sigma \sigma  \; \mbox{ on } \, \partial G_f^\varepsilon(\omega) \setminus \p G,\quad \Psi^\ep  = - \Psi^{ext}  \; \mbox{ on } \, \partial G,
   \label{EPPR2} \\
\mbox{div } \big(  n_j^\ep \nabla \ln (n_j^\ep  e^{\Psi^\ep z_j } ) - \pej n_j^\ep \mathbf{u}^\ep  \big) =0 \ \mbox{ in } \; G_f^\varepsilon(\omega), \label{Nernst1} \\
%\mathbf{u}^\ep =0 \mbox{and} \quad \Psi^\ep=0   \label{EPPR6bc}
\nabla \ln (n_j^\ep  e^{\Psi^\ep z_j } ) \cdot \nu = 0 \; \mbox{ on } \, \partial G_f^\varepsilon(\omega).\label{EPPR5}
  \end{gather}

System (\ref{EPPR1})-(\ref{EPPR5}) is the dimensionless  model
that we will homogenize in the sequel. {%\color{blue}
More precisely, we will undertake study of its O'Brien's semi-linearized form.}  We assume that all
constants appearing in (\ref{EPPR1})-(\ref{EPPR5}) are independent of $\ep$,
namely $N_\sigma$ and $\pej$ are of order 1 with respect to $\ep$.

For the comfort of the reader, we recall meaning of the unknowns (which are dimensionless in our equations) on Table \ref{Data}.

\begin{table}[ht]
\footnotesize
\centerline{\begin{tabular}{|l|l|l|} \hline\hline
\emph{SYMBOL} & \emph{ QUANTITY }
 \\
 \hline  $\pej$ & is the P\'eclet number for $j$th electrolyte component \\
\hline    $n_i$   & $i$th  concentration     \\
\hline    $n_i^c \in (0,1)$   & $i$th  infinite dilution concentration     \\
\hline    $\mathbf{u}$  & fluid velocity   \\
\hline    $p$  & fluid pressure   \\
\hline   $\ell$   & pore size     \\
\hline    $\lambda_D$  & Debye's length     \\
\hline     $\beta =\displaystyle (\ell / \lambda_D)^2$ & ratio of the pore scale to the Debye's length \\
\hline     $N_\sigma$  & ratio of the pore scale to the Gouy length \\
\hline     $z_j \in \mathbb{Z} $   &  $j$-th electrolyte valence    \\
\hline    $\sigma$   & surface charge density     \\
\hline    $\mathbf{f}^*$  &  given applied force  \\
\hline    $\Psi$   &  electrokinetic potential  \\
\hline    $\mathbf{E}=\nabla \Psi^{ext}$ & exterior electrical field generated by the exterior potential $\Psi^{ext}$\\
\hline
\end{tabular}
} \caption{{\it Description of the parameters and the unknowns}\label{Data}}
\end{table}

%{\color{blue}
In Section \ref{Poiss}, we recall in subsection \ref{2.1} the basic results on the stochastic two-scale convergence technique, which will be used to prove convergence of the homogenization process in random geometry. In subsection \ref{2.2} their partial linearization in the spirit of the seminal work of O'Brien \& White \cite{OBW:78} is given. The section is completed with subsection \ref{PerioPB}, where the well-posedness of the non-linear Poisson-Boltzmann equation in a random geometry is studied (see Theorem \ref{Limiteqequilib}). In Section \ref{Passlimit}, we undertake stochastic homogenization of the linearized electrokinetic equations around equilibrium (Theorem \ref{1.15}). The scale separation and derivation of Onsager's relations, linking the ionic current, filtration velocity and ionic fluxes with gradients of the electrical potential, pressure and ionic concentrations, are the subject of Section \ref{sec4}. In Proposition \ref{prop.eff} we prove that the full homogenized Onsager tensor is symmetric positive definite for disperse random structures. The article is concluded with Section \ref{StrongS}, where in Theorem \ref{thmstrong} we prove strong convergence of the velocities and the ionic fluxes.
%}

\section{Description of the problem and of the results}\label{Poiss}

In this section we give a precise formulation of our microscopic problem (the
$\varepsilon$-problem) and of the results.

We start with defining the geometrical structure

\subsection{The stochastic two-scale convergence in mean and a random porous medium}\label{2.1}

Let $(\Omega,\,\Xi,\,\mu)$  denote a probability space, with probability measure $\mu$ and sigma-algebra $\Xi$. %{\color{blue}
In what follows $L^2(\Omega)$ is supposed to be separable.
%}
We assume that
an ergodic dynamical system $\cal T$ with n-dimensional time
is given on $\Omega$, i.e. a family of
invertible measurable maps ${\cal T}(x)\,:\,\Omega\to\Omega$, for each $x\in\mathbb{R}^n$,
such that
\begin{enumerate}
\item ${\cal T}(0) \hbox{ is the identity map on } \Omega \hbox{\ and\ } {\cal T}(x_1+x_2)=
{\cal T}(x_1 ){\cal T}(x_2 )$\ for all $x_1,x_2\in\mathbb{R}^n$;
\item $\forall\, x\in\mathbb{R}^n$ and $\forall\, E\in\Xi$,
$$\mu({\cal T}(x)^{-1}(E)) = \mu(E), \quad \mbox{i.e. $\mu$ is an invariant measure for } \, \mathcal{T} \,
(\mbox{endomorphism property}).$$
\item $\forall\, E\in\Xi$\ the set\ $\{ (x,\omega) \in\mathbb{R}^n
\times\Omega\,:\,{\cal T}(x)\omega\in E\}$
is an element of the sigma-algebra ${\cal L}\times\Xi$\ on\
$\mathbb{R}^n\times\Omega$, where ${\cal L}$
is the usual Lebesgue $\sigma$-algebra on $\mathbb{R}^n$.
\item %{\color{blue}
${\cal T}$ is {\it ergodic}, i.e. any set $E \in\Xi$ such that
$\displaystyle \mu \big( ({\cal T} (x) E \cup E) \setminus ({\cal T} (x) E \cap E) \big) =0, $ $\forall x\in \mathbb{R}^n$ satisfies either $\mu (E)=0$ or $\mu (E)=1.$
%}
\end{enumerate}
With the measurable dynamics introduced above
we associate a $n$-parameters group of unitary operators on
$L^2(\Omega) \equiv L^2(\Omega,\Xi,\mu )$, as follows
$$
(U(x)f)(\omega)=f({\cal T}(x)\omega),\qquad f\in L^1(\Omega).
$$
The map $x\to U(x)$ is continuous in the strong operator topology, see for instance Jikov et al \cite{JKO94}.

%We suppose that $L^2(\Omega)$ is separable and that the dynamical system
%$\{{\cal T}(x)\}$ is ergodic.

%
Next, let $\mathcal{F}\in\Xi$ be
such that $\mu(\mathcal{F})>0$ and $\mu(\Omega\setminus\mathcal{F})>0$. Starting from the set $\mathcal{F}$, we
 define a random pore structure
$F(\omega)\subset\mathbb{R}^n$, $\omega\in\Omega$. \ It is obtained from $\mathcal{F}$
by setting
\begin{equation}\label{f_set}
F(\omega) = \{x\in\mathbb{R}^n\, :\, {\cal T}(x)\omega\in \mathcal{F}\}.
\end{equation}
%
%In what follows we suppose that $\mathbb{R}^n\setminus M(\omega)$ is connected a.s.
%(for almost all $\omega\in\Omega$).
In what follows we suppose that $F(\omega)$ is
open and connected a.s. (for almost all $\omega\in\Omega$). In a complementary way, the random rigid skeleton structure $M(\omega)$ is introduced
  by setting
% added August 16, 2000
\begin{equation}\label{m_set}
{\cal M}=\Omega\setminus\mathcal{F},\qquad
M(\omega)=\mathbb{R}^n\setminus F(\omega).
\end{equation}
We assume that a.s. $ M(\omega)$
%\footnote{ANDREY: Originally I wrote that F was disperse. But it was obviously a bad misprint. }  \
is a disperse medium, i.e. a union of mutually disjoint components, called grains, satisfying  the following conditions:
\begin{itemize}
  \item[\bf R1.]  {A.s. $M(\omega)$ is a union of non-intersecting $C^2$-smooth bounded domains.}
  \item[\bf R2.]  The curvature of the boundary of these domains admits a deterministic upper bound.
  \item[\bf R3.]   The distance between any two domains is greater than a positive deterministic constant.
\item[\bf R4.]   The diameter of  any domain is not greater than a positive deterministic constant.
 \item[\bf R5.]   There exists $r_0>0$ such that a.s. any ball of radius $r_0$ in $\mathbb R^n$ has a nontrivial intersection with $M(\omega)$.
%  In order to obtain Poincar\'e's inequality for a random disperse medium, we need} to reinforce assumption {\bf R3.} and to replace it by
%\begin{itemize}
%  \item[\bf R3A.]   The distance between any two domains is greater than a positive deterministic constant and the distance between two closest domains is always smaller than a positive deterministic constant.
\end{itemize}
Under condition {\bf R1.} the connected components of $M(\omega)$ can be enumerated, so that
$$
M(\omega)=\bigcup_{j=1}^\infty M_j(\omega).
$$
%The random matrix blocks set $M(\omega)$ is constructed in
%a complementary way by letting
% added August 16, 2000
%\begin{equation}\label{m_set}
%{\cal M}=\Omega\setminus\mathcal{F},\qquad
%M(\omega)=\mathbb{R}^n\setminus F(\omega)
%\end{equation}
In connection with the random set $M(\omega)$ we introduce a homothetic
structure $M_\varepsilon(\omega)$, $\omega\in\Omega$, by
\begin{equation}\label{matrix}
M_\varepsilon(\omega)=\{x\in\mathbb{R}^n\,:\,\varepsilon^{-1}x\in
M(\omega)\};
\end{equation}
For more details on the homogenization of linear PDEs in perforated random domains we refer to the monograph Jikov et al \cite{JKO94}.

\medskip
Here we provide several examples of random disperse media.\\[2mm]
$\boldsymbol{\mathcal{E}1}.$  {\sl Random perturbation of periodic structure}.
Let $\zeta^j$, $j\in\mathbb Z^n$, be a collection of  independent and identically distributed (i.i.d.) random vectors
in $\mathbb R^n$ taking on values in $[ -\frac14,\frac14]^n$. Denote by $r^j$,
$j\in\mathbb Z^n$, a collection of i.i.d. random variables such that $\frac13\leq r^j\leq \frac23$, and by $\eta$ a random vector
that is uniformly distributed on the cube $[-\frac12,\frac12]^n$ and independent  with respect to $\zeta^j$ and $r^j$.  Taking for $M(\omega)$
the union of balls centered at $j+\zeta^j+\eta$ of radius $r^j$ we obtain an example of  random statistically homogeneous disperse medium. In this case $\mathcal{M}=\{\omega\,:\, 0\in M(\omega)\}$. \\[1.5mm]
$\boldsymbol{\mathcal{E}2}.$  {\sl Bernoulli spherical structure}.   Consider a collection of i.i.d. random variables $\xi^j$,
$j\in\mathbb Z^d$,  with values $0$ and $1$, and an independent vector $\eta$ uniformly distributed on  $[-\frac12,\frac12]^n$.
We say that a vertex $j\in\mathbb Z^n$ is open if $\xi^j=1$, and closed if  $\xi^j=0$. We then define $M(\omega)$  by %as the union of balls of radius $\frac12$ centered at all open vertices shifted by $\eta$:
$$
M(\omega)=\bigcup\limits_{j\hbox{ is open}}\big\{x\in\mathbb R^n\,:\,|x-j-\eta|\leq\frac12\big\}\,\cup
\bigcup\limits_{j\hbox{ is closed}}\big\{x\in\mathbb R^n\,:\,|x-j-\eta|\leq\frac14\big\}.
$$
Then we set $\mathcal{M}=\{\omega\,:\, 0\in M(\omega)\}$.
\\[1.5mm]
$\boldsymbol{\mathcal{E}3}.$  {\sl Poisson spherical structure}.  Let $\mathcal{P}$ be a Poisson process in $\mathbb R^n$ with
intensity one, %{\color{blue}
the averaged density of points is equal to one.
%}.
By definition $\mathcal{P}$ is a random locally finite subset of $\mathbb R^n$ such that for any bounded Borel set
$G\subset\mathbb R^n$ the number of points in  $\mathcal{P}\cap G$ has a Poisson distribution with intensity $|G|$, and
for any finite collection of  bounded disjoint Borel sets $G_1,\ldots,G_N$ the random variables defined as the number of points of
$\mathcal{P}$ in each of these sets are independent.  \\
Denote the points of $\mathcal{P}$ by $z_k$, and the cells of the Voronoi tessellation generated by $\{z_k\}$ by $\Xi_k$.
Given $r>0$ we choose those $z_k$ for which the ball of radius $r$ centered at $z_k$ belongs to $\Xi_k$, and denote the union of such balls by $M(\omega)$.  Again, $\mathcal{M}=\{\omega\,:\, 0\in M(\omega)\}$.

\medskip
By construction the random domains $M(\omega)$ defined in the first two examples are stationary and satisfy conditions
{[\bf R1.]--[\bf R5.]} that is in both examples $M(\cdot)$ is a random disperse medium.   To analyse the ergodic properties
of these media we consider for the sake of definiteness the second example and introduce a probability space by setting $\Omega=\{0,1\}^{\mathbb Z^n}$ and taking the cylindrical $\sigma$-algebra $\Xi$ and the product measure $\mu$.
It suffices to check the ergodicity for the integer shifts only.
We then define $\mathcal{T}$ by $\mathcal{T}_z\omega(\cdot)=\omega(\cdot+z)$, $z\in\mathbb Z^n$.
 It is straightforward to show that this dynamical system is ergodic.  Indeed, if there exists a measurable invariant set
 $A$ such that $\mathcal{T}_z(A)=A$, then for any $\delta>0$ there exists a set $A_\delta$ supported by a finite number of $z\in\mathbb Z^n$ such that  $\mu(A\boldsymbol{\scriptstyle \Delta} A_\delta)\leq\delta$; here $\boldsymbol{\scriptstyle \Delta}$ stands for the symmetric difference. Since $\mathcal{T}$ preserves measure $\mu$,
$\mu(A\boldsymbol{\scriptstyle \Delta} \mathcal{T}_z(A_\delta))\leq\delta$ for any $z\in\mathbb Z^n$.   For sufficiently large $z_0$ we have
$\mu(A\cap \mathcal{T}_{z_0}(A))=\mu(A)$ and
$\mu{(A_\delta\cap\mathcal{T}_{z_0}(A_\delta))}=(\mu(A_\delta))^2$. Therefore,  $\mu(A)$ is equal to either $0$ or $1$.\\
 The ergodicity of the medium in the first example can be checked in the same way.

The random medium introduced in the third example is stationary, ergodic and satisfies conditions {[\bf R1.]--[\bf R4.]};
however, condition {[\bf R5.]} fails to hold. Filling large empty areas with a regular grid of balls one can rearrange this random medium to make condition  {[\bf R5.]} also hold.

\medskip
Notice that in all the above examples the balls can be replaced with smooth bounded domains whose geometry might be
random and should satisfy a number of natural conditions. %We leave the details to the reader.
%{\color{blue}
 For more examples see Bourgeat et al \cite{BAP03}.
 %}

\medskip
Let $G$ be a smooth bounded domain in $\mathbb{R}^n$. After having
chosen the random structure in $\mathbb{R}^n$, we set
%%\ (to be precised)} \
%
\begin{equation}\label{eps_dom}
G_1^\varepsilon=\{x\in G\,:\,{\rm dist}(x,\partial G)
% correction August 16, 2000 \le
\geq \varepsilon\}.
\end{equation}
%Now it is possible to introduce
The random fluid filled pore
system in $G$ is given by
\begin{equation}\label{perf_dom}
G_f^\varepsilon(\omega) = G\setminus(\overline{\bigcup_{j\in \mathcal{J}(\varepsilon)}\varepsilon M_j(\omega)}),
\end{equation}
where
$$
\mathcal{J}(\varepsilon)=\{j\in \mathbb Z^+\,:\, \varepsilon M_j(\omega)\subset G_1^\varepsilon\}.
$$
Then, the random rigid solid skeleton part of $G$ is defined as the complement of
$G_f^\varepsilon(\omega)$ in $G$:
\begin{equation}\label{matr_dom}
G_m^\varepsilon(\omega) = G\setminus\overline{G_f^\varepsilon(\omega)}.
\end{equation}

\medskip

Before giving the convergence results, we recall the definition and some
properties of the stochastic two-scale convergence in the mean
(see Bourgeat et al \cite{BMW} for more details).

Let $D_j$ denote the infinitesimal generator in $L^2(\Omega)$ of
the one-parameter
group of translations in $x_j$, with the other coordinates held equal to zero.  ${\cal D}_j$ is its respective
domain of definition
in $L^2(\Omega)$, i.e. for $f\in{\cal D}_j$
\begin{equation}\label{gener}
(D_j f)(\omega) = \frac{\partial}{\partial x_j } (U(x)f)
(\omega)\vert_{x=0}
\end{equation}
Then $\{\sqrt{-1}\,D_j,\ j=1,\dots,n\}$ are closed, densely--defined
and self--adjoint operators which commute pairwise on ${\cal D}(\Omega)=
\bigcap\limits_{j=1}^n {\cal D}_j$.
${\cal D}(\Omega)$ is a Hilbert space with respect to the scalar product
\begin{equation}\label{in_prod}
(f,g)_{{\cal D}(\Omega)} = (f,g)_{L^2(\Omega)} +
\sum\limits_{j=1}^n (D_j f,D_j g)_{L^2(\Omega)}
\end{equation}

After (\ref{gener}),  the stochastic gradient
$\{\nabla_\omega f\}$, divergence
$\{{\rm div}_\omega f\}$ and curl $\{{\rm curl}_\omega f\}$,
read as follows
\begin{equation}\label{stoch_obj}
\left\{ \begin{array}{ll}
\nabla_\omega f & = (D_1 f,\dots,D_n f)
\\
\displaystyle
{\rm div}_\omega g & = \sum\limits_j D_j g_j
\\
{\rm curl}_\omega g & = (D_i g_j-D_j g_i),\ i\neq j, \ i,j=1, \dots, n
\end{array} \right.
\end{equation}
In addition, we  use the following spaces:
\begin{equation}\label{l2_pot}
{\cal V}^2_{\rm pot}(\Omega) = \{f\in L^2_{\rm pot}(\Omega),\,\eit\{f\}
= 0\}
\end{equation}
\begin{equation}\label{l2_sol}
{\cal V}^2_{\rm sol}(\Omega) = \{f\in L^2_{\rm sol}(\Omega),\,\eit\{f\}
= 0\}
\end{equation}
where $L^2_{\rm pot}(\Omega)$ (respectively $L^2_{\rm
sol}(\Omega))$ is the set of all $f\in(L^2(\Omega))^n$ such that
almost all realizations $f({\cal T} (x)\omega)$ are potential
(resp. solenoidal) in $\mathbb{R}^n$; for more details see
Jikov et al \cite {JKO94}. %{\color{blue}
Notice that $\displaystyle L^2_{\rm
sol}(\Omega))= L^2_{\rm pot}(\Omega)^{\bot}$.
%}

%
\begin{remark}\label{rem_3}
In the case of disperse media the
functions from ${\cal D}(\Omega)$ vanishing on ${\cal M}$ are
dense in $L^2({\cal F})$, see \cite[Remark 2.4]{BAP03}. %{\color{blue}
In the terminology of \cite{wright}, $\mathcal{F}$ is called $\mathcal{T}$-open in $\Omega$. Furthermore, due to connectivity of $F(\omega)$,
%}
$\nabla_\omega \psi =0$ in $\mathcal{F}$ implies that $\psi$ does not depend on $\omega$ in $\mathcal{F}$. Hypothesis {\bf R1.-R3.} are sufficient to make the assumptions on $\mathcal{F}$  in Proposition \ref{twosccompact} below satisfied.
\end{remark}

Next, following \cite{BMW}, we say that an element $\psi\in L^2(G\times\Omega)$ is
admissible if the function
$$\psi_{\cal T}\,:\,(x,\omega) \longrightarrow \psi(x,{\cal T}(x)\omega),
\qquad (x,\omega)\in G\times\Omega,
$$
defines an element of $L^2(G\times\Omega)$.

\noindent
%{\color{blue}
Examples
 of admissible two-scale functions are elements from
$C(\overline{G}\,;\,L^\infty(\Omega))$, $L^2(G\,;\,B(\Omega))$
and finite linear combination of functions of the form
$$
(x,\omega)\longrightarrow f(x)g(\omega),\quad (x,\omega)\in
G\times\Omega,\ f\in L^2(G),\ g\in L^2(\Omega),
$$
(see Bourgeat et al \cite{BMW}).
%}

The notion of stochastic two--scale convergence
in the mean was introduced in Bourgeat et al  \cite{BMW}. It generalizes the two-scale convergence in the periodic setting introduced by Nguetseng in \cite{NGU} and Allaire in \cite{All92}. We recall it for comfort of the reader

\begin{definition}
A bounded sequence $\{ u^\varepsilon\}$ of functions from $L^2(G\times\Omega)$
is said to converge stochastically two-scale in the mean (s.2-s.m.) towards
$u\in L^2(G\times\Omega)$
if for any admissible $\psi\in L^2(G\times\Omega)$ we have
\begin{equation}\label{def_2s}
\lim\limits_{\varepsilon\to 0}\,\int\limits_{G\times\Omega}
u^\varepsilon(x,\omega )\psi(x,{\cal T}(\frac{x}{\varepsilon})\omega)dxd\mu
=\int\limits_{G\times\Omega} u(x,\omega)\psi(x,\omega)dxd\mu.
\end{equation}
\end{definition}

%After obtaining the a priori estimates %(\ref{l_infty})--(\ref{l2_m})
%a possible way to proceed could be to extend
%$\rho^\varepsilon$
%the sequences under consideration from , and
%to use the functions obtained to pass to the limit, as
%$\varepsilon\to 0$, by virtue of Theorem 3.7 from Bourgeat, Mikeli\'c,
%Wright \cite{BMW}.

%In the periodic case such an extension of $\rho^\varepsilon$ is always
%possible if the blocks have Lipschitzian boundary. In the random case
%the sufficient conditions for extension
%in disperse media are given in (Jikov, Kozlov, Oleinik \cite{JKO94}).
%%

\noindent
%{\color{blue}
Our functions are defined on $G_f^\varepsilon(\omega)$ and not on  $G$. It is the well-known complication appearing in homogenization of   Neumann
problem in perforated domains. Motivated by  Jikov et al \cite{JKO94} and Bourgeat et al \cite{BAP98}, \cite[Proposition 2.2]{BAP03} we have the following $2-$scale compactness result.
%}
%Fasano, Mikeli\'c, Primicerio \cite{FMP}.
% It was already applied to  double porosity models in random porous media in \cite{BAP98} and \cite{BAP03}.
%First, we state and prove the following convergence result, which is closely related to Proposition 2.2 from \cite{BAP03}:

\begin{proposition}\label{twosccompact}
Let $\{ \Phi^\varepsilon\}\subset H^1(G)$ and $\{ \Psi^\varepsilon\}\subset H^1(G)$ be such  sequences that
\begin{gather}
\left\{\begin{array}{ll}
\Vert \Phi^\varepsilon\Vert_{L^2(G)} +
\Vert\nabla \Phi^\varepsilon\Vert_{L^2(G^\varepsilon_f(\omega))}  \leq C, \label{est_glob1} \\[2mm]
\Vert \Psi^\varepsilon\Vert_{L^2(G)} +\varepsilon
\Vert\nabla \Psi^\varepsilon\Vert_{L^2(G^\varepsilon_f(\omega))}  \leq  {C},
\end{array} \right.
\end{gather}
{
and assume that assumptions {\bf R1.}--{\bf R4.} are fulfilled.
%
%Suppose that
%As was shown in \cite{JKO94} and \cite{ZhPi}, under the set of all functions $\psi\in{\cal D}(\Omega)$
%such that $\psi=0$ on ${\cal M}$, is dense in $L^2(\Omega\setminus
%{\cal M})$, and whenever $\nabla_\omega \psi =0$ in $\mathcal{F}$, then $\psi$ does not depend on $\omega$ in %$\mathcal{F}$.
%%%{\color{blue} In \cite{wright} $\mathcal{F}$ with  this property is called $\mathcal{T}$-connected.  }

%Let $X$ be the closure of the space ${\cal V}^2_{\rm pot}(\Omega)$
%in $L^2(\Omega\setminus{\cal M})^n$. Suppose, furthermore, that
%the tensor ${\cal A}_N^0$ associated to the homogenized Neumann
%problem and defined by
%%
%\begin{equation}\label{N_matr}
%\xi\cdot{\cal A}^0_N\xi = \inf\limits_{v\in X}\int\limits_{\Omega\setminus
%{\cal M}}|\xi+v|^2 d\mu,\qquad \xi\in\rit^n,
%\end{equation}
%
%is positive definite.
Then there exist functions $\Phi_0 \in H^1(G)$, $\Psi_1 \in L^2(G;{\cal D}(\Omega))$,
$\Psi_1=0$ on $\mathcal{M}$,
and $\Phi_1\in L^2(G;\,X)$, $\Phi_1=0$ on ${\cal M}$,
such that, up to a subsequence,}
\begin{eqnarray}\label{two_s1}
\chi_{G_f^\varepsilon(\omega)} \Phi^\varepsilon&\mathop{\longrightarrow}\limits^{\hbox{s.2-s.m.}}&
 \chi_{\mathcal{F}} (\omega) \Phi_0 (x),
\\
\label{two_s12}
\chi_{G_f^\varepsilon(\omega)} \Psi^\varepsilon&\mathop{\longrightarrow}\limits^{\hbox{s.2-s.m.}}&
 \Psi_1 (x, \omega),
\\
\label{two_s2}
\chi_{G_f^\varepsilon(\omega)}\nabla \Phi^\varepsilon
&\mathop{\longrightarrow}\limits^{\hbox{s.2-s.m.}}&
\chi_{\Omega\setminus{\cal M}}[\nabla_x \Phi_0 (x)+\Phi_1(x,\omega)]
\\
\label{two_s3}
\varepsilon\chi_{G_f^\varepsilon(\omega)}\nabla \Psi^\varepsilon
&\mathop{\longrightarrow}\limits^{\hbox{s.2-s.m.}}&
\chi_{\mathcal{F}}(\omega)\nabla_\omega \Psi_1 (x,\omega)
\end{eqnarray}
\end{proposition}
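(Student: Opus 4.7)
My approach has three main ingredients: a uniform $H^1$-extension operator that handles the first sequence, the stochastic two-scale compactness theorem of Bourgeat--Mikelic--Wright for the bulk part, and an integration-by-parts argument against test functions vanishing on $\mathcal{M}$ to identify the limit of $\varepsilon\nabla\Psi^\ep$.

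\emph{Step 1: the sequence $\Phi^\ep$.} Under the disperse conditions \textbf{R1.}--\textbf{R4.}, each rescaled grain $\ep M_j(\omega)$ meeting $G$ is a $C^2$ domain whose curvature, diameter, and distance to its neighbours are all comparable to $\ep$ after rescaling by $\ep^{-1}$ to the reference size. One can therefore build, grain by grain, a local linear extension (for instance by reflection in a tubular neighbourhood) whose operator norm from $H^1$ of a neighbourhood of the fluid side into $H^1$ of the grain is bounded by a deterministic constant. Assembling these local extensions produces an operator $E_\ep:H^1(G_f^\ep(\omega))\to H^1(G)$ with $\|E_\ep v\|_{H^1(G)}\le C\|v\|_{H^1(G_f^\ep(\omega))}$ almost surely. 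The extended sequence $\widetilde\Phi^\ep=E_\ep\Phi^\ep$ is bounded in $H^1(G)$, hence converges weakly in $H^1(G)$ and strongly in $L^2(G)$, along a subsequence, to some $\Phi_0\in H^1(G)$. Applying the s.2-s.m.\ compactness theorem of \cite{BMW} to the bounded sequence $\nabla\widetilde\Phi^\ep$ in $L^2(G)^n$, and using that the s.2-s.m.\ limit of a gradient is of the form $\nabla_x\Phi_0+v$ with $v(x,\cdot)\in L^2_{\rm pot}(\Omega)$, $\EE\{v\}=0$, yields the limit. Multiplying by the characteristic function and using $\chi_{G_f^\ep(\omega)}(\cdot)=\chi_{\mathcal F}({\cal T}(\cdot/\ep)\omega)$ up to an $\ep$-boundary layer, one gets (\ref{two_s1}) from the strong $L^2$ convergence of $\Phi^\ep$ and (\ref{two_s2}) with $\Phi_1=\chi_{\Omega\setminus\mathcal M}v$.

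\emph{Step 2: the sequence $\Psi^\ep$.} No uniform extension is available here because $\nabla\Psi^\ep$ is only bounded after multiplication by $\ep$; the correct compactness is direct s.2-s.m.\ compactness. The bounded sequences $\chi_{G_f^\ep(\omega)}\Psi^\ep$ in $L^2(G\times\Omega)$ and $\ep\chi_{G_f^\ep(\omega)}\nabla\Psi^\ep$ in $L^2(G\times\Omega)^n$ admit, along a common subsequence, s.2-s.m.\ limits $\Psi_1(x,\omega)$ and $\Theta(x,\omega)$, both supported on $\mathcal F$ (by the identification of $\chi_{G_f^\ep(\omega)}$ with $\chi_{\mathcal F}({\cal T}(x/\ep)\omega)$). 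To identify $\Theta=\nabla_\omega\Psi_1$, I test against functions $\varphi(x)\zeta(\omega)$ with $\varphi\in C_c^1(G)$ and $\zeta\in\mathcal D(\Omega)$ vanishing on $\mathcal M$; by Remark~\ref{rem_3} such $\zeta$ are dense in $L^2(\mathcal F)$. Since $\zeta({\cal T}(x/\ep)\omega)$ vanishes on $M_\ep(\omega)$, the boundary terms in integration by parts on $G_f^\ep(\omega)$ disappear, giving
\begin{equation*}
\int_{G\times\Omega}\ep\,\chi_{G_f^\ep}\nabla\Psi^\ep\cdot\varphi(x)\zeta({\cal T}(\tfrac{x}{\ep})\omega)\,dx\,d\mu
=-\int_{G\times\Omega}\chi_{G_f^\ep}\Psi^\ep\Bigl[\ep(\nabla\varphi)\zeta+\varphi(\nabla_\omega\zeta)({\cal T}(\tfrac{x}{\ep})\omega)\Bigr]dx\,d\mu.
\end{equation*}
Passing to the s.2-s.m.\ limit (the term with the factor $\ep$ vanishes), I obtain
$\int_{G\times\Omega}\Theta\cdot\varphi\zeta\,dx\,d\mu=-\int_{G\times\Omega}\Psi_1\,\varphi\,\div_\omega\zeta\,dx\,d\mu$, whence $\Theta=\chi_{\mathcal F}\nabla_\omega\Psi_1$ in the distributional sense on $\Omega$, proving (\ref{two_s3}). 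The $L^2$ bound on $\nabla_\omega\Psi_1$ then places $\Psi_1$ in $L^2(G;\mathcal D(\Omega))$, with $\Psi_1=0$ on $\mathcal M$.

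\emph{Main obstacle.} The delicate point is the construction of the $\ep$-uniform $H^1$-extension operator in Step 1, which rests critically on \textbf{R2.}--\textbf{R3.}: the uniform curvature bound ensures uniformly good local extension constants on each grain, while the uniform separation between grains prevents interference when gluing local extensions. Without the disperse structure this would fail. A secondary subtle point is the use of test functions vanishing on $\mathcal M$ in Step 2: their density in $L^2(\mathcal F)$, guaranteed under \textbf{R1.}--\textbf{R3.} via the $\mathcal T$-openness of $\mathcal F$ (Remark~\ref{rem_3}), is what makes the integration-by-parts identification of $\nabla_\omega\Psi_1$ rigorous and independent of any extension across $\mathcal M$.
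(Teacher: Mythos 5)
Your proposal is correct, and its skeleton --- the Bourgeat--Mikeli\'c--Wright stochastic two-scale compactness theorem followed by identification of the limits, with the duality/integration-by-parts argument against test functions vanishing on $\mathcal{M}$ (dense in $L^2(\mathcal{F})$ by Remark \ref{rem_3}) used to obtain $\varepsilon\chi_{G_f^\varepsilon(\omega)}\nabla\Psi^\varepsilon \to \chi_{\mathcal{F}}\nabla_\omega\Psi_1$ --- is the same as the paper's. The one place you genuinely diverge is the treatment of $\Phi^\varepsilon$: you route it through an $\varepsilon$-uniform $H^1$-extension operator built grain by grain from \textbf{R1.}--\textbf{R4.}, get $\Phi_0\in H^1(G)$ and strong $L^2(G)$ convergence by Rellich, and then apply the structure theorem for two-scale limits of gradients of an $H^1(G)$-bounded sequence. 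The paper instead stays on the perforated domain and cites \cite[Proposition 2.1]{BAP03} for the identification $\xi_0=0$ on $G\times\mathcal{M}$ and $\xi_0-\nabla_x\Phi_0\in L^2(G;X)$, which is proved there intrinsically by duality against solenoidal test fields vanishing on $\mathcal{M}$, so no extension is needed in this proposition --- although the paper does assert the existence of exactly your extension operator under \textbf{R1.}--\textbf{R4.} in Section \ref{Passlimit}, so your construction is consistent with the paper's standing assumptions. Your route buys strong $L^2$ convergence and a self-contained identification of $\Phi_0$ and $\Phi_1$; the paper's route avoids the extension construction at the price of leaning on the cited lemma. Two cosmetic points: in your displayed identity the test function $\zeta$ must be vector-valued (so the divergence $\div_\omega\zeta$, not $\nabla_\omega\zeta$, appears on the right), and the vanishing of the boundary terms uses that the grains retained in $G_f^\varepsilon(\omega)$ are exactly the whole inclusions $\varepsilon M_j(\omega)$, $j\in\mathcal{J}(\varepsilon)$, on whose boundaries the realization of $\zeta$ has zero trace; neither affects the validity of the argument.
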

\begin{proof}
{
As it was shown in \cite{JKO94},  \cite{wright} and \cite{ZhPi}, under assumptions {\bf R1.}--{\bf R4.} the set of all
functions $\psi\in{\cal D}(\Omega)$
such that $\psi=0$ on ${\cal M}$, is dense in $L^2(\Omega\setminus
{\cal M})$, and whenever $\nabla_\omega \psi =0$ in $\mathcal{F}$, then $\psi$ does not depend on $\omega$ in $\mathcal{F}$.
}
%%%{\color{blue} In \cite{wright} $\mathcal{F}$ with  this property is called $\mathcal{T}$-connected.  }

Let $X$ be the closure of the space ${\cal V}^2_{\rm pot}(\Omega)$
in $L^2(\Omega\setminus{\cal M})^n$. Under  that same assumptions  {\bf R1.}--{\bf R4.}
the tensor ${\cal A}_N^0$ associated to the homogenized Neumann
problem and defined by
\begin{equation}\label{N_matr}
\xi\cdot{\cal A}^0_N\xi = \inf\limits_{v\in X}\int\limits_{\Omega\setminus
{\cal M}}|\xi+v|^2 d\mu,\qquad \xi\in\rit^n,
\end{equation}
is positive definite.
Using the above a priori estimates and the stochastic two-scale in the mean
compactness theorem
from Bourgeat et al \cite{BMW}, we conclude that, after
taking a proper subsequence, the sequences
$\{\Phi^\varepsilon\}$, $\{\chi_{G_f^\varepsilon(\omega)}\nabla
\Phi^\varepsilon\}$,  $\{\Psi^\varepsilon\}$ and
$\{\varepsilon\chi_{G_f^\varepsilon(\omega)}\nabla \Psi^\varepsilon\}$
have stochastic two-scale limits.
We have then:
\begin{itemize}
\item
$\Phi^\varepsilon
\mathop{\longrightarrow}\limits^{\hbox{s.2-s.m.}}
\Phi_0(x,\omega)$
\item
$\chi_{G_f^\varepsilon(\omega)}\nabla \Phi^\varepsilon
\mathop{\longrightarrow}\limits^{\hbox{s.2-s.m.}}\xi_0(x,\omega)$
\item
$\Psi^\varepsilon
\mathop{\longrightarrow}\limits^{\hbox{s.2-s.m.}}
\Psi_1 (x,\omega)$
\item
$\varepsilon \chi_{G_m^\varepsilon(\omega)}\nabla \Psi^\varepsilon
\mathop{\longrightarrow}\limits^{\hbox{s.2-s.m.}}z_0(x,\omega)$
\end{itemize}
We should find relations between $\Phi_0$, $\xi_0$, $v$ and $z_0$.

Concerning the relation between $\xi_0$ and $\Phi_0$, it was considered in \cite[Proposition 2.1]{BAP03},  and it was proved that
$$ \xi_0 =0 \quad \mbox{on} \quad G\times \mathcal{M}\quad \mbox{and} \quad \xi_0(x,\omega)-\nabla_x \Phi_0(x)\in L^2(G;
X)^n. $$

It remains to identify $z_0$ .

Taking into account the ergodicity of the dynamical system and
connectivity of the fractures, by the same arguments as in
Bourgeat et al \cite{BMW}, Bourgeat et al \cite{BAP03} and  Wright \cite{wright}, we conclude
$$ z_0 (x, \omega)=0 \quad \mbox{on} \quad G\times \mathcal{M}\quad \mbox{and} \quad z_0(x,\omega)=\chi_{\mathcal{F}} \Psi_1 (x, \omega)\in \mathcal{D} (\Omega). $$

\end{proof}
\begin{remark}
It should be noted that ${\cal A}_N^0$ is always positive definite
in the periodic case if the fracture part is connected.
%Sufficient
%conditions for positive-definitness of ${\cal A}_N^0$ in the
%random case are given in Jikov et al \cite{JKO94}. We
%discuss this question in the last section of this paper.
\end{remark}

%\begin{remark}
%\label{assumhyp}

\subsection{Linearization}\label{2.2}

%{\color{blue}
In this subsection we follow the lead of O'Brien \& White \cite{OBW:78} and proceed with semi-linearization of  system (\ref{EPPR1})-(\ref{EPPR5}).  The %applied source terms
%in system (\ref{EPPR1})-(\ref{EPPR5}) arethe
 static electric potential $\Psi^{ext}(x)$ %, defining the electrical field $\mathbf{E}=\nabla \Psi^{ext}(x)$, the surface %charge density $N_\sigma \sigma (x)$ on the pore walls
   and the applied fluid force $\mathbf{f}^*(x)$
   %The fields $\mathbf{f}^*$ and $\Psi^{ext}$
are assumed to be sufficiently small.
%and allow the partial linearization of the ionic transport (electrokinetic) equations.
No smallness condition is imposed on $N_\sigma \sigma$ and the Poisson-Boltzmann equation (\ref{EPPR3b}) remains non-linear.

%Following the calculations by
After O'Brien \& White \cite{OBW:78},
%}
we write the electrokinetic unknowns as
\begin{gather}
n_i^\ep (x) = n^{0,\ep}_i (x) + \delta n_i^\ep (x), \quad  \Psi^\ep (x) = \Psi^{0, \ep} (x) + \delta \Psi^\ep (x), \notag \\
\mathbf{u}^\ep(x) = \mathbf{u}^{0, \ep}(x) + \delta\mathbf{u}^\ep(x), \quad  p^\ep (x) = p^{0, \ep} (x) + \delta p^\ep (x), \notag
\end{gather}
where $n^{0,\ep}_i, \Psi^{0,\ep}, \mathbf{u}^{0,\ep}, p^{0,\ep}$ are the equilibrium quantities, corresponding
to $\mathbf{f}^*=0$ and $\Psi^{ext }=0$. The $\delta$ prefix indicates the size of  perturbation.

%{\color{blue}
At zero order, corresponding to  $\mathbf{f}^*=0$ and $\Psi^{ext}=0$, we search for an equilibrium solution of the form
%}
%one can find a special
%solution of (\ref{EPPR1})-(\ref{EPPR5}) by assuming that the fluid is at rest
%and the flux in equation (\ref{Nernst1}) is zero everywhere.
%It is given by
\begin{gather}
\mathbf{u}^{0, \ep} = 0 \, , \quad
p^{0,\ep} =  \sum_{j=1}^N n_j^c \exp \{-z_j \Psi^{0,\ep} \} \, ,\notag \\
n_j^{0, \ep} (x) = n_j^c \exp \{ -  z_j \Psi^{0, \ep} (x) \} \, , \label{NJ0}
\end{gather}
where $\Psi^{0, \ep }$ solves the Poisson-Boltzmann equation
\begin{equation}
\label{BP1}
\left\{ \begin{array}{l}
       \dsp -\ep^2 \Delta \Psi^{0, \ep } =\beta  \sum_{j=1}^N z_j n_j^c e^{ -  z_j \Psi^{0, \ep }  } \ \mbox{ in } \  G_f^\varepsilon(\omega) , \\
       \dsp  \ep \nabla \Psi^{0, \ep } \cdot \nu = -N_\sigma \sigma \ \mbox{ on } \,  \partial G_f^\varepsilon(\omega)\setminus \p G, \quad \Psi^{0, \ep }  =0 \ \mbox{ on } \,  \partial  G.
     \end{array}
   \right.
\end{equation}
%The constants $n_j^c>0$ are called the infinite dilution concentrations.
%{\color{blue}
Furthermore,  we assume that all valences $z_j$ are different and
\begin{equation}
\label{valence}
z_1 < z_2 < ... < z_N , \quad z_1<0<z_N .
\end{equation}
We denote by $j^+$ and $j^-$ the sets of positive and negative valencies.
%}

We note that problem (\ref{BP1}) is equivalent to the following minimization problem:
\begin{equation}\label{Minpsi}
   \inf_{\varphi \in W_\ep} J_\ep(\varphi) ,
\end{equation}
with $W_\ep=\{ z\in  H^1 (G_f^\ep (\omega) ) \ | \ z=0 \; \mbox{ on } \; \p G \} $
and
\begin{gather*}
J_\ep(\varphi) = \frac{\ep^2}{2} \int_{ G_f^\varepsilon(\omega)} | \nabla \varphi  |^2 \ dx
+ \beta \sum_{j=1}^N \int_{\partial G_f^\varepsilon(\omega)}  n_j^c   e^{ -z_j \varphi  } \  dx
    + \ep N_\sigma \int_{\partial G_f^\varepsilon(\omega)} \sigma \varphi \ dS .
\end{gather*}
The functional $J_\ep$ is strictly convex, which gives the uniqueness of the minimizer. Nevertheless, for arbitrary non-negative $\beta , n_j^c$ and $N_\sigma$, $J_\ep$ may be {\bf not coercive} on $W_\ep$ if all $z_j$'s have the same sign.
%{\color{blue}
It suffices to take as $\varphi$  constants of the same sign as $z_1$ and $z_N$'s and tending to infinity. %Therefore, we must put a condition on the $z_j$'s so that the minimization problem (\ref{Minpsi}) admits a solution.
  Following the literature, this degeneracy is handled by imposing the {\bf bulk electroneutrality condition}
  %}
\begin{equation}
\label{Neutrality}
   \sum_{j=1}^N z_j n_j^c =0,
\end{equation}
which guarantees that for $\sigma=0$, the unique solution is $\Psi^{0,\ep} =0$.
%Under (\ref{Neutrality}) it is easy to see that $J_\ep$ is coercive on $W_\ep$.

The second difficulty is that $J_\ep$ is not defined on $W_\ep$, but rather on $W_\ep \cap L^\infty (G_f^\ep (\omega))$.
\begin{remark}
The bulk electroneutrality condition (\ref{Neutrality}) is not
a restriction. Actually, all our results hold under
the much weaker assumption that all valences $z_j$
do not have the same sign. We refer to \cite{AllBrizzDufMiPi:12} for the argument how to reduce the general case to
(\ref{Neutrality}).
\end{remark}

\begin{remark}%[]
\label{lem.pb}
Assume that the electroneutrality condition (\ref{Neutrality}) holds true
and $\sigma$  be a smooth bounded function.
Then, in the deterministic setting, it was  proved in Allaire et al \cite{beta} that problem (\ref{Minpsi}) has a unique solution $\Psi^{0,\ep} \in W_\ep\cap L^\infty (G_f^\ep (\omega))$.
\end{remark}

Motivated by  the computation of $n_i^{0, \ep}$, having the form of the Boltzmann equilibrium distribution,  we follow again lead of
\cite{OBW:78} and introduce the so-called ionic potential
$\Phi_i^\ep$ which is defined in terms of $n_i^\ep$ by
\begin{equation}
\label{BP2}
n_i^\ep = n_i^c \exp \{ -  z_i ( \Psi^\ep  + \Phi_i^\ep + \Psi^{ext} ) \} .
\end{equation}
After linearization (\ref{BP2}) leads to
\begin{equation}
\label{BP3}
\delta n_i^\ep (x) = - z_i n_i^{0,\ep} (x) (\delta \Psi^\ep (x) + \Phi_i^\ep (x) + \Psi^{ext}(x) ).
\end{equation}
Introducing (\ref{BP3}) into (\ref{EPPR1})-(\ref{Nernst1}) and linearizing yields
the following system
\begin{gather}
\ep^2 \Delta \mathbf{u}^\ep - \nabla P^\ep = \mathbf{f}^* -
\sum_{j=1}^N z_j n_j^{0,\ep}(x) (\nabla\Phi_j^\ep + \mathbf{E}^* ) \quad \mbox{ in } \;  G_f^\varepsilon(\omega) , \label{LIN2} \\
\div \mathbf{u}^\ep =0 \ \mbox{ in }  \;  G_f^\varepsilon(\omega) , \quad
\mathbf{u}^\ep =0 \ \mbox{ on } \; \partial G_f^\varepsilon(\omega) ,\label{LIN4} \\
\div \left( n^{0, \ep}_j (x) \big( \nabla \Phi_j^\ep + \mathbf{E}^* + \frac{\pej}{z_j} \mathbf{u}^\ep \big)\right) =0 \ \mbox{ in } \;  G_f^\varepsilon(\omega) ,\; j=1, \dots , N,
\label{EPPR4L} \\
%\mathbf{u}^\ep =0 \quad \mbox{and} \quad  (\nabla \Phi_j^\ep + \mathbf{E}^*) \cdot \nu = 0 \; \mbox{ on } \, \partial G %,\label{EPPR6bx}\\
(\nabla \Phi_j^\ep + \mathbf{E}^*) \cdot \nu = 0 \; \mbox{ on } \, \partial G_f^\varepsilon(\omega)\setminus \p G, \quad  \Phi_j^\ep =0  \; \mbox{ on } \, \partial G, \; j=1, \dots , N,\label{EPPR4a}
%\\
%\mathbf{u}^\ep \, , \ P^\ep \, , \ \Phi_j^\ep \; \mbox{ are } 1-\mbox{periodic,}  \label{EPPR5L}
\end{gather}
where the perturbed velocity is actually equal to the overall velocity and, for convenience,
we introduced a global pressure $P^\ep$
\begin{equation}\label{LIN5}
\delta \mathbf{u}^\ep = \mathbf{u}^\ep \, , \quad
P^\ep = \delta p^\ep +  \sum_{j=1}^N
z_j n_j^{0, \ep} \left( \delta \Psi^\ep + \Phi_j^\ep  + \Psi^{ext , *} \right) .
\end{equation}
For the choice of the boundary conditions on $\p G$, we have followed O'Brien \& White \cite{OBW:78}.
It is important to remark that, after the global pressure $P^\ep$ has been introduced,
$\delta \Psi^\ep $ does not enter equations (\ref{LIN2})-(\ref{EPPR4a})
and thus is decoupled from the main unknowns $\mathbf{u}^\ep$, $P^\ep$ and $\Phi_i^\ep$.
%The system (\ref{NJ0}), (\ref{BP1}), (\ref{LIN2})-(\ref{EPPR4a}) is the same microscopic
%linearized system for the ionic transport as
%in the work of
%Looker and Carnie \cite{LC:06}. %, where the case of  porous media with a periodic pore structure was stuied using two-scale %expansions.

\subsection{Poisson-Boltzmann equation in the random geometry}\label{PerioPB}

%
%further assumptions on the random structure will be given in
%Section \ref{secti_3}.

%By the statistical homogeneity   and by uniqueness of the solution $\Psi^{0, \ep}$
%of the Poisson-Boltzmann equation (\ref{BP1}), we have
%\begin{equation}
%\label{BP1per0}
%\Psi^{0, \ep} (x) = \Psi^0 (\mathcal{T}(\frac{x}{\ep})\omega), \quad n_j^{0,\ep}(x) = n_j^0(\mathcal{T}(\frac{x}{\ep})\omega) ,
%\end{equation}
%where
%\begin{equation}
%\label{BP1per000}
%n_j^0(y) = n_j^c \exp\{-z_j \Psi^0(\mathcal{T}(y)\omega )\}
%\end{equation}
%and a.s. $\Psi^0(T(y)\omega)$ is the  solution for
Rescaling of
the Poisson-Boltzmann equation in \eqref{BP1} yields its form valid in $F(\omega):$
\begin{equation}
\label{BP1per}
\left\{ \begin{array}{ll}
\dsp -\Delta_y \Psi^{0} = \beta \sum_{j=1}^N z_j n_j^c e^{ - z_j \Psi^{0} } =- \beta n_H (\Psi^0 )\ \mbox{ in } \  F (\omega), &  \\
\dsp   \nabla_y \Psi^{0} \cdot \nu = -N_\sigma \sigma (\mathcal{T}(y)\omega ) \ \mbox{ on } \, \p F (\omega).&
\end{array} \right.
\end{equation}
%In order to prove solvability of the
Problem (\ref{BP1per}) does not have boundary conditions at infinity.
%{\color{blue}
They are hidden in the statistical  homogeneity of a solution. We recall that in the periodic case, one can search for globally bounded smooth solution in the
space and it turns out that they are necessarily the periodic ones.

{
We will show below  that problem \eqref{BP1per} is well-posed and has a unique solution and that the solution
$\Psi^{0,\ep}$ of problem \eqref{BP1} stochastically two-scale converges as $\ep\to0$ to the solution $\Psi^0(\cdot)$
of \eqref{BP1per}.

Clearly, the function $\Psi^0(\mathcal{T}(x/\ep))$ satisfies a.s. the
{%\color{blue}
Poisson-Boltzmann equation and Neumann's boundary condition } in \eqref{BP1}. {%\color{blue}
Since we are interested in the effective bulk behavior of the potential,  $\Psi^0(\mathcal{T}(x/\ep))$
%its stochastic two-scale limit
is in fact the desired approximation to be used in the concentration coefficients $n_j^{0, \varepsilon}$ of the equations (\ref{LIN2}) and (\ref{EPPR4L}).
}

\vskip5pt

In order to derive $L^\infty$-bounds for problem (\ref{BP1}),
%}
we first handle the non-homogeneous Neumann condition and
study the following %auxiliary Neumann
$\ep$-problem
\begin{equation}
\label{BP1aux}
\left\{ \begin{array}{ll}
\dsp -\Delta_y V^\ep + V^\ep =0\ \mbox{ in } \  G_f (\omega) = \frac{1}{\ep} \ G_f^\varepsilon(\omega), &  \\
\dsp   \nabla_y V^\ep \cdot \nu = - N_\sigma \sigma (y ) \ \mbox{ on } \,  \frac{1}{\ep} \ \p G_m^\varepsilon(\omega)= \bigcup_{j\in \mathcal{J}(\varepsilon)}\p M_j(\omega), &\\
V^\ep =0  \ \mbox{ on } \ \frac{1}{\ep} \ \p G.&
\end{array} \right.
\end{equation}

\begin{proposition}\label{prop7}
Let $\sigma $ be a bounded function such that  $\|\sigma\|_{L^\infty(\partial F(\omega))}\leq C_0$ a.s.
and assume that  conditions {\bf R1.}--{\bf R3.} are fulfilled.
Then a.s. there exist constants $V_m $ and $V_M $, independent of $\ep$, such that
\begin{equation}\label{BoundsV}
  V_m \leq V^\ep (y, \omega)) \leq V_M\quad \mbox{a.e. on} \quad F (\omega).
\end{equation}
\end{proposition}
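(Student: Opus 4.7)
The plan is to obtain the two-sided $L^\infty$ bound via the weak maximum principle for the operator $-\Delta + I$, by constructing bounded super- and sub-solutions as superpositions of local barriers around each grain. The decisive feature is that $-\Delta + I$ has an exponentially decaying Green's function, which together with the geometric hypotheses {\bf R1.}--{\bf R3.} will make the superposition uniformly bounded.

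First I would verify existence and uniqueness of $V^\ep$ by Lax--Milgram applied to the coercive bilinear form $a(u,v)=\int_{G_f(\omega)\cap \frac{1}{\ep}G}(\nabla u\cdot\nabla v + uv)\,dy$ on the closed subspace of $H^1$ of functions vanishing on $\frac{1}{\ep}\p G$; the mass term guarantees coercivity uniformly in $\ep$. For the upper bound, for each grain $M_j(\omega)$ I introduce a local barrier $w_j$ solving the exterior problem
\begin{equation*}
-\Delta w_j + w_j = 0 \quad\mbox{in }\RR^n\setminus\overline{M_j(\omega)},\qquad \p_\nu w_j = K\;\mbox{on}\;\p M_j(\omega),\qquad w_j(y)\to 0\;\mbox{as}\;|y|\to\infty,
\end{equation*}
where $K>0$ is a constant chosen below. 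The weak maximum principle yields $w_j\geq 0$, and standard exterior elliptic regularity, together with {\bf R2.} and {\bf R4.} (uniform curvature and diameter of the grains), produces constants $A,c>0$ independent of $j$ and $\omega$ such that
\begin{equation*}
w_j(y) + |\nabla w_j(y)| \leq A\,e^{-c\,\mbox{\scriptsize dist}(y,M_j(\omega))}.
\end{equation*}

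Setting $W^+(y):=\sum_j w_j(y)$, the separation condition {\bf R3.} implies that the number of grains contained in any ball of radius $R$ grows at most polynomially in $R$, so the exponential decay of each $w_j$ makes the sum pointwise bounded by a deterministic constant $V_M$ that depends only on $n$, $K$, $C_0$, $N_\sigma$ and the geometric constants of {\bf R1.}--{\bf R3.}. By construction $-\Delta W^+ + W^+ = 0$ in $G_f(\omega)$, and on each $\p M_{j_0}$ one has $\p_\nu W^+ = K + \sum_{j\neq j_0}\p_\nu w_j|_{\p M_{j_0}}$; the cross-term is uniformly bounded by the same exponential estimates, so $K$ can be fixed once for all so that $\p_\nu W^+ \geq N_\sigma C_0 \geq -N_\sigma\sigma$ for every $\sigma$ with $\|\sigma\|_\infty\leq C_0$. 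Testing the equation satisfied by $V^\ep - W^+$ with $(V^\ep-W^+)^+$, and using $W^+\geq 0$ on $\frac{1}{\ep}\p G$, delivers $V^\ep \leq V_M$ a.e. A symmetric construction starting from $-w_j$ provides the matching lower barrier, giving $V_m := -V_M \leq V^\ep$.

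The main obstacle will be the uniform control of the cross-term $\sum_{j\neq j_0}\p_\nu w_j|_{\p M_{j_0}}$: conditions {\bf R2.} and {\bf R3.} are both essential here, the former to obtain the decay constants $A$, $c$ uniformly in $j$ and $\omega$ via elliptic regularity up to the boundary, and the latter to sum the exponentially decaying contributions of all other grains without divergence. A secondary technical point is the passage from the local barriers (defined in the whole exterior $\RR^n\setminus\overline{M_j(\omega)}$) to the perforated domain $G_f(\omega)\cap\frac{1}{\ep}G$, which only costs a harmless boundary term near $\frac{1}{\ep}\p G$ that can be absorbed into $V_M$.
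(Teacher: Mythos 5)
Your overall strategy (comparison with an explicit supersolution of $-\Delta+I$ via the weak maximum principle) is the same general mechanism as the paper's, but your construction of the barrier is genuinely different and, as written, has a gap at its decisive step. The paper builds a single \emph{compactly supported} barrier $\hat a_\ep = h(d)$, where $d$ is the distance to $\partial F(\omega)$ and $h$ vanishes for $d\geq k$; conditions {\bf R1.}--{\bf R3.} guarantee (via Gilbarg--Trudinger) that $d$ is $C^2$ on a collar of uniform width, so $|\Delta \hat a_\ep|\leq \hat C$ uniformly, and the defect $-\Delta\hat a_\ep+\hat a_\ep$ is simply absorbed by adding the constant $\hat C$ to the barrier. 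Because that barrier lives in disjoint collars around the grains (by {\bf R3.}), there are no interactions between grains to control. Your barrier $W^+=\sum_j w_j$ instead solves the equation exactly in the pore space but is globally supported, and the whole weight of the argument falls on the Neumann cross-term $\sum_{j\neq j_0}\partial_\nu w_j$ on $\partial M_{j_0}$.

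Here is the gap: each $w_j$ depends linearly on the datum $K$, so your uniform bound on the cross-term is necessarily of the form $K A''$, with $A''$ a purely geometric constant (essentially $\sup_{j_0}\sup_{\partial M_{j_0}}\sum_{j\neq j_0} A\,e^{-c\,\mathrm{dist}(M_{j_0},M_j)}$ computed for $K=1$). The supersolution property then requires $K-KA''\geq N_\sigma C_0$, i.e. $A''<1$; increasing $K$ does not help, because the cross-term grows proportionally. Nothing in {\bf R1.}--{\bf R3.} forces $A''<1$: the minimal separation $\delta$ of {\bf R3.} may be small compared with the decay length of $-\Delta+I$, and a grain may have many near neighbours, in which case $A''\geq1$ and no admissible $K$ exists. (A secondary mismatch: you invoke {\bf R4.} for the uniform decay constants, whereas the proposition assumes only {\bf R1.}--{\bf R3.}) The natural repair is to truncate each $w_j$ at distance $\delta/2$ from its grain, so the supports become disjoint and the cross-term disappears; the truncation destroys the exact equation but produces only a uniformly bounded defect, which is absorbed by adding a constant --- and at that point your construction has essentially turned into the paper's barrier $h(d)+\hat C$.
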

\begin{proof}

  First, we recall that problem  (\ref{BP1aux}) has a unique solution $V^\ep \in H^1 (G_f (\omega)$, $V^\ep =0$ on $\p G/\ep$. We search a $L^\infty$-bound independent of $\ep$.

Under conditions {\bf R1.}--{\bf R3.}, results from Gilbarg \& Trudinger \cite[Appendix 14.6]{GT} yield that the distance $d$ is an element of $C^2 (\Gamma_k)$, with $\Gamma_k =\{ \ x\in F (\omega) \ | \ d(x)=\mbox{dist} (x, \p F (\omega)) \leq 2 k \}$ and $1/k$ bounds the positive curvature of $\p F (\omega)$.

Let $h\in C^2 [0, +\infty)$ be a nonnegative function such that $h(t)=0$ for $t\geq k$ and $\displaystyle h' (0) =N_\sigma ||\sigma ||_{L^\infty (\p F(\omega))} =C_0$. Then there is a constant ${\hat C}$ , independent of $R$, such that function ${\hat a}_\ep =h(d)$ satisfies
$$
0\leq \hat a_\ep\leq \hat C\quad\hbox{in }\ F(\omega),\qquad |\Delta \hat a_\ep |< \hat C\quad
\hbox{in }\ F(\omega),\qquad
\nabla_y \hat a_\ep \cdot \nu =h'(0) = C_0  \ \mbox{ on } \,  \bigcup_{j\in \mathcal{J}(\varepsilon)} \p M_j(\omega).
$$
%with a deterministic constant $\hat C$ that does not depend on $R$. Indeed, since the distance to  $\bigcup_{j\in %\mathcal{J}_R}M_j(\omega)$ is a $C^2$ function in a sufficiently small neighbourhood of this set, one can
%construct the desired function  $\hat a_R$ by taking an appropriate function of the distance. Then we have

Furthermore
$$-\Delta (\hat a_\ep+\hat C) +(\hat a_\ep+\hat C)>0$$
and $\displaystyle \frac{\p }{\p \nu} \hat a_\ep \geq N_\sigma \sigma$.
By the maximum principle
$$
 V^\ep\leq \hat a_\ep+\hat C\leq 2\hat C.
$$
In the same way we can show that  $-2\hat C\leq V^\ep\leq 2\hat C$ and
%Passing to the limit in  \eqref{BP1auxHN}, as $R\to\infty$, we obtain a
 the solution $V^\ep$ of problem \eqref{BP1aux} satisfies  \eqref{BoundsV}.
\end{proof}
\begin{remark} \label{rem8} Let us suppose, in addition, that $\sigma $ is bounded in $C^1 (\p F(\omega))$.  The function $V^\ep$, solving problem \eqref{BP1aux}, is in fact the solution for the variational problem
\begin{gather}
  \mbox{Find} \;  V^\ep \in H^1 (G_f (\omega)),\;  V^\ep =0 \; \mbox{on} \; \frac{1}{\ep} \  \p G \; \mbox{such that} \notag \\
  \hskip-8pt \int_{G_f (\omega)} (\nabla_y V^\ep \cdot \nabla_y \varphi +   V^\ep  \varphi) \ dy = - N_\sigma \bigcup_{j\in \mathcal{J}(\varepsilon)} \int_{\p M_j (\omega)} \sigma \varphi \ dS_y = -N_\sigma \bigcup_{j\in \mathcal{J}(\varepsilon)} \int_{\p M_j (\omega)} \sigma \nabla d \cdot \nu \varphi \ dS_y  =\notag \\
  \hskip-8pt - N_\sigma \int_{ G_f (\omega)} \big( \frac{\sigma}{C_0} h(d) \ \nabla_y d \cdot \nabla_y \varphi + \frac{\sigma}{C_0} h(d) \  \Delta d \ \varphi + \frac{\sigma}{C_0} h'(d) \ |\nabla_y d|^2 \varphi + \frac{\nabla_y \sigma}{C_0} \cdot \nabla_y d \ h(d) \varphi \big) \ dy. \label{Variaaux}
\end{gather}
In addition to estimate \eqref{BoundsV}, we have
\begin{equation}\label{Estvaraux}
  \int_{G_f (\omega)} (|\nabla_y V^\ep |^2 + | V^\ep |^2 )\ dy \leq C | G_f (\omega)|, \quad \mbox{(a.s.)} \; \mbox{ in } \; \omega.
\end{equation}
Estimates (\ref{BoundsV}) and (\ref{Estvaraux}) allow passing to the limit $\ep \to 0$ of the sequence $\{ V^\ep \}$.
%It is possible to prove existence for $\mathbf{X} =\nabla_y V$ using just the usual 'div-curl" stochastic formulation. But %then there would be a trouble with $L^\infty$-bounds. They would be only local. Or I am wrong?
\end{remark}
Having $L^\infty$-bounds for the solution of auxiliary problem (\ref{BP1aux}) allows proving existence of a bounded solution to problem
(\ref{BP1per}).
\vskip1pt
We start with the $\ep$-problem:
\begin{equation}
\label{BP1monaux}
\left\{ \begin{array}{ll}
\dsp -\Delta_y \Psi^{\ep}  =\beta \sum_{j=1}^N z_j n_j^c e^{ - z_j \Psi^{\ep} } =
- \beta n_H (\Psi^\ep )\ \mbox{ in } \  G_f (\omega), &  \\
\dsp   \nabla_y \Psi^\ep \cdot \nu = - N_\sigma \sigma (y ) \ \mbox{ on } \,  \frac{1}{\ep} \ \p G_m^\varepsilon(\omega), &\\
\Psi^\ep =0  \ \mbox{ on } \ \frac{1}{\ep} \ \p G.&
\end{array} \right.
\end{equation}
\begin{theorem} \label{PoissBoleq} Under the
%{\color{blue}
electroneutrality condition (\ref{Neutrality}) and
%}
hypotheses {\bf R1.}--{\bf R4.}, a.s., there exists a unique solution $\Psi^\ep \in H^1 (G_f (\omega)) \cap L^\infty (G_f (\omega))$ of problem (\ref{BP1monaux}), such that
\begin{gather}
 \int_{G_f (\omega)} (|\nabla_y \Psi^\ep |^2 + | \Psi^\ep |^2 )\ dy \leq C | G_f (\omega)|,
\label{Energyep}   \\
  || \Psi^\ep ||_{L^\infty (G_f (\omega))}\leq C, \label{Boundep}
\end{gather}
where $C$ is a deterministic constant, independent of $\ep$.
\end{theorem}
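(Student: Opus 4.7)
My plan is to reduce the nonhomogeneous Neumann condition to a homogeneous one via the shift $W^\ep = \Psi^\ep - V^\ep$, where $V^\ep$ is the auxiliary function from Proposition~\ref{prop7}. Since $\Delta V^\ep = V^\ep$, the new unknown satisfies
\[
-\Delta W^\ep = -\beta n_H(V^\ep + W^\ep) + V^\ep \text{ in } G_f(\omega), \quad \p_\nu W^\ep = 0 \text{ on the inner boundary}, \quad W^\ep = 0 \text{ on } \ep^{-1}\p G.
\]
The structural properties I will exploit are: $n_H$ is strictly increasing (its derivative equals $\sum z_j^2 n_j^c e^{-z_j t} > 0$), $n_H(0) = 0$ by the electroneutrality condition \eqref{Neutrality}, and $n_H(t) \to \pm\infty$ as $t \to \pm\infty$ thanks to the mixed-sign valences ($z_1 < 0 < z_N$); additionally $|V^\ep| \leq V_M$ uniformly in $\ep$ by Proposition~\ref{prop7}.

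\textbf{Existence and uniqueness.} I would construct $W^\ep$ as the unique minimizer over $H^1$ with homogeneous Dirichlet trace on $\ep^{-1}\p G$ of
\[
\tilde J(W) = \frac12 \int_{G_f(\omega)} |\nabla W|^2\, dy + \beta \sum_j n_j^c \int_{G_f(\omega)} e^{-z_j(V^\ep + W)}\, dy - \int_{G_f(\omega)} V^\ep W\, dy,
\]
which is obtained from $J_\ep$ in \eqref{Minpsi} by substituting $\varphi = V^\ep + W$ and absorbing the boundary integral using the weak formulation of \eqref{BP1aux}. The functional is strictly convex, $\tilde J(0) < \infty$ follows from $V^\ep \in L^\infty$, and the coercivity argument of Allaire et al \cite{beta} (invoked in Remark~\ref{lem.pb}), which hinges on electroneutrality together with the mixed signs of the $z_j$'s, yields existence of a minimizer in $H^1\cap L^\infty$. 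Uniqueness follows from strict convexity; equivalently, the difference of any two solutions tested against itself yields, by the monotonicity of $n_H$, a vanishing Dirichlet integral, hence the difference is zero.

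\textbf{A priori bounds (main obstacle).} The uniform $L^\infty$ estimate is the crucial step, and I will establish it by a Stampacchia truncation applied directly to $W^\ep$. Testing with $(W^\ep - K)_+$, which is admissible since $(W^\ep - K)_+ = 0$ on $\ep^{-1}\p G$ for any $K > 0$ and the Neumann contribution vanishes, gives
\[
\int_{G_f(\omega)} |\nabla (W^\ep - K)_+|^2 \,dy = \int_{\{W^\ep > K\}} \bigl(-\beta n_H(V^\ep + W^\ep) + V^\ep\bigr)(W^\ep - K)\, dy.
\]
Choosing $K \geq V_M + n_H^{-1}(V_M / \beta)$ guarantees that on $\{W^\ep > K\}$ one has $V^\ep + W^\ep \geq -V_M + K \geq n_H^{-1}(V_M/\beta)$, so by monotonicity $\beta n_H(V^\ep + W^\ep) \geq V_M \geq V^\ep$; the integrand on the right is then non-positive, forcing $\nabla (W^\ep - K)_+ \equiv 0$, and the Dirichlet condition together with Poincar\'e yields $(W^\ep - K)_+ \equiv 0$. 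A symmetric truncation $(-W^\ep - K')_+$ gives the lower bound, so $\|W^\ep\|_{L^\infty} \leq C$ and hence $\|\Psi^\ep\|_{L^\infty} \leq C$ with $C$ depending only on $V_M$, $\beta$, $n_j^c$ and the $z_j$'s—not on $\ep$. For the energy bound, I test \eqref{BP1monaux} with $\Psi^\ep$ itself: since $n_H(0) = 0$ and $n_H$ is increasing we have $n_H(\Psi^\ep) \Psi^\ep \geq 0$, so the nonlinear contribution has the favorable sign, and the surface integral is controlled by $N_\sigma \|\sigma\|_\infty \|\Psi^\ep\|_\infty \cdot |\p G_m^\ep / \ep|$; conditions \textbf{R3.}--\textbf{R4.} provide a bounded surface-to-volume ratio $|\p G_m^\ep / \ep| \leq C |G_f(\omega)|$, completing \eqref{Energyep}. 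The $L^2$ control is then immediate from the $L^\infty$ bound.
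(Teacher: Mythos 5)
Your proposal is correct in substance and shares the paper's two key ingredients: the auxiliary function $V^\ep$ of Proposition \ref{prop7} is used to absorb the nonhomogeneous Neumann datum, and the uniform $L^\infty$ bound is obtained by testing with a truncation shifted by a constant built from $V_M$, $V_m$, $\beta$ and the valences, exploiting the strict monotonicity of $n_H$ and $n_H(0)=0$ from electroneutrality. Your version is in fact slightly cleaner on this point: you change variables to $W^\ep=\Psi^\ep-V^\ep$ and test with $(W^\ep-K)_+$ for $K\geq V_M+n_H^{-1}(V_M/\beta)$, whereas the paper keeps $\Psi^\ep$ and tests with $(\varphi_N-V^\ep+C_m)_-$ with an explicit $C_m$; these are equivalent. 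Your energy estimate (testing with $\Psi^\ep$, dropping $\beta\int n_H(\Psi^\ep)\Psi^\ep\geq 0$, and bounding the surface term by the $L^\infty$ bound times the total grain surface, which is $O(|G_f(\omega)|)$ under {\bf R1.}--{\bf R4.}) is also a legitimate alternative to the paper's route through the bound \eqref{AprioriN1} on the cut-off minimizers.

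The genuine divergence is in the existence step, and here you should be more careful. The paper does \emph{not} minimize the functional with the exponential nonlinearity directly: as it notes, $J_\ep$ is only defined on $W_\ep\cap L^\infty$, the exponential term is neither finite nor G\^ateaux-differentiable on all of $H^1$, and this is precisely why Steps 1 and 3 introduce the cut-off nonlinearity $n_{HN}$, the quadratically-extended potential $\Gamma_N$, prove $N$-independent $H^1$ and $L^\infty$ bounds for the truncated minimizers $\varphi_N$, and only then pass to the limit $N\to\infty$ by lower semicontinuity. Your sentence ``construct $W^\ep$ as the unique minimizer of $\tilde J$ ... yields existence of a minimizer in $H^1\cap L^\infty$'' hides exactly this difficulty behind the citation of \cite{beta}. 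That citation is defensible --- for fixed $\omega$ and $\ep$ the domain is deterministic and the paper's own Remark \ref{lem.pb} endorses the result --- and once you have a solution that is known to lie in $L^\infty$ (even with an $\ep$-dependent constant), your Stampacchia step legitimately upgrades the bound to a uniform one, so the test function $(W^\ep-K)_+$ is admissible and the argument closes. But if you intend the proof to be self-contained, you must either reproduce the truncation scheme or at least note that the weak formulation cannot be tested against $(W^\ep-K)_+$ before one knows $n_H(\Psi^\ep)(W^\ep-K)_+\in L^1$, which is what the cut-off (or the cited $L^\infty$ regularity) provides.
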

\begin{proof}$\; $

\vskip1pt

\underline{Step 1}.
\vskip1pt

Let $N > \max \{ |C_1| , |C_2| \}$, where $C_1$ and $C_2$ are upper and lower bounds for $\varphi_{N}$ obtained in Step 2 and independent of $N$. We introduce the cut-off nonlinearity $n_{HN}$ by
\begin{equation}\label{Cutoff1}
  n_{HN} (z) =\left\{
                 \begin{array}{ll}
                   \displaystyle  n_H (z) \quad \mbox{for} \quad |z|\leq N; &  \\
                   \displaystyle n_H (N) +z-N  \quad \mbox{for} \quad z > N ; &  \\
                   \displaystyle n_H (-N) +z+N  \quad \mbox{for} \quad z < -N. &
                 \end{array}
               \right.
\end{equation}
The cut-off functional is  defined by
\begin{gather*}
J_{N}(\varphi) = \frac{1}{2} \int_{ G_f (\omega)} | \nabla_y \varphi  |^2 \ dy
+ \beta  \int_{G_f (\omega)} \Gamma_N ( \varphi ) \  dy
    -   \int_{ G_f (\omega)} ( V^\ep \varphi +\nabla_y V^\ep \cdot \nabla_y \varphi )\ dy ,
\end{gather*}
where
\begin{equation*}
  \Gamma_N (z) =\left\{
                 \begin{array}{ll}
                   \displaystyle  \sum_{j\in j^+ \cup j^-} n_j (z) -\sum_{j\in j^- \cup j^+}  n_j (0)  \quad \mbox{for} \quad |z|\leq N; &  \\
                   \displaystyle \sum_{j\in j^+ \cup j^-}  n_j (N) + n_H (N) (z-N) + \frac{1}{2} (z-N)^2 - \sum_{j\in j^- \cup j^+}  n_j (0) \quad \mbox{for} \quad z > N ; &  \\
                   \displaystyle  \sum_{j\in j^+ \cup j^-}  n_j (-N) + n_H (-N) (z+N) + \frac{1}{2} (z+N)^2 - \sum_{j\in j^- \cup j^+}  n_j (0) \quad \mbox{for} \quad z < -N. &
                 \end{array}
               \right.
\end{equation*}
Then problem
\begin{equation}\label{Minimcut}
  \min_{\varphi \in W} J_{N} (\varphi)
\end{equation}
 where $W= \{ \ \varphi \in H^1 (G_f (\omega)) \ | \;  \varphi=0\ \hbox{on } \p G/ \ep \ \}$, has a unique solution $\varphi_{N} $. Furthermore
\begin{equation}\label{AprioriN1}
  || \nabla_y \varphi_{N} ||_{L^2 (G_f (\omega))^n} + || \varphi_{N} ||_{L^2 (G_f (\omega))} \leq C | G_f (\omega)|,
\end{equation}
where $C$ does not depend on $N$ and $\ep$.

\underline{Step 2}.

Our next goal is to establish $L^\infty$ estimates for $\varphi_{N}$, independent of $N$ and $\ep$.
We begin with the variational problem
\begin{equation}\label{VarPBinft}
   \int\limits_{G_f (\omega)} \nabla_y (\varphi_{N} - V^\ep) \cdot \nabla_y \varphi \ dy +
  \beta \int\limits_{G_f (\omega)}
  %{\color{blue}
  n_{HN} (\varphi_{N})
  %$}
  \varphi \ dy=
   \int\limits_{G_f (\omega)}  V^\ep \varphi \ dy ,
\end{equation}
for all  $\varphi \in H^1 (G_f (\omega))$, $\varphi=0$ on $\p G/ \ep$.
We take $\varphi = (\varphi_{N} - V^\ep +C_m)_-$, where
$$C_m=V_M + \frac{1}{z_N} \log \bigg( -(V_m  + z_1 \sum_{j\in j^-} n_j^c)_-/ (z_N n_N^c) +1 \bigg).$$
 Inserting this particular test function into equation (\ref{VarPBinft})
 %{\color{blue}
 and using that $|V^\ep-C_m|\leq N$,
 %}
 yield
\begin{gather}
  \int_{G_f (\omega)} |\nabla_y (\varphi_{N} - V^\ep+C_m)_-|^2  \ dy +\beta \int_{G_f (\omega)} %{\color{blue}
  \big( n_{HN} (\varphi_{N}) - n_H (V^\ep-C_m)\big) \underbrace{(\varphi_{N} - V^\ep+C_m)_-}_{=0 \; \mbox{ if } \; \varphi_N \geq N}
  %}
   \ dy=\notag \\
 \int_{G_f (\omega)}  ( V^\ep -n_H (V^\ep-C_m))(\varphi_{N} - V^\ep+C_m)_- \ dy .\label{VarPBinft2}
\end{gather}
Next
\begin{gather*}
   V^\ep-n_H (V^\ep-C_m) = V^\ep+ \sum_j z_j n_j^c e^{-z_j (V^\ep-C_m)} \geq V_m + z_N n_N^c e^{-z_N (V_M-C_m)} + z_1 \sum_{j\in j^-} n_j^c  \\
  \geq 0 \quad \mbox{for} \quad C_m=V_M + \frac{1}{z_N} \log \bigg( -(V_m  + z_1 \sum_{j\in j^-} n_j^c)_-/ (z_N n_N^c) +1 \bigg)
\end{gather*}
and we conclude that $\varphi_{N} (y) \geq V_m -V_M + \frac{1}{z_N} \log \bigg( -(V_m  + z_1 \sum_{j\in j^-} n_j^c)_-/ (z_N n_N^c) +1 \bigg)>-N$. The upper bound is analogous.

%{\color{blue} WARNING: again it is used that $G_f (\omega) $ is bounded.}

\underline{Step3}.

With a priori bounds (\ref{AprioriN1}) and the uniform $L^\infty$-bounds, there exists a subsequence of $\varphi_{N}$, again denoted by the same subscript, and $\varphi_\ep \in W\cap L^\infty (G_f (\omega))$, such that
\begin{equation}\label{Limits}
  \left\{
    \begin{array}{ll}
      \displaystyle  \nabla \varphi_{N} \rightharpoonup \nabla \varphi_\ep \quad \mbox{weakly in} \quad L^2 (G_f (\omega))^n; &  \\
\displaystyle  \varphi_{N} \rightharpoonup\varphi_\ep \quad \mbox{weakly in} \quad L^2 (G_f (\omega)); &  \\
       \displaystyle \varphi_{N} \rightharpoonup \varphi_\ep \quad \mbox{weak-* in} \quad L^\infty (G_f (\omega)); &  \\
       \displaystyle \varphi_{N} \to \varphi_\ep \quad \mbox{strongly in} \quad L^2 (G_f (\omega)), &
    \end{array}
  \right.
\end{equation}
as $N\to +\infty$.
Next
$$ \lim \inf_{ N\to +\infty} J_{N} (\varphi_{N}) =\lim \inf_{ N\to +\infty} J (\varphi_{N})\geq J (\varphi_\ep)$$
and
$$ J(g) = J_{{N(g)}} (g) \geq J_{N(g)} (\varphi_{N(g)}) \geq J (\varphi_\ep), \quad \forall g\in W% H^1(G_f  (\omega))
, \; \Gamma (g) \in L^1 (G_f (\omega)).$$
Hence $\varphi_\ep$ solves variational problem (\ref{BP1monaux}) and provides the minimum in the corresponding minimization problem. The strict convexity implies uniqueness and $\varphi_\ep =\Psi^\ep$.

\end{proof}

It remains passing to the limit in problem (\ref{BP1monaux})  as $\ep\to0$.

%First, using hypothesis R1.-R3. we can apply  classical constructions of the $H^1$-extension operator (see for instance  %\cite{ACPDMP92}, \cite{CioSJP79} and \cite{JKO94}). Hence there exists an $H^1$-extension of $\Psi^\ep$, denoted ${\tilde %\Psi}^\ep$, which satisfies estimates
%(\ref{Energyep})-(\ref{Boundep}).

{ We are interested in homogenization of a problem posed in $G_f^\ep (\omega)$. We set
\begin{equation}\label{resc1}
  \Psi^{0, \ep} (x)=  \Psi^\ep (\frac{x}{\ep}), \quad x\in G.
\end{equation}
Estimates
(\ref{Energyep})-(\ref{Boundep}) then read
\begin{gather}
 \int_{G_f^\ep (\omega)} (|\ep \nabla \Psi^{0,\ep} |^2 + | \Psi^{0,\ep} |^2 )\ dx \leq C,
\label{EnergyepS}   \\
  || \Psi^{0,\ep} ||_{L^\infty (G_f^\ep (\omega))}\leq C, \label{BoundepS}
\end{gather}
where $C$ is a deterministic constant, independent of $\ep$.}
 %Let $\mathcal{B} (G)$ be a vector space of all functions defined everywhere on $G$
 %that are bounded and measurable over $G$.

\begin{theorem} \label{Limiteqequilib} Let $\Psi^{0, \ep}$ be defined by (\ref{resc1}). Then there exists $\Psi^0 \in L^2 (G, \mathcal{D}(\Omega))\cap
%{\color{blue}
L^\infty (\Omega \times G)
%}
$ such that
\begin{gather}
\Psi^{0, \ep} \xrightarrow{s. 2-s.m.} \Psi^0 (x, \omega),
\label{Convequilib1}
   \\
  \ep \nabla \Psi^{0, \ep} \xrightarrow{s. 2-s.m.} \nabla_\omega \Psi^0 (x, \omega),
\label{Convequilib2} %\\
%\Psi^{0, \ep} \rightarrow \Psi^0 \quad \mbox{stochastic two-scale weak}^* \ \mbox{in } \, {\color{blue} (L^1 (\Omega; C({\bar %G})))^*}. \label{Convequilib3}
\end{gather}
The limit function $\Psi^0$ is the unique solution to the variational equation
\begin{gather}
\int_{\mathcal{F}} \nabla_\omega \Psi^0  \cdot \nabla_\omega g \ d\mu + \beta  \int_{\mathcal{F}} n_H( \Psi^0 )  g \ d\mu =
-N_\sigma \int_{\mathcal{F}} \big( \frac{\sigma}{C_0} h(d (\omega)) \nabla_\omega d(\omega) \cdot \nabla_\omega g + \notag \\ \frac{\sigma}{C_0} h(d(\omega)) \Delta_\omega d(\omega) g +
 \frac{\sigma}{C_0} h'(d(\omega)) |\nabla_\omega d|^2 g + \frac{\nabla_\omega \sigma}{C_0} \cdot \nabla_\omega d  h(d) g \big) \ d\mu, \quad \forall g\in \mathcal{D}(\Omega)\cap L^\infty (\Omega).
\label{Eqvareq1}
\end{gather}
\end{theorem}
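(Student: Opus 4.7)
The approach is stochastic two-scale convergence in the mean, exploiting the fact that $\Psi^{0,\ep}$ is the unique minimizer of the strictly convex functional $J_\ep$ underlying Theorem \ref{PoissBoleq}.

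From the deterministic, $\ep$-uniform bounds \eqref{EnergyepS}--\eqref{BoundepS}, Proposition \ref{twosccompact} applied to $\Psi^{0,\ep}$ (whose scaled gradient $\ep\nabla\Psi^{0,\ep}$ is bounded, matching the role of $\Psi^\ep$ there) yields, along a subsequence, a function $\Psi^0\in L^2(G;\mathcal{D}(\Omega))$ with $\Psi^0=0$ on $\mathcal{M}$, satisfying the convergences \eqref{Convequilib1}--\eqref{Convequilib2}. The uniform $L^\infty$-bound \eqref{BoundepS} is preserved by weak-$*$ lower semicontinuity, so $\Psi^0\in L^\infty(G\times\Omega)$. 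I would then rewrite the weak form of \eqref{BP1}, converting the surface integral into a volume integral via the regularised distance $h(d(\cdot/\ep))$ exactly as in Remark \ref{rem8}; the resulting coefficients $\sigma\,h(d)/C_0$, $\nabla d$, $\Delta d$, etc.\ are admissible two-scale functions.

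Testing the rewritten equation against $\phi^\ep(x)=\chi(x)\,g(\mathcal{T}(x/\ep)\omega)$ with $\chi\in C_c^\infty(G)$ and $g\in\mathcal{D}(\Omega)\cap L^\infty(\Omega)$ vanishing on $\mathcal{M}$ (density ensured by Remark \ref{rem_3}), and using $\nabla\phi^\ep=g\,\nabla\chi+\ep^{-1}\chi\,\nabla_\omega g(\mathcal{T}(\cdot/\ep)\omega)$, the term $\ep^2\nabla\Psi^{0,\ep}\cdot\nabla\phi^\ep$ splits into an $O(\ep)$ residual plus the principal part $\int_{G_f^\ep}\chi\,(\ep\nabla\Psi^{0,\ep})\cdot\nabla_\omega g(\mathcal{T}(\cdot/\ep)\omega)\,dx$, which converges after expectation to $\int_G\chi(x)\,dx\int_{\mathcal{F}}\nabla_\omega\Psi^0\cdot\nabla_\omega g\,d\mu$ by \eqref{Convequilib2}. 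The recast right-hand side of \eqref{BP1} converges to the right-hand side of \eqref{Eqvareq1} in the same way, since each factor is admissible.

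The main obstacle is the nonlinearity $\beta\int_{G_f^\ep}n_H(\Psi^{0,\ep})\,\chi g\,dx$: weak two-scale convergence alone is insufficient. My plan is to upgrade to strong s.2-s.m.\ convergence of $\Psi^{0,\ep}$ through a convergence-of-energies argument. Set
$$J_0(\psi)=\tfrac12\int_{G\times\mathcal{F}}|\nabla_\omega\psi|^2\,dx\,d\mu+\beta\int_{G\times\mathcal{F}}\sum_{j=1}^N n_j^c e^{-z_j\psi}\,dx\,d\mu+(\text{recast surface contribution}).$$
I would establish (i) $\liminf_{\ep\to 0}\mathbb{E}\,J_\ep(\Psi^{0,\ep})\ge J_0(\Psi^0)$, using weak lower semicontinuity for the Dirichlet part and Fatou's lemma for the convex bulk (the uniform $L^\infty$-bound supplying domination), and (ii) a recovery sequence $\psi(\mathcal{T}(\cdot/\ep)\omega)$ for smooth admissible $\psi$ vanishing on $\mathcal{M}$. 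Since $\Psi^{0,\ep}$ minimizes $J_\ep$, matching yields $\mathbb{E}\,J_\ep(\Psi^{0,\ep})\to J_0(\Psi^0)$, which combined with the weak limit of $\ep\nabla\Psi^{0,\ep}$ forces strong s.2-s.m.\ convergence $\Psi^{0,\ep}\to\Psi^0$; continuity of $n_H$ and the $L^\infty$-bound then give $n_H(\Psi^{0,\ep})\to n_H(\Psi^0)$ s.2-s.m. Arbitrariness of $\chi$ delivers \eqref{Eqvareq1} pointwise in $x\in G$; strict monotonicity of $-n_H$, a consequence of the electroneutrality condition \eqref{Neutrality}, yields uniqueness, so $\Psi^0$ depends only on $\omega$ and the whole sequence converges.
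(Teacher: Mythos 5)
Your proposal is correct in outline but takes a genuinely different route at the decisive step. The paper passes to the limit in the nonlinearity via Minty's lemma: it rewrites the rescaled problem as the variational inequality \eqref{EqvareqEP1}, in which $n_H$ is evaluated at the \emph{test function} $g^\ep$ rather than at $\Psi^{0,\ep}$; since $n_H$ is monotone increasing, weak stochastic two-scale convergence of $\Psi^{0,\ep}$ and $\ep\nabla\Psi^{0,\ep}$ then suffices, and a second application of Minty's lemma in the limit recovers \eqref{Eqvareq1}. You instead upgrade to strong two-scale convergence through convergence of energies for the minimizers of $J_\ep$ (a $\Gamma$-convergence-type liminf/recovery argument), and only then pass to the limit in $n_H(\Psi^{0,\ep})$. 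Both routes are viable; the paper's is shorter because it never needs strong convergence, while yours yields more (strong convergence of $\Psi^{0,\ep}$, in the spirit of Section \ref{StrongS}) at the cost of extra machinery. Three points in your plan need tightening. First, lower semicontinuity of the bulk term $\int\sum_j n_j^c e^{-z_j\psi}$ under s.2-s.m.\ convergence follows from convexity of the integrand, not from Fatou's lemma, since you have no a.e.\ convergence at that stage. Second, convergence of the total energy plus weak convergence gives strong two-scale convergence of the \emph{gradients} directly, but to get strong convergence of $\Psi^{0,\ep}$ itself (which is what the nonlinear term requires) you must invoke the \emph{strict} convexity of the bulk density together with convergence of that individual term (a Visintin-type argument); this should be made explicit. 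Third, uniqueness follows from strict monotonicity of $n_H$ (equivalently strict convexity of the limit functional), which holds irrespective of the electroneutrality condition \eqref{Neutrality}; the latter is needed for coercivity, and your ``$-n_H$'' is a sign slip. With these repairs your argument goes through and is consistent with the paper's remark pointing to convex-structure limit passages in \cite{HLL20}.
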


\begin{proof} Using a priori estimates (\ref{EnergyepS})-(\ref{BoundepS}) and the stochastic two-scale convergence in the mean compactness theorem 3.7 from \cite{BMW}
%{\color{blue}
and Proposition \ref{twosccompact},
%}
 we conclude that, after taking a proper subsequence, the sequences $\{ \Psi^{0, \ep} \}$ and $\{ \ep \nabla  \Psi^{0, \ep} \}$ have stochastic two-scale limits in the mean $\Psi^0$ and $\nabla_\omega \Psi^0$. Furthermore, $\displaystyle \chi_{G_f^\ep (\omega)} $ converges in stochastic two-scale in the mean toward $\chi_{\mathcal{F}}$.

%{\color{blue}
Because of the lower-semicontinuity with respect to the stochastic two-scale convergence in the mean of the $L^q$-norms, $1<q<+\infty$, and estimate (\ref{BoundepS}), the $L^q (G\times \mathcal{F})$-norms of $\Psi^0$ are bounded uniformly with respect to $q$. Hence, $\Psi^0\in L^\infty (\Omega \times G)$, with the same constant as in the bound (\ref{BoundepS}).
%}

Let now $\zeta \in C^\infty_0 (G)$ and $g\in  \mathcal{D}(\Omega)\cap L^\infty (\Omega)$ . Let $g^\ep (x, \omega) = g (\mathcal{T} (\frac{x}{\ep}) \omega) \zeta (x)$, $\sigma^\ep (x, \omega) = \sigma (\mathcal{T} (\frac{x}{\ep})\omega)$  and $d^\ep (x, \omega) =d(\mathcal{T} (\frac{x}{\ep})\omega)$. Using Minty's lemma we write the scaled back
problem (\ref{BP1monaux}) in the equivalent form
\begin{gather}
\int_{\Omega} \int_G \ep \chi_{G_f^\ep (\omega)} \nabla g^\ep \cdot  \ep \nabla ( g^\ep -\Psi^{0,\ep})   \ dx d\mu + \beta  \int_{\Omega} \int_G \chi_{G_f^\ep (\omega)}  n_H(g^\ep )  (g^\ep - \Psi^{0,\ep}) \ dx d\mu +\notag \\
N_\sigma  \int_{\Omega} \int_G \big( \frac{\sigma^\ep }{C_0} h(d^\ep) \ep \nabla d^\ep \cdot \ep \nabla (g^\ep - \Psi^{0,\ep})+  \frac{\sigma^\ep}{C_0} (h(d^\ep) \ep^2\Delta d^\ep   + h'(d^\ep) |\ep \nabla d^\ep|^2) (g^\ep-\Psi^{0,\ep}) +\notag \\
 \frac{\ep \nabla \sigma^\ep}{C_0} \cdot \ep \nabla d^\ep  h(d^\ep) (g^\ep - \Psi^{0,\ep}) \big) \ dx d\mu \geq 0, \quad \forall g\in \mathcal{D}(\Omega)\cap L^\infty (\Omega), \; \zeta \in C^\infty_0 (G).
\label{EqvareqEP1}
\end{gather}
Passing to the limit $\ep \to 0$ is now straightforward (see for instance \cite{BAP98} and \cite{BAP03}). It yields
\begin{gather}
\int_G  \int_{\mathcal{F}}  \nabla_\omega g (\omega) \zeta (x) \cdot  \nabla_\omega (g (\omega) \zeta (x)  -\Psi^{0})   \ dx d\mu + \beta   \int_G \int_{\mathcal{F}}  n_H(g (\omega) \zeta (x) )  ( g (\omega) \zeta (x)- \Psi^{0}) \ dx d\mu +\notag \\
N_\sigma   \int_G \int_{\mathcal{F}} \big( \frac{\sigma (\omega) }{C_0} h(d (\omega))  \nabla_\omega d (\omega) \cdot  \nabla_\omega (g (\omega) \zeta (x) - \Psi^{0})+\notag \\
  \frac{\sigma (\omega)}{C_0} (h(d (\omega)) \Delta_\omega d (\omega)   + h'(d (\omega)) | \nabla_\omega) d (\omega)|^2) (g (\omega) \zeta (x) -\Psi^{0}) +\notag \\
 \frac{ \nabla_\omega \sigma (\omega)}{C_0} \cdot  \nabla_\omega d (\omega)  h(d (\omega)) (g (\omega) \zeta (x) - \Psi^{0}) \big) \ dx d\mu \geq 0, \quad \forall g\in \mathcal{D}(\Omega)\cap L^\infty (\Omega), \; \zeta \in C^\infty_0 (G).
\label{EqvareqEP2}'
\end{gather}
Using again Minty's lemma we obtain that $\Psi^0$ satisfies problem (\ref{Eqvareq1}). Due to the strict convexity, $\Psi^0$ is unique and the whole sequence converges.  Moreover, $\Psi^0$ does not depend on $x$, and, by construction,
the function $\Psi^0\big(\mathcal{T} (x/ \ep) \omega\big)$ satisfies a.s. the {%\color{blue}
Poisson-Boltzmann equation and the Neumann condition in  \eqref{BP1}. }
\end{proof}
\begin{remark} For passing to the stochastic 2-scale limits for more complicated problems with convex structure, we refer to Hudson et al \cite{HLL20}.\end{remark}
%}
%Since $\|\varphi_R\|_{L^\infty}$ admits a uniform in
%$R$ upper bound, local elliptic estimates apply, and we conclude that $\varphi_R$ converges, as $R\to\infty$, in
%$H^1_{\rm loc}(F(\omega))$ to a bounded solution $\varphi$ of problem  (\ref{BP1per}).
%{\color{magenta}  Here we should prove uniqueness of a bounded solution.}
\section{Homogenization }\label{Passlimit}
{%\color{blue}

In Subsection \ref{PerioPB} we solved
the nonlinear Poisson-Boltzmann equation, for the
equilibrium potential $\Psi^{0, \varepsilon} (x)$. It allowed computation of the equilibrium  concentrations $n^{0,\ep}_j(x) = n_j^c \exp \{ -z_j \Psi^{0, \varepsilon} (x) \} .$ Furthermore, we established that, as $\varepsilon \to 0$, $\Psi^{0, \varepsilon} (x)$ converges  stochastically  two-scales to $\Psi^0 (\omega)$, the unique solution of the variational problem (\ref{Eqvareq1}). Since
the goal of this section  is to homogenize
the system of linearized equations (\ref{LIN2})-(\ref{EPPR4a}) and of Section \ref{sec4} to establish  Onsager's relationship between the fluxes and the gradients of potentials in the {\bf bulk}, we  make
 a further simplification of the original system and replace in the linearized system the function $n_j^{0, \varepsilon} $ with
 $n_\varepsilon^j (x)=n_j^c \exp \{ -z_j  \Psi^0\big(\mathcal{T} (x/ \ep) \omega\big) \}$.
%the realizations of its stochastic two-scale limit $\Psi^0$ being a solution of problem \eqref{BP1per}

}

The formal two-scale asymptotic expansion method follows the periodic case  %(see Bensoussan et al \cite{blp}, Hornung ed. %\cite{hornung} and Sanchez-Palencia
%\cite{sanchez}). was applied to system (\ref{LIN2})-(\ref{EPPR5}) in
(see Looker \& Carnie \cite{LC:06}).
%It is also a special case of more general expansions in \cite{MM:02}, \cite{MM:03},
%\cite{MM:06}, \cite{MM:08}.
The fast variable is now $y=\mathcal{T} (x/ \ep) \omega$ and the expansion of the solutions of
(\ref{LIN2})-(\ref{EPPR4a}) now reads
$$
   \left\{
     \begin{array}{l}
       \dsp \mathbf{u}^\ep (x) = \mathbf{u}^0 (x,y) + \ep \mathbf{u}^1 (x,y) +\dots ,  \\
       \dsp P^\ep (x) = p^0 (x) + \ep p^1 (x,y) + \dots , \\
%       \dsp \Psi^\ep (x) = \Psi^{0 } (y) + \ep \Psi^1 (x,y) +\dots ,  \\
       \dsp \Phi^\ep_j (x) = \Phi^0_j (x) + \ep \Phi^1_j (x,y) +\dots .
     \end{array}
   \right.
$$
%}
% $\Psi^{0}$ and $n_j^{0}=n_j^c \exp \{ -z_j \Psi^0 \}$ a, %through
%\begin{gather}
%    \Psi^\ep (x) = \Psi^0 (\mathcal{T} (\frac{x}{\ep})\omega), \quad n_j^{0, \ep} (x) = n_j^c \exp \{ -z_j \Psi^\ep (x) \}, %\quad
%     j=1,\dots , N,\label{Psiep}
%\end{gather}
% We switch now to the equations for $\Phi_j^\ep$, $\mathbf{u}^\ep$ and $p^\ep$.
% These functions  satisfy  equations (\ref{LIN2})-(\ref{EPPR4a}).

We do not dwell on formal expansions and start by introducing the functional spaces related to the velocity field and the ionic potentials:
%$$
%
%$$
%and
$$
\mathcal{H}^\ep = \{ \mathbf{z} \in H^1_0 (G_f^\ep (\omega)  )^n , \; \mbox{ div } \mathbf{z}=0 \; \hbox{ in } \;  G_f^\ep (\omega) \} , \quad W^\ep =\{ z \in H^1 (G_f^\ep (\omega) ) , \; z=0 \mbox{ on } %{\color{blue}
\p G
%}
  \}.
$$
Then, summing the variational formulation of  (\ref{EPPR4L})-(\ref{EPPR4a}) with
that of  (\ref{LIN2})-(\ref{LIN4}) (weighted by $z_j^2 / \pej$) yields a.s in $\omega$:
\begin{gather}
\mbox{ Find } \; \mathbf{u}^\ep \in \mathcal{H}^\ep \; \mbox{ and } \; \{ \Phi_j^\ep \}_{j=1, \dots , N}  \in ( W^\ep)^N,%\quad \Phi_j^\ep =\Phi_j^\ep (x, %\mathcal{T}(\frac{x}{\ep})\omega),
 \notag \\
\mathbf{a}\left( ( \mathbf{u}^\ep , \{ \Phi_j^\ep \} ) , ( \xi, \{ \phi_j \} ) \right) := \ep^2 \int_{G_f^\ep (\omega)} \nabla \mathbf{u}^\ep : \nabla \xi \ dx %+ \notag \\
 +\sum_{j=1}^N z_j \int_{G_f^\ep (\omega)}  \big( \mathbf{u}^\ep \cdot \nabla \phi_j - \xi \cdot \nabla \Phi_j ^\ep \big) n^j_\ep \ dx +\notag \\  \sum_{j=1}^N \frac{z_j^2}{\pej} \int_{G_f^\ep (\omega)} n^j_\ep \nabla \Phi_j^\ep \cdot \nabla \phi_j \ dx = < \mathcal{L}, ( \xi, \{ \phi_j \} ) > := \notag \\
\sum_{j=1}^N z_j \int_{G_f^\ep (\omega)} n^j_\ep \mathbf{E}  \cdot
\left( \xi - \frac{z_j}{\pej} \nabla \phi_j \right) \, dx - \int_{G_f^\ep (\omega)} \mathbf{f}^*\cdot\xi \, dx ,
\label{VAREP}
\end{gather}
for any test functions $\xi \in \mathcal{H}^\ep$ and $\{ \phi_j \}_{j=1, \dots , N} \in (W^\ep)^N$.\vskip1pt
{%\color{blue}
We recall that the concentrations $n^{0,\ep}_j$ are replaced with the statistically homogeneous concentrations $n_\varepsilon^j$.} \vskip1pt
Before studying problem (\ref{VAREP}), we briefly discuss   Poincar\'e inequality in $G_f^\ep (\omega)$. {%\color{blue}
For a general class of random domains, it was studied in Beliaev \& Kozlov \cite{BK96}.

%In order to obtain Poincar\'e's inequality for a random disperse medium, we need} to reinforce
%assumption {\bf R3.} and to replace it by
%\begin{itemize}
%  \item[\bf R3A.]   The distance between any two domains is greater than a positive deterministic constant and the distance between two closest domains is always smaller than a positive deterministic constant.
%\item[\bf R4.]   The diameter of  any domain is not greater than a positive deterministic constant.
%\end{itemize}

With assumptions {\bf R1.--R5.}, the proof of this inequality is analogous to that in the periodic case (see Allaire \cite[Sec 3.1.3, Lemma 1.6]{All97}):
\begin{lemma}\label{Poinc} Under assumptions {\bf R1.--R5.} a.s. in $\omega$,
\begin{equation}
\label{eq.poincare}
|| \xi ||_{L^2 (G_f^\ep (\omega))^n} \leq C \ep ||\nabla  \xi ||_{L^2 (G_f^\ep (\omega))^{n^2}}, \quad \forall \xi\in %\mathcal{H}^\ep,
{%\color{blue}
H^1_0 (G_f^\ep (\omega))^n,}
\end{equation}
where $C$ is a deterministic constant.
\end{lemma}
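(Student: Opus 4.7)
The plan is to reduce \eqref{eq.poincare} by scaling to a Poincar\'e inequality with an $\omega$-independent constant on the unrescaled random fluid domain $F(\omega)$. Setting $\tilde\xi(y)=\xi(\varepsilon y)$ produces an $H^1_0$ function on $F(\omega)\cap \varepsilon^{-1}G$, and a straightforward change of variable turns \eqref{eq.poincare} into
\[
\|\tilde\xi\|_{L^2(F(\omega)\cap\varepsilon^{-1}G)}\le C\,\|\nabla_y\tilde\xi\|_{L^2(F(\omega)\cap\varepsilon^{-1}G)},
\]
with the same constant $C$. Extending $\tilde\xi$ by zero across the grains makes it an $H^1(\mathbb{R}^n)$ function that vanishes on $M(\omega)$ and outside $\varepsilon^{-1}G$. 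So it suffices to produce an $\varepsilon$-free Poincar\'e estimate for functions in $H^1(\mathbb{R}^n)$ vanishing on $M(\omega)$.

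Next I would partition $\mathbb{R}^n$ into a grid of closed cubes $\{Q_k\}$ of fixed side length $L$ chosen larger than $2r_0$, where $r_0$ is the constant of assumption {\bf R5.}. For each cube, {\bf R5.} applied at its center produces a grain meeting $Q_k$. The crucial quantitative step is to establish a deterministic $c_0>0$, depending only on $L$ and the constants of {\bf R1.--R5.}, such that a.s.\ $|Q_k\cap M(\omega)|\ge c_0$ for every $k$. Once that is in hand, the standard cube Poincar\'e inequality with zero trace on a set of measure at least $c_0$, applied to the restriction of $\tilde\xi$ to each $Q_k$, yields
\[
\int_{Q_k}|\tilde\xi|^2\,dy\le C(L,c_0)\int_{Q_k}|\nabla_y\tilde\xi|^2\,dy,
\]
with $C(L,c_0)$ independent of $k$ and $\omega$. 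Summing over the disjoint cubes $Q_k$ that meet $\varepsilon^{-1}G$ and returning to the variable $x$ yields \eqref{eq.poincare}.

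The main obstacle is the uniform volume bound $|Q_k\cap M(\omega)|\ge c_0$. Assumption {\bf R5.} alone supplies only a non-empty intersection, so by itself it is insufficient: a grain of arbitrarily small diameter could meet $Q_k$ in a set of tiny measure. The way around this is to combine {\bf R2.} with {\bf R1.}: the uniform upper bound $K_{\max}$ on the curvature of $\partial M(\omega)$ is equivalent to a uniform interior ball condition, so every connected component $M_j(\omega)$ contains an inscribed ball of radius $\rho_0=1/K_{\max}$. Taking the cube side $L$ larger than $2r_0+4\rho_0$ and using {\bf R3.--R4.} to localize the grain hitting the central sub-cube of radius $r_0$, one sees that its inscribed ball lies entirely in $Q_k$, which gives $c_0=\kappa_n\rho_0^{\,n}$ (with $\kappa_n$ the volume of the unit ball). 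The remainder of the argument is the routine summation carried out in the periodic case in Allaire \cite[Sec.~3.1.3, Lemma~1.6]{All97}.
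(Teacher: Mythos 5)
Your argument is correct and follows the same overall strategy as the paper's proof: rescale by $\varepsilon$ to the unit-scale random geometry, extend by zero, cover $\varepsilon^{-1}G$ by uniformly sized regions on each of which the function vanishes on a definite portion of the solid, apply a local Poincar\'e inequality with a uniform constant, and sum. The difference is in the tessellation and in how the uniformity of the local constant is justified. The paper tessellates $\mathbb{R}^n$ by the Voronoi-type cells $F_j(\omega)\cup\overline{M_j(\omega)}$ associated with the grains (these cells have deterministically bounded diameter by \textbf{R5.} and contain the whole grain $M_j(\omega)$ on which $\tilde\xi$ vanishes), and simply asserts that the local Poincar\'e constant is deterministic. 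You instead use a fixed cubic grid and make explicit the quantitative ingredient the paper leaves implicit: a deterministic lower bound on the measure of the zero set inside each cell, which you extract from the curvature bound \textbf{R2.} via a uniform interior-ball condition. This is a genuine improvement in transparency, with two caveats. First, the implication ``bounded boundary curvature $\Rightarrow$ inscribed ball of radius $1/K_{\max}$'' holds for the solid blob-like grains the hypotheses intend, but is not a formal consequence of \textbf{R1.}--\textbf{R2.} alone (a thin annular grain has small boundary curvature and no large inscribed ball), so strictly speaking one should either add a lower volume bound for the grains or argue via capacity; the paper's own proof has the same unstated uniformity issue for its Voronoi cells. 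Second, for cubes near $\partial(\varepsilon^{-1}G)$ the grain produced by \textbf{R5.} may not belong to $\mathcal{J}(\varepsilon)$ and hence is not removed from the fluid domain; there one must use instead that $\tilde\xi$ vanishes outside $\varepsilon^{-1}G$ (or absorb the boundary layer into neighbouring cells), a routine adjustment that the paper also glosses over. Also, your side-length condition should involve the diameter bound of \textbf{R4.} (take $L\geq 2r_0+2D_{\max}$) rather than $4\rho_0$, but this is cosmetic.
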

\begin{proof}
First, we rescale $\xi (x)$ to ${\tilde \xi}$ being defined on  {%\color{blue}
$\displaystyle \varepsilon^{-1} G$}
%{\color{red} ANDREY: This should replace  $G (\omega)$ which was never defined!}.
Next, we extend  $\xi (x)$ by zero to the complement of $G_f (\omega)$.   Let $F_j (\omega)$ be a subset of $F(\omega)$ contained the points having $M_j (\omega)$ as the closest matrix block. This way we obtain a tessellation of the whole space. Now we have  Poincar\'e inequality for every domain $F_j (\omega)\cup {\overline M_j (\omega)}$, with a deterministic constant independent of $j$.
Hence, we have Poincar\'e's inequality for all $j\in \mathcal{J} (\ep)$. Next we add  the complement of the closure of the union of all domains $F_j (\omega)\cup {\overline M_j (\omega)}$, with $j\in \mathcal{J} (\ep)$, in $\displaystyle \varepsilon^{-1} G$. It yields Poincar\'e's inequality in {%\color{blue}
$\displaystyle \varepsilon^{-1} G$ for ${\tilde \xi}$,} with deterministic constant independent of $\ep$. Rescaling back with respect to $\ep$, gives inequality (\ref{eq.poincare}).
\end{proof}
\begin{proposition}\label{APRIORI1}
Let us assume {\bf R1.--R5.} and let $\mathbf{E}{%\color{blue}
 =\nabla \Psi^{ext}}  $ and $ \mathbf{f}^*$ be
given elements of $L^2 (G)^n$. Then
variational problem (\ref{VAREP}) admits a unique solution
$\displaystyle ( \mathbf{u}^\ep , \{ \Phi_j^\ep \}_{1\leq j\leq N} ) \in \mathcal{H}^\ep \times (W^\ep )^N$.
%$\Phi_j^\ep$ statistically homogeneous and $\int_{G_f^\ep (\omega)}\Phi_j^\ep (x) \ dx=0$.
Furthermore, there exists a deterministic constant $C$, which does not depend on $\ep$, nor on
$\mathbf{f}^*$ and $\mathbf{E} $, such that
the solution satisfies the following a priori estimates
\begin{gather}
|| \mathbf{u}^\ep ||_{L^2 (G_f^\ep (\omega))^n} +\ep ||\nabla  \mathbf{u}^\ep ||_{L^2 (G_f^\ep (\omega))^{n^2}} \leq C\bigg( || \mathbf{E}  ||_{L^2 (G )^n} +  ||  \mathbf{f}^* ||_{L^2 (G )^n} \bigg)
 \label{AprioriVelocity} \\
\hskip-10pt \max_{1\leq j\leq N} || \Phi_j^\ep ||_{H^1 (G_f^\ep (\omega))} \leq C \bigg( || \mathbf{E}  ||_{L^2 (G)^n} + ||  \mathbf{f}^* ||_{L^2 (G )^n} \bigg).
 \label{AprioriPotential}
\end{gather}
\end{proposition}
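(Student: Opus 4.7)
The natural approach is to set up (\ref{VAREP}) as a Lax--Milgram problem on the divergence-free product space $\mathcal{H}^\ep \times (W^\ep)^N$, equipped with the $\ep$-dependent norm
\begin{equation*}
\|(\mathbf{v}, \{\psi_j\})\|_\ep^2 = \ep^2 \|\nabla \mathbf{v}\|_{L^2(G_f^\ep(\omega))}^2 + \sum_{j=1}^N \|\nabla \psi_j\|_{L^2(G_f^\ep(\omega))}^2.
\end{equation*}
By Theorems \ref{PoissBoleq}--\ref{Limiteqequilib}, $\Psi^0 \in L^\infty(\Omega)$ with a deterministic bound, so the rescaled concentrations $n^j_\ep(x)=n_j^c\exp\{-z_j \Psi^0(\mathcal{T}(x/\ep)\omega)\}$ satisfy $0 < c_1 \leq n^j_\ep(x) \leq c_2$ almost surely with deterministic constants $c_1,c_2$ independent of $\ep$. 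Lemma \ref{Poinc} applied to $\mathbf{v}\in \mathcal{H}^\ep$, together with the standard Poincar\'e inequality for $\psi_j\in W^\ep$ (which vanishes on $\partial G$), shows that $\|\cdot\|_\ep$ is equivalent to the full product $H^1$-norm with deterministic equivalence constants.

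The crucial observation is that the two cross-coupling contributions in $\mathbf{a}$ are \emph{antisymmetric} in the pair $(\mathbf{v}, \{\psi_j\})$: when one substitutes $(\xi, \{\phi_j\}) = (\mathbf{u}^\ep, \{\Phi_j^\ep\})$, the sum $\sum_j z_j \int_{G_f^\ep(\omega)} n^j_\ep(\mathbf{u}^\ep\cdot\nabla\Phi_j^\ep - \mathbf{u}^\ep\cdot\nabla\Phi_j^\ep)\,dx$ vanishes identically, leaving
\begin{equation*}
\mathbf{a}\big((\mathbf{u}^\ep,\{\Phi_j^\ep\}), (\mathbf{u}^\ep,\{\Phi_j^\ep\})\big) = \ep^2 \|\nabla \mathbf{u}^\ep\|^2 + \sum_{j=1}^N \frac{z_j^2}{\pej}\int_{G_f^\ep(\omega)} n^j_\ep\,|\nabla \Phi_j^\ep|^2\,dx \geq c\, \|(\mathbf{u}^\ep,\{\Phi_j^\ep\})\|_\ep^2,
\end{equation*}
which yields coercivity with a deterministic constant coming from the lower bound $c_1$ on $n^j_\ep$. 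For continuity, the off-diagonal terms are controlled by $|z_j\int n^j_\ep \mathbf{v}\cdot\nabla\phi_j\,dx| \leq c_2 \|\mathbf{v}\|_{L^2}\|\nabla\phi_j\|_{L^2} \leq C\ep\|\nabla\mathbf{v}\|\|\nabla\phi_j\|$ via Cauchy--Schwarz and Lemma \ref{Poinc}, which is bounded by the product norm; the diagonal terms are immediate.

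The right-hand side is continuous by the same estimates: using Cauchy--Schwarz, the $L^\infty$ bound on $n^j_\ep$ and Lemma \ref{Poinc},
\begin{equation*}
|\langle \mathcal{L}, (\xi, \{\phi_j\})\rangle| \leq C\bigl(\|\mathbf{E}\|_{L^2(G)} + \|\mathbf{f}^*\|_{L^2(G)}\bigr) \|(\xi,\{\phi_j\})\|_\ep.
\end{equation*}
Lax--Milgram then delivers a unique solution satisfying the energy bound $\|(\mathbf{u}^\ep,\{\Phi_j^\ep\})\|_\ep \leq C(\|\mathbf{E}\|_{L^2(G)}+\|\mathbf{f}^*\|_{L^2(G)})$. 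A second use of Lemma \ref{Poinc} upgrades this to $\|\mathbf{u}^\ep\|_{L^2}\leq C\ep\|\nabla\mathbf{u}^\ep\|_{L^2}$, yielding (\ref{AprioriVelocity}); the bound (\ref{AprioriPotential}) follows from the standard Poincar\'e inequality since $\Phi_j^\ep=0$ on $\partial G$.

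The main conceptual obstacle is the incompressibility constraint, which I sidestep by posing the variational formulation directly on the divergence-free space $\mathcal{H}^\ep$, so no inf--sup argument is required; the pressure $P^\ep$ is recovered a posteriori by de~Rham's theorem applied to the momentum equation. The only delicate technical point is the $\ep$- and $\omega$-independence of all constants, which reduces to the fact that the Poincar\'e constant in Lemma \ref{Poinc} is genuinely deterministic under conditions {\bf R1.--R5.}, and that the bounds on $n^j_\ep$ inherit determinism from the $L^\infty(\Omega)$ bound on $\Psi^0$ established in Theorem \ref{Limiteqequilib}.
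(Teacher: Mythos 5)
Your proposal is correct and follows essentially the same route as the paper: antisymmetry of the cross-coupling term under the diagonal test choice, the deterministic lower bound $n^j_\ep\geq c_1>0$ inherited from the $L^\infty$ bound on $\Psi^0$, Lax--Milgram on the divergence-free space, and the Poincar\'e inequality of Lemma \ref{Poinc} to convert the energy bound into (\ref{AprioriVelocity})--(\ref{AprioriPotential}). The paper's proof is just a terser version of the same argument.
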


\begin{proof}
The Cauchy-Schwartz inequality yields continuity of the bilinear form $a$ and the linear form $\mathcal{L}$  on $\mathcal{H}^\ep \times (H^1 (G_f^\ep (\omega) )/\mathbb{R})^N$. Furthermore for $\xi =\mathbf{u}^\ep$ and $\phi_j =\Phi_j^\ep$, we find out that the second integral (the cross-term) in the definition of $a$ cancels.
%In fact one can prove that this term is antisymmetric. Hence,
Next, because of the $L^\infty$-bounds on $\Phi^{0, \ep}$,  $n^j_\ep\geq C>0$, for a deterministic constant $C$, and the bilinear form $\displaystyle a( ( \mathbf{u}^\ep , \{ \Phi_j^\ep \}_{1\leq j\leq N}) , ( \mathbf{u}^\ep , \{ \Phi_j^\ep \}_{1\leq j\leq N} ) )$ is  $\mathcal{H}^\ep \times (H^1 (G_f^\ep (\omega) )/\mathbb{R})^N$-elliptic.  Now, the Lax-Milgram lemma implies existence and uniqueness of solution of  problem (\ref{VAREP}).\vskip4pt
The a priori estimates (\ref{AprioriVelocity})-(\ref{AprioriPotential}) follow by testing the problem (\ref{VAREP}) by the solution, using the $L^\infty$-estimate for $\Psi^0$ and using Poincar\'e's inequality (\ref{eq.poincare}).
\end{proof}

As in Subsection \ref{PerioPB}, to simplify the presentation we use an extension operator from
the perforated domain $G_f^\ep (\omega)$ into $\O$ (although
it is not necessary). Using hypothesis {\bf R1.-R4.}, in analogy with the periodic case, (studied  for instance in Acerbi et al \cite{ACPDMP92}, Cionarescu \& Saint-Jean-Paulin \cite{CioSJP79} and Jikov et al \cite{JKO94}), there exists  an extension operator
$T^\ep$ from $H^1(G_f^\ep (\omega))$ in $H^1(G)$ satisfying
$T^\ep \phi|_{G_f^\ep (\omega)}=\phi$ and the inequalities
$$
\|T^\ep\phi\|_{L^2(G)}\le C\|\phi\|_{L^2(G_f^\ep (\omega))},\,
\|\nabla(T^\ep\phi)\|_{L^2(G)}\le C\|\nabla\phi\|_{L^2(G_f^\ep (\omega))}
$$
with a deterministic constant $C$ independent of $\ep$, for any $\phi\in H^1(G_f^\ep (\omega))$.
We keep for the extended function $T^\ep \Phi^\ep_j$ the same notation $\Phi^\ep_j$.

We extend $\mathbf{u}^\ep$ by zero in
$G\backslash G_f^\ep (\omega)$. It is well known that extension by
zero preserves $L^q$ and $W^{1,q}_0$ norms for $1<q<\infty$.
Therefore, we can replace $G_f^\ep (\omega)$ by $G$ in estimate
(\ref{AprioriVelocity}).

The pressure field $P^\ep$ is reconstructed using de Rham's theorem, see Temam \cite{TemamNS}.
It is thus unique up to an additive constant.
The {\it a priori} estimates for the pressure are not easy to obtain and in the case of periodic porous media require Tartar's construction from \cite{Ta1980} (see also Allaire \cite{ALL89} or  Allaire \cite[Sec 3.1.3]{All97}). Here we deal with a random porous medium and the pressure extension was constructed only for checkerboard type random domains in Beliaev \& Kozlov \cite{BK96}.
 Nevertheless, assumptions {\bf R1.-R4.} allow to construct a "security domain" $Y_j (\omega)$ of the fixed deterministic size surrounding every $M_j (\omega)$, $j\in \mathcal{J} (\ep)$. It is such that its distance to neighboring solid inclusions $M_{\ell}$ is bigger than a strictly positive deterministic constant. Then we repeat Tartar's construction  of the restriction operator, developed originally for periodic porous media  (see Allaire \cite{All97}), for every $j\in \mathcal{J} (\ep)$. Next, by gluing all the pieces, the restriction operator is defined as a continuous operator $R : H^1_0 (\frac{1}{\ep }G)^n \to H^1_0 (G_f (\omega))^n$. Note that if div $\varphi =0$ in $G/\ep$, then div $(R_\ep \varphi)=0$ in $G_f (\omega)$. Rescaling in exactly the same way as in the periodic case yields the restriction operator $R_\ep : H^1_0 (G)^n \to H^1_0 (G_f^\ep (\omega))^n$, such that div $\varphi =0$ in $G$ implies div $(R_\ep \varphi)=0$ in $G_f^\ep (\omega)$. $\nabla P^\ep$ is then extended using duality, as in the periodic case, and an  extended pressure is ${\tilde P}^\ep$ is obtained and the following estimate holds
\begin{equation}\label{Estpressgrad}
 | \langle \nabla {\tilde P}^\ep , \varphi \rangle_{H^{-1} (G), H^1_0 (G)}|\leq ( ||\varphi ||_{L^2(G)^n} + \ep ||\nabla \varphi ||_{L^2(G)^{n^2}} ), \quad \forall \varphi \in H^1_0 (G).
\end{equation}
Furthermore, a slight modification of the argument from  Avellaneda \& Lipton \cite{LA} gives that
 the pressure extension ${\tilde P}^\ep$ is given by
\begin{equation}\label{1.27}
\tilde P^\ep=
\left\{
\begin{array}{ll}
    \dsp P^\ep & \hbox{ in } G_f^\ep (\omega),  \\
  \dsp  \frac{1}{\vert \ep Y_{i} (\omega)\vert}\,\int_{\ep Y_{i} (\omega) }\,P^\ep &
\hbox{ in }  \ep Y_i (\omega), %\quad\hbox{  }i,
  \end{array}
\right.
\end{equation}
for each $i\in \mathcal{J} (\ep) $. The results are summarized in
\begin{lemma} \label{C1.6}
Let $\tilde P^\ep$ be defined by (\ref{1.27}). Then (a.s.) in $\omega$ it
satisfies the estimates
\begin{gather}
\Vert {\tilde P}^\ep - \frac{1}{
\vert G \vert} \int_{G} {\tilde P}^\ep  dx \Vert_{L^2 (G)}
 \leq C \bigg( || \mathbf{E}  ||_{L^2 (G )^n} + ||  \mathbf{f}^* ||_{L^2 (G )^n} \bigg) , \notag \\
    \Vert \nabla {\tilde P}^\ep\Vert_{ H^{-1} (G)^n} \leq C \bigg( || \mathbf{E}  ||_{L^2 (G )^n} + ||  \mathbf{f}^* ||_{L^2 (G )^n} \bigg) .
\notag
\end{gather}
%Furthermore, because of estimate (\ref{Estpressgrad}), the sequence $\{ {\tilde p}^\ep - %\frac{1}{ \vert G \vert } \int_{G} {\tilde
%p}^\ep \}$ is strongly relatively compact in $L^2 (G)$.
\end{lemma}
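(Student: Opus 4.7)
The plan is to extract both inequalities directly from the duality bound \eqref{Estpressgrad} on the fixed bounded Lipschitz domain $G$, by means of two deterministic ingredients: Poincar\'e's inequality on $H^1_0(G)$ and the Ne\v{c}as inequality $\|q-\bar q\|_{L^2(G)}\leq C_G\|\nabla q\|_{H^{-1}(G)^n}$ (also known as the De Rham--Lions lemma). Implicitly I read \eqref{Estpressgrad} with the constant $C(\|\mathbf{E}\|_{L^2(G)}+\|\mathbf{f}^*\|_{L^2(G)})$ on its right-hand side, which is what the sketch preceding it actually produces.

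For the gradient bound in $H^{-1}(G)^n$, Poincar\'e's inequality on the bounded domain $G$ gives $\|\varphi\|_{L^2(G)^n}\leq C_G\|\nabla\varphi\|_{L^2(G)^{n^2}}$ for every $\varphi\in H^1_0(G)^n$. Inserting this into \eqref{Estpressgrad} absorbs the $\|\varphi\|_{L^2}$ contribution into the $\|\nabla\varphi\|_{L^2}$ one, so that
\[
|\langle \nabla\tilde P^\varepsilon,\varphi\rangle_{H^{-1},H^1_0}|\leq C\bigl(\|\mathbf{E}\|_{L^2(G)}+\|\mathbf{f}^*\|_{L^2(G)}\bigr)\,\|\nabla\varphi\|_{L^2(G)^{n^2}},
\]
which is the second estimate by definition of the dual norm. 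For the bound on $\tilde P^\varepsilon$ itself, I would note that $\tilde P^\varepsilon$ lies a.s.\ in $L^2(G)$ by its explicit construction \eqref{1.27} (the genuine pressure on $G_f^\varepsilon(\omega)$ and a constant fluid average on each cell $\varepsilon Y_i(\omega)$). The Ne\v{c}as inequality on $G$ applied to $q=\tilde P^\varepsilon$, combined with the $H^{-1}$-bound just proved, yields the first estimate with a deterministic constant $C_G$.

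The substantive step is upstream, namely the derivation of \eqref{Estpressgrad} itself; I expect this to be the only real obstacle. It is obtained by testing the momentum equation \eqref{LIN2} against $R_\varepsilon\varphi$ for $\varphi\in H^1_0(G)^n$, exploiting that $R_\varepsilon$ preserves the divergence-free constraint when combined with the modified pressure, and bounding the resulting integrals by Cauchy--Schwarz together with \eqref{AprioriVelocity}, \eqref{AprioriPotential}, the uniform $L^\infty$-bound on $n_\varepsilon^j$ inherited from the $L^\infty$-bound on $\Psi^0$ in Theorem \ref{Limiteqequilib}, and the deterministic continuity of the restriction operator $\varepsilon\|\nabla R_\varepsilon\varphi\|_{L^2}+\|R_\varepsilon\varphi\|_{L^2}\leq C(\varepsilon\|\nabla\varphi\|_{L^2}+\|\varphi\|_{L^2})$. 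Since the construction of $R_\varepsilon$ and the security domains $Y_j(\omega)$ relies only on conditions \textbf{R1.--R4.}, every inherited constant is deterministic, and therefore the constants $C$ appearing in Lemma \ref{C1.6} are deterministic as well, as the statement requires.
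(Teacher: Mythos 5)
Your proposal is correct and follows exactly the route the paper intends: the paper gives no separate proof of Lemma \ref{C1.6} but presents it as a summary of the preceding construction of $R_\ep$ and the duality estimate (\ref{Estpressgrad}), from which the $H^{-1}$ bound follows by Poincar\'e's inequality on $H^1_0(G)$ and the $L^2$ bound by the Ne\v{c}as/de Rham inequality on the fixed domain $G$. You are also right that the right-hand side of (\ref{Estpressgrad}) as printed is missing the factor $C\big(\Vert\mathbf{E}\Vert_{L^2(G)^n}+\Vert\mathbf{f}^*\Vert_{L^2(G)^n}\big)$, which is what the testing of (\ref{LIN2}) against $R_\ep\varphi$ together with (\ref{AprioriVelocity})--(\ref{AprioriPotential}) actually produces.
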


Using the a priori estimates and the notion of two-scale convergence,
we are able to prove our main convergence result.
% for the solutions
%of system (\ref{EPPR1})-(\ref{EPPR5}).

\begin{theorem}
\label{1.15} Let  us assume {\bf R1.--R5.}
Let {%\color{blue}
%$n_\ep^j$ be given by (\ref{NJ0}),
$n_j^0 = n_j^c \exp \{ -z_j \Psi^0 \}$} and
$\{ \mathbf{u}^\ep , \{ \Phi_j^\ep \}_{j=1, \dots , N} \}$
be the variational solution of (\ref{VAREP}).
We extend the velocity $\mathbf{u}^\ep$ by zero in $G \setminus G_f^\ep (\omega)$ and the pressure $P^\ep$
by ${\tilde P}^\ep$, given by (\ref{1.27}) and normalized by $\int_{G \setminus G_f^\ep (\omega)} \tilde P^\ep =0$.
Then there exist limits $(\mathbf{u}^0 , P^0) \in V \times L^2_0 (G)$
and $\{ \Phi_j^0 , \Phi_j^1  \}_{j=1, \dots , N} \in \left(H^1_0 (G)\times L^2(G ; X)\right)^N$
such that the following convergences hold
\begin{gather}
 \mathbf{u}^\ep \to  \mathbf{u}^0 (x,\omega)  \qquad \hbox{ in the stochastic two-scale sense } \label{1.69} \\
\ep \nabla \mathbf{u}^\ep  \to  \nabla_\omega \mathbf{u}^0
 (x, \omega)  \qquad \hbox{ in the stochastic two-scale sense }
\label{1.70}\\
  {\tilde P}^\ep
 \to  P^0 (x)  \hbox{ strongly in } \, L^2_0 (G), \; \mbox{(a.s.) in} \; \omega, \label{1.71} \\
\Phi_j^\ep  \to  \Phi_j^0(x) \ \hbox{in the stochastic two-scale sense } \,
%\mbox{ and  }  L^2 (\Omega)
\label{convPhi}\\
 \chi_{G_f^\ep (\omega)} \nabla \Phi_j^\ep  \to \chi_{\mathcal{F}} (\omega)  \{ \nabla_x \Phi_j^0(x) + \Phi_j^1(x, \omega) \} \,
\hbox{ in the stochastic two-scale sense. }
\label{convgradPhi}
%n_j^{0,\ep} \to n_j^0(x, \omega) =n_j^c \exp \{ -z_j \Psi^0 (x, \omega) \}  \ \mbox{ and } \ %\Psi^\ep \to \Psi^0(x, \omega) \
%\hbox{ in the stochastic two-scale sense.}
%\label{convnPsi0}
\end{gather}
In addition,   for $  j=1, \dots , N$,
\begin{equation}\label{propert}
     \chi_{\mathcal{M}} (\omega) \Phi_j^1(x, \omega) =0,  \quad \chi_{\mathcal{M}} (\omega) \mathbf{u}^0 (x,\omega) =0 \quad \mbox{and} \quad P^0 (x, \omega) =P^0 (x) \quad \mbox{a.e. on } \; G\times \Omega.
\end{equation}
Furthermore, $(\mathbf{u}^0 , P^0,\{ \Phi_j^0 , \Phi_j^1 \}_{j=1, \dots , N})$ is the unique solution
of the two-scale homogenized problem
\begin{gather}
- \Delta_\omega \mathbf{u}^0(x,\omega) + \nabla_\omega p^1(x,\omega) = - \nabla_x P^0 (x) - \mathbf{f}^*(x) \notag \\
+\sum_{j=1}^N z_j n_j^0 (\omega)(\nabla_x \Phi_j^0(x) +  \Phi_j^1(x,\omega) + \mathbf{E} (x))
\ \mbox{ in } \, G\times \mathcal{F} , \label{Stokes1} \\
\div_\omega \mathbf{u}^0 (x,\omega) =0 \ \mbox{ in } \, G \times \mathcal{F} , \ \mathbf{u}^0 (x, \omega) =0 \ \mbox{on } \, G \times \mathcal{M} , \label{Stokes2}\\
\div_x \left(\mathbb{E} (\mathbf{u}^0 ) \right) =0 \, \mbox{ in } G, \quad \mathbb{E} (\mathbf{u}^0 ) \cdot \nu =0 \, \mbox{and} \; \Phi_j^0=0 \mbox{ on } \p G,\label{VAREP51}\\
- \div_\omega \Big( n^0_j (\omega) \big(  \Phi^1_j(x,\omega)  + \nabla_x \Phi_j ^0 (x) +  \mathbf{E} (x)  + \frac{\pej}{z_j} \mathbf{u}^0 \big) \Big) =0  \; \mbox{ in } G\times \mathcal{F} ,\label{Diff1}\\
{%\color{blue}
\quad \mbox{curl}_\omega \ \Phi^1_j =0  \; \mbox{ in } G\times \mathcal{F} ,
%\mathbb{E}\big( n^0_j (\omega) ( \Phi^1_j  + \nabla_x \Phi_j ^0  + \mathbf{E})  \big) \cdot \nu =
%\Phi_j ^0 =0 \ \mbox{ on } \partial G,
\label{Diff111}} \\
  n^0_j (\omega) ( \Phi^1_j  + \nabla_x \Phi_j ^0  + \mathbf{E}) =0 \quad  \mbox{ in } \; G \times \mathcal{M}, \label{Diff2} \\
-\div_x  \mathbb{E} ( n^0_j \big( s \Phi^1_j  + \nabla_x \Phi_j^0 +  \mathbf{E} (x)  +\frac{\pej}{z_j} \mathbf{u}^0 \big)  ) =0  \; \mbox{in } \, G  ,\label{Diffg2}
\end{gather}
for  $j=1, \dots , N$.
\end{theorem}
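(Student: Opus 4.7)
The plan is to carry out a classical stochastic two-scale argument in four stages: extract limits, identify relations on the fast variable, pass to the limit in the coupled variational formulation, and prove uniqueness of the homogenized problem to upgrade subsequential convergence to full convergence.

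First, I would use the a priori bounds from Proposition \ref{APRIORI1} together with Lemma \ref{C1.6} and the extension operators $T^\ep$ to obtain uniform bounds on $\mathbf{u}^\ep$ and $\ep\nabla\mathbf{u}^\ep$ in $L^2(G)$, on $\Phi_j^\ep$ in $H^1(G)$, and on $\tilde P^\ep$ in $L^2_0(G)$ with $\nabla\tilde P^\ep$ bounded in $H^{-1}(G)$. Proposition \ref{twosccompact} then delivers, along a subsequence, a limit $\Phi_j^0(x)\in H^1_0(G)$ and a corrector $\Phi_j^1(x,\omega)\in L^2(G;X)$ vanishing on $\mathcal{M}$, producing (\ref{convPhi})--(\ref{convgradPhi}), and a limit $\mathbf{u}^0(x,\omega)$ vanishing on $\mathcal{M}$ with $\nabla_\omega\mathbf{u}^0$ as the stochastic two-scale limit of $\ep\nabla\mathbf{u}^\ep$, that is (\ref{1.69})--(\ref{1.70}). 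For the pressure, since $\tilde P^\ep$ is bounded in $L^2_0(G)$ and $\nabla \tilde P^\ep$ in $H^{-1}(G)$, the classical Lions--Tartar argument yields strong relative compactness of $\tilde P^\ep$ in $H^{-1}(G)$; combined with the $L^2$-bound one gets strong convergence in $L^2_0(G)$ to a limit $P^0(x)$ independent of $\omega$. Finally, from Theorem \ref{Limiteqequilib} and the uniform $L^\infty$-bound on $\Psi^0$, the replaced coefficients $n^j_\ep(x)=n_j^c\exp\{-z_j\Psi^0(\mathcal{T}(x/\ep)\omega)\}$ stochastically two-scale converge to $n_j^0(\omega)=n_j^c\exp\{-z_j\Psi^0(\omega)\}$ and are pinched between two deterministic positive constants.

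Next, I identify the structural constraints. Testing the weak form of $\div\mathbf{u}^\ep=0$ with $\ep\varphi(x)\zeta(\mathcal{T}(x/\ep)\omega)$ and with $\varphi(x)$ yields $\div_\omega\mathbf{u}^0=0$ in $G\times\mathcal{F}$ and $\div_x\mathbb{E}(\mathbf{u}^0)=0$ in $G$, together with the Neumann condition $\mathbb{E}(\mathbf{u}^0)\cdot\nu=0$ on $\p G$. Testing the equation for $\Phi_j^\ep$ with $\ep\phi_j^1(x,\mathcal{T}(x/\ep)\omega)$ for $\phi_j^1\in C_0^\infty(G;\mathcal{D}(\Omega))$ with $\phi_j^1=0$ on $\mathcal{M}$ yields the potential nature of $\Phi_j^1$ on $\mathcal{F}$ (giving $\mathrm{curl}_\omega\Phi_j^1=0$) and the cell equation (\ref{Diff1}); condition (\ref{Diff2}) is a restatement of $\Phi_j^1=0$ and $\mathbf{u}^0=0$ on $\mathcal{M}$ together with the definitions.

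Then I pass to the limit in the coupled variational identity (\ref{VAREP}). I use test functions of the form $\xi^\ep(x)=\xi_0(x,\mathcal{T}(x/\ep)\omega)$ with $\xi_0\in C_0^\infty(G;\mathcal{D}(\Omega))^n$, $\xi_0=0$ on $\mathcal{M}$ and $\div_\omega\xi_0=0$ (the density coming from Remark \ref{rem_3}), and $\phi_j^\ep(x)=\phi_j^0(x)+\ep\phi_j^1(x,\mathcal{T}(x/\ep)\omega)$ with $\phi_j^0\in C_0^\infty(G)$ and $\phi_j^1\in C_0^\infty(G;\mathcal{D}(\Omega))$ vanishing on $\mathcal{M}$. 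Each term of $\mathbf{a}$ and $\mathcal{L}$ has a stochastic two-scale limit thanks to the admissibility of $n_j^0(\omega)$ and the convergences above. The strong $L^2$ convergence of $\tilde P^\ep$ absorbs the pressure term and produces $-\nabla_x P^0$ on the right-hand side of (\ref{Stokes1}) after an integration by parts that distributes the macroscopic gradient across the solenoidal test field. Varying $\xi_0, \phi_j^0, \phi_j^1$ in turn yields the system (\ref{Stokes1})--(\ref{Diffg2}) in its weak form, the macroscopic diffusion-convection equation (\ref{Diffg2}) arising by choosing $\phi_j^1=0$ and integrating in $\omega$.

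The main obstacle, and final step, will be uniqueness of the two-scale problem, because only uniqueness allows to remove the subsequence and conclude convergence of the whole family. For this, I would test (\ref{Stokes1})--(\ref{Diffg2}) against $(\mathbf{u}^0,\{\Phi_j^0+\Phi_j^1\})$, weighted as in the $\ep$-problem by $z_j^2/\pej$. The antisymmetric coupling term between $\mathbf{u}^0$ and $\Phi_j^1$ cancels by construction, and the remaining quadratic form is coercive thanks to (i) the strict positivity of $n_j^0(\omega)$ inherited from the $L^\infty$-bound on $\Psi^0$ in Theorem \ref{Limiteqequilib}, (ii) the Poincaré-type inequality on $\mathcal{F}$ for functions vanishing on $\mathcal{M}$ (hypotheses \textbf{R1.}--\textbf{R5.}, as in Lemma \ref{Poinc} passed to the stochastic two-scale limit), and (iii) the positive-definiteness of the homogenized Neumann tensor $\mathcal{A}_N^0$ from (\ref{N_matr}), which controls $\nabla_x\Phi_j^0+\Phi_j^1$. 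This yields uniqueness, hence convergence of the whole sequence, completing the theorem. The delicate technical point I anticipate is the construction and admissibility of solenoidal two-scale test fields vanishing on $\mathcal{M}$ with the regularity needed to plug into the Stokes term; this is precisely where the disperse-grain assumptions \textbf{R1.}--\textbf{R5.} must be exploited.
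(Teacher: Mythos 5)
Your outline follows the same route as the paper's proof: a priori bounds plus stochastic two-scale compactness, limit passage in the coupled variational formulation with oscillating test functions $\xi(x,\mathcal{T}(x/\ep)\omega)$ and $\varphi_j(x)+\ep\gamma_j(x,\mathcal{T}(x/\ep)\omega)$, and uniqueness of the two-scale limit problem via coercivity of the quadratic form (cancellation of the antisymmetric coupling, strict positivity of $n_j^0$). Two points, however, are genuine gaps rather than omitted routine details.

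First, the pressure convergence. Your claim that relative compactness of $\tilde P^\ep$ in $H^{-1}(G)$ ``combined with the $L^2$-bound'' gives strong convergence in $L^2_0(G)$ is false as stated: a sequence bounded in $L^2$ and converging strongly in $H^{-1}$ need not converge strongly in $L^2$ (think of oscillating functions). The correct mechanism, which the paper inherits from Tartar's construction via the restriction operator $R_\ep$, goes through the momentum equation: each term of $\nabla\tilde P^\ep$ converges strongly in $H^{-1}(G)^n$ (in particular $\ep^2\Delta\mathbf{u}^\ep=\ep\,\div(\ep\nabla\mathbf{u}^\ep)\to0$ there, and the electrochemical source terms converge weakly in $L^2$ hence strongly in $H^{-1}$), and Ne\v{c}as' inequality $\|q\|_{L^2_0}\le C\|\nabla q\|_{H^{-1}}$ then upgrades this to strong $L^2_0$-convergence of $\tilde P^\ep$. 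Second, and more importantly, your uniqueness argument covers only $(\mathbf{u}^0,\{\Phi_j^0,\Phi_j^1\})$. The macroscopic pressure $P^0$ is the Lagrange multiplier of the constraint $\div_x\mathbb{E}(\mathbf{u}^0)=0$, and its uniqueness does not follow from coercivity; one needs a surjectivity (inf--sup) statement for the averaged divergence, i.e. for every $\eta\in L^2_0(G)$ a two-scale field $\Theta$, solenoidal in $\omega$ and vanishing on $\mathcal{M}$, with $\div_x\mathbb{E}\{\Theta\}=\eta$. The paper constructs this in Lemma \ref{pressrec} from the auxiliary problems (\ref{injperm}) and the positive definite matrix $K_q$. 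Without it you can conclude neither that the whole sequence $\tilde P^\ep$ converges nor that the two-scale problem is uniquely solvable, as the theorem asserts. A third, smaller omission: recovering the strong form (\ref{Stokes1}) with the microscopic pressure $p^1(x,\omega)$ from the weak formulation tested against $\omega$-solenoidal fields requires a de Rham-type theorem in the probability space (the paper invokes Corollary 2.7 of \cite{wright}); this is not automatic in the stochastic setting and should be cited or proved.
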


\begin{remark}Following the terminology of Allaire \cite{All97}, the limit problem introduced in Theorem \ref{1.15} is called the two-scale, two-pressure homogenized problem. It is well posed because the two incompressibility constraints
(\ref{Stokes2}) and (\ref{VAREP51}) are exactly dual to the two pressures $P^0(x)$
and $p^1(x, \omega)$ which are their corresponding Lagrange multipliers.

 The separation of scales from the above two-scale limit problem and
extracting the purely macroscopic homogenized problem will be done later
in Proposition \ref{prop.eff}, Section \ref{sec4}.\end{remark}

The proof of Theorem \ref{1.15}  will follow from several auxiliary lemmas.

We start by rewriting the variational formulation (\ref{VAREP}) with
a velocity test function which is not divergence-free, so we can still
take into account the pressure
\begin{gather}
\ep^2 \int_{G_f^\ep (\omega)} \nabla \mathbf{u}^\ep : \nabla \xi \, dx -
\int_{G_f^\ep (\omega)} P^\ep \, \mbox{div} \, \xi \, dx + %\notag \\
 \sum_{j=1}^N \int_{G_f^\ep (\omega)} z_j  \big( -\xi \cdot  \nabla \Phi_j ^\ep + \mathbf{u}^\ep \cdot\nabla \phi_j \big)n^j_\ep \ dx +\notag \\  \sum_{j=1}^N \frac{z_j^2}{\pej} \int_{G_f^\ep (\omega)} n^j_\ep \nabla \Phi_j^\ep \cdot \nabla \phi_j \ dx =
- \sum_{j=1}^N \frac{z_j^2}{\pej} \int_{G_f^\ep (\omega)} n^j_\ep  \mathbf{E}  \cdot \nabla \phi_j \ dx \notag \\
+\sum_{j=1}^N \int_{G_f^\ep (\omega)} z_j n_j ^\ep \mathbf{E}  \cdot \xi \, dx
- \int_{G_f^\ep (\omega)} \mathbf{f}^*\cdot\xi \, dx , \label{VAREP1}
\end{gather}
for any test functions $\xi \in H^1_0 (G_f^\ep (\omega))$ and $\phi_j \in W^\ep$,  $1\leq j\leq N$. Of course, one keeps the divergence constraint $\div \mathbf{u}^\ep =0$ in $G_f^\ep (\omega)$.
Next we define the two-scale test functions:
\begin{gather}
\xi^\ep (x) = \xi ( x, \mathcal{T} (\frac{x}{\ep}\omega) ), \ \xi \in C^\infty_{0} (G ; \mathcal{D} (\Omega)^n) ,  \,
 \ \xi =0 \; \mbox{ on } \; G \times \mathcal{M}  , \, \mbox{div}_\omega \xi (x,\omega) =0 \; \mbox{ on } \; G\times \mathcal{F}, \label{Ksi}\\
\phi_j^\ep = \varphi_j (x) + \ep \gamma_j (x, \mathcal{T} (\frac{x}{\ep})\omega ) , \ \varphi_j  \in C^\infty_{0} (G),
  \, \gamma_j \in C^\infty_{0} (G ; \mathcal{D} (\Omega)), \; j=1, \dots N.\label{phij}
\end{gather}
%Recalling that $n^j_\eps(x)=n^0_j(x/\ep )$ is like a two-scale test function,
%we can pass to the limit in (\ref{VAREP1}), along the same lines as in the seminal
%papers \cite{All92} or \cite{All97}.

\begin{lemma}\label{pressnonosc} Let us suppose the assumptions of Theorem \ref{1.15} and convergences (\ref{1.69})-(\ref{convgradPhi}). Then any cluster point $\{ \mathbf{u}^0 , P^0 \}$ satisfies (\ref{propert}).
\end{lemma}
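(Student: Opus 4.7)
The plan is to treat the three assertions in \eqref{propert} separately: the vanishing of $\mathbf{u}^0$ and $\Phi_j^1$ on $\mathcal{M}$ is essentially built into the two-scale compactness result from Proposition~\ref{twosccompact}, while the $\omega$-independence of $P^0$ requires a specific test function argument in the momentum equation \eqref{VAREP1}. First I would use Lemma~\ref{C1.6} together with the stochastic two-scale compactness theorem from Bourgeat et al \cite{BMW} to extract a subsequence along which $\tilde P^\ep$ converges stochastically two-scale in the mean to some $P^0(x,\omega) \in L^2(G\times\Omega)$.

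The identity $\chi_\mathcal{M}(\omega)\mathbf{u}^0(x,\omega) = 0$ is immediate from the fact that $\mathbf{u}^\ep \equiv 0$ on $G_m^\ep(\omega)$: for any admissible $\psi(x,\omega)$, passing to the limit in
$$\int_\Omega\!\!\int_G \chi_{G_m^\ep(\omega)}(x)\,\mathbf{u}^\ep(x)\cdot\psi(x,\mathcal{T}(x/\ep)\omega)\,dx\,d\mu = 0$$
yields $\int_G\int_\mathcal{M} \mathbf{u}^0\cdot\psi\,dx\,d\mu = 0$. Similarly, by the explicit form of the two-scale limit for gradients established in Proposition~\ref{twosccompact}, $\Phi_j^1$ lies in $L^2(G;X)$ with $X$ the closure of $\mathcal{V}^2_{\mathrm{pot}}(\Omega)$ in $L^2(\Omega\setminus\mathcal{M})^n$, so $\chi_\mathcal{M}\Phi_j^1 = 0$ by construction.

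The crucial step is the $\omega$-independence of $P^0$. I would test \eqref{VAREP1} with $\phi_j \equiv 0$ and with a highly oscillating, small-in-amplitude velocity test function $\xi^\ep(x) = \ep\,\xi(x,\mathcal{T}(x/\ep)\omega)$, where $\xi \in C_0^\infty\bigl(G;\mathcal{D}(\Omega)^n\bigr)$ is required to vanish on $G\times\mathcal{M}$ but is \emph{not} divergence-free in $\omega$. Then $\|\xi^\ep\|_{L^\infty(G)} = O(\ep)$, while $\nabla\xi^\ep = (\nabla_\omega\xi)|_{\mathcal{T}(x/\ep)\omega} + O(\ep)$ in $L^\infty(G)$. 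Using the a priori bounds on $\ep\nabla\mathbf{u}^\ep$ and $\nabla\Phi_j^\ep$ in $L^2$ together with the uniform $L^\infty$-bound on $n^j_\ep$ inherited from Theorem~\ref{PoissBoleq}, the viscous term, the electrokinetic cross term $-\sum_j z_j\int \xi^\ep\cdot\nabla\Phi_j^\ep\,n^j_\ep\,dx$, and the whole right-hand side of \eqref{VAREP1} are all of order $O(\ep)$. Only the pressure term survives, and since $\mathrm{div}\,\xi^\ep = (\mathrm{div}_\omega\xi)|_{\mathcal{T}(x/\ep)\omega} + \ep(\mathrm{div}_x\xi)|_{\mathcal{T}(x/\ep)\omega}$, passing to the two-scale limit yields
$$\int_G\!\!\int_\mathcal{F} P^0(x,\omega)\,\mathrm{div}_\omega\xi(x,\omega)\,dx\,d\mu = 0$$
for every admissible $\xi$ vanishing on $\mathcal{M}$. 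By density of such $\xi$, this means $\nabla_\omega P^0 = 0$ on $\mathcal{F}$ in the distributional sense, and Remark~\ref{rem_3} then gives $P^0(x,\omega) = P^0(x)$ on $\mathcal{F}$.

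The main obstacle is this last inference: one must pass from an orthogonality relation $\int P^0\,\mathrm{div}_\omega\xi\,d\mu = 0$ to pointwise $\omega$-independence, which relies critically on the connectedness of $F(\omega)$ and on the density of $\mathcal{D}(\Omega)$-functions vanishing on $\mathcal{M}$ in $L^2(\mathcal{F})$, both ensured by hypotheses R1--R4 via Remark~\ref{rem_3}. One must also check that $\xi^\ep$ is a legitimate element of $H^1_0(G_f^\ep(\omega))$: compact support of $\xi$ in $G$ handles the macroscopic boundary $\partial G$, while the condition $\xi|_{\mathcal{M}} = 0$ guarantees the vanishing of $\xi^\ep$ on the pore-grain interfaces.
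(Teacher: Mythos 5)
Your proposal is correct and follows essentially the same route as the paper: the vanishing of $\mathbf{u}^0$ (and of $\Phi_j^1$) on $\mathcal{M}$ is read off from the support of $\mathbf{u}^\ep$ and from the structure of the two-scale limit in Proposition \ref{twosccompact}, while the $\omega$-independence of $P^0$ is obtained by testing (\ref{VAREP1}) with $\xi^\ep=\ep\,\xi(x,\mathcal{T}(x/\ep)\omega)$, $\phi_j=0$, keeping only the pressure term in the limit, and invoking Remark \ref{rem_3} (Wright's Lemma 2.4). You are also right that for this step $\xi$ must \emph{not} be constrained to be $\mathrm{div}_\omega$-free --- the paper's literal reference to (\ref{Ksi}) is a slip, since otherwise the identity $\int_G\int_\Omega P^0\,\mathrm{div}_\omega\xi\,d\mu\,dx=0$ would be vacuous.
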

\begin{proof} If we take $\xi$ which is with support in $\mathcal{M}$, then passing to the two-scale limit immediately gives $\chi_{\mathcal{M}} (\omega) \mathbf{u}^0 (x,\omega) =0$.
 Next we take as test function $\xi^\ep = \ep \xi ( x, \mathcal{T} (\frac{x}{\ep}\omega) )$, where $\xi$ is given by (\ref{Ksi}) and $\phi_j=0$, for each $j$, then passing to the two-scale limit gives
 $$ 0=\int_G \int_\Omega P^0 \mbox{div}_\omega \ \xi (x, \omega) \ d\mu dx.$$
  Remark \ref{rem_3} and the ergodicity assumption on $\mathcal{F}$ yields
 $P^0 (x, \omega) =P^0 (x) \quad \mbox{a.e. on } \; G\times \Omega$.  For a detailed computation see  Wright \cite[Lemma 2.4]{wright}.
\end{proof}
\begin{lemma}\label{momentumflo} Let us suppose the assumptions of Theorem \ref{1.15} and convergences (\ref{1.69})-(\ref{convgradPhi}). Then any cluster point $\{ \mathbf{u}^0 , P^0 , \{ \Phi_j^0 , \Phi_j^1 \} _{j=1, \dots N}\}$ satisfies incompressibility constraints (\ref{Stokes2})-(\ref{VAREP51}) and the variational equation
\begin{gather}
 \int_{G \times \mathcal{F} }  \nabla_\omega \mathbf{u}^0 (x,\omega) : \nabla_\omega \xi  \ dx d\mu -
\int_{G \times \mathcal{F} } P^0 (x)  \, \div_x \xi  \ dx d\mu  +\notag \\
 \sum_{j=1}^N \int_{G \times \mathcal{F}} z_j n_j^0 (\omega) \Big( -\xi (x,\omega) \cdot
(\nabla_x  \Phi_j^0 (x)+  \Phi_j^1 (x,\omega) )
 +  \mathbf{u}^0 (x,\omega) \cdot (\nabla_x \varphi_j(x) + %\nabla_\omega \gamma
 g_j (x,\omega)) \Big) \ dx d\mu \notag \\
+ \sum_{j=1}^N \frac{z_j^2}{\pej} \int_{G \times \mathcal{F} } n_j^0 (\omega) (\nabla_x \Phi_j^0 (x) +
 \Phi_j^1(x,\omega)) \cdot (\nabla_x \varphi_j  + %\nabla_\omega \gamma
 g_j )  \ dx d\mu
= \notag \\
- \sum_{j=1}^N \frac{z_j^2}{\pej} \int_{G \times \mathcal{F}} n_j^0 (\omega)  \mathbf{E} (x) \cdot (\nabla_x \varphi_j (x) +
%\notag \end{gather} \begin{gather}\nabla_\omega \gamma
 g_j (x,\omega) ) \ dx d\mu
+ \sum_{j=1}^N \int_{G \times \mathcal{F}} z_j n_j ^0 (\omega)  \mathbf{E} (x) \cdot  \xi \ dx d\mu\notag \\
- \int_{G \times \mathcal{F}} \mathbf{f}^*(x)\cdot\xi(x,\omega) \, dx d\mu ,  \label{VAREP3}
\end{gather}
for any test functions $\xi$ given by (\ref{Ksi}) and $ \phi_j $ given by (\ref{phij}). {%\color{blue}
Notice that $\nabla_\omega \gamma_j$ was replaced with the element from the corresponding closed subspace: $g_j \in L^2(G; X)$ .}
\end{lemma}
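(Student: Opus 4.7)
The plan is to substitute the two-scale test pair (\ref{Ksi})-(\ref{phij}) into the variational formulation (\ref{VAREP1}) and pass to the stochastic two-scale limit term by term, invoking (\ref{1.69})-(\ref{convgradPhi}) together with the strong convergence (\ref{1.71}) of the extended pressure. I would handle the incompressibility constraints (\ref{Stokes2})-(\ref{VAREP51}) separately, starting from $\div\mathbf{u}^\ep=0$ in $G$ (after extension by zero). Testing against $\eta\in C^\infty(\overline G)$ and using (\ref{1.69}) produces $\int_G\mathbb{E}(\mathbf{u}^0)\cdot\nabla\eta\,dx=0$, which is (\ref{VAREP51}); testing against $\ep\,\eta(x)\psi(\mathcal{T}(x/\ep)\omega)$ for $\psi\in\mathcal{D}(\Omega)$ and exploiting $\nabla(\ep\eta\psi^\ep)=\ep\psi^\ep\nabla\eta+\eta(\nabla_\omega\psi)(\mathcal{T}(x/\ep)\omega)$ gives $\int_{G\times\mathcal{F}}\eta\,\mathbf{u}^0\cdot\nabla_\omega\psi\,dx\,d\mu=0$, hence $\div_\omega\mathbf{u}^0=0$ in $G\times\mathcal{F}$; the relation $\mathbf{u}^0=0$ on $G\times\mathcal{M}$ is already recorded in (\ref{propert}).

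Next, for (\ref{VAREP3}), I would exploit the fact that, since $\xi$ vanishes on $G\times\mathcal{M}$, the oscillating test function $\xi^\ep$ is automatically supported in $G_f^\ep(\omega)$, and that $\ep^2\nabla\xi^\ep=\ep^2(\nabla_x\xi)(x,\mathcal{T}(x/\ep)\omega)+\ep(\nabla_\omega\xi)(x,\mathcal{T}(x/\ep)\omega)$. Pairing $\ep\nabla\mathbf{u}^\ep$, two-scale convergent by (\ref{1.70}), with the admissible $(\nabla_\omega\xi)(x,\mathcal{T}(x/\ep)\omega)$ yields the viscous limit $\int_{G\times\mathcal{F}}\nabla_\omega\mathbf{u}^0:\nabla_\omega\xi\,dx\,d\mu$, while the $\ep^2\nabla_x\xi$-contribution vanishes. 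Because $\div_\omega\xi=0$, the pressure integral reduces to $-\int_G\tilde P^\ep\,\div_x\xi^\ep\,dx$, and combining the strong $L^2$-convergence in (\ref{1.71}) with the two-scale convergence of the admissible factor $\div_x\xi^\ep$ produces the limit $-\int_{G\times\mathcal{F}}P^0\,\div_x\xi\,dx\,d\mu$.

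For the cross terms, the coefficient $n_\ep^j=n_j^c\exp\{-z_j\Psi^0(\mathcal{T}(x/\ep)\omega)\}$ is admissible with two-scale limit $n_j^0(\omega)$, and is bounded above and below by positive deterministic constants by the $L^\infty$-bound on $\Psi^0$. Thus $n_\ep^j\xi^\ep$ converges strongly in the two-scale sense to $n_j^0(\omega)\xi(x,\omega)$ and pairs with the weakly two-scale convergent $\chi_{G_f^\ep}\nabla\Phi_j^\ep$ via (\ref{convgradPhi}); symmetrically, $n_\ep^j\nabla\phi_j^\ep=n_\ep^j(\nabla_x\varphi_j+(\nabla_\omega\gamma_j)(x,\mathcal{T}(x/\ep)\omega))+O(\ep)$ is admissible and pairs with $\mathbf{u}^\ep$. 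The diffusive term $\sum_j(z_j^2/\pej)\int n_\ep^j\nabla\Phi_j^\ep\cdot\nabla\phi_j^\ep$ is treated identically, and all right-hand-side terms of (\ref{VAREP1}) consist of admissible factors against weakly two-scale convergent data. The replacement of $\nabla_\omega\gamma_j$ by an arbitrary $g_j\in L^2(G;X)$ is justified by the density of the former family in the latter space (cf.\ Remark \ref{rem_3} and \cite[Rem.~2.4]{BAP03}). The main technical obstacle is the orchestration of the $\ep$-scalings and the systematic matching of weak against strong two-scale convergences in each bilinear product; this is rendered tractable precisely by the substitution of $n_j^{0,\ep}$ with the stationary $n_\ep^j$ performed at the outset of Section \ref{Passlimit}, which ensures the concentration coefficients are themselves admissible test functions.
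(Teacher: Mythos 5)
Your proposal is correct and follows essentially the same route as the paper: the paper's own proof simply passes to the stochastic two-scale limit in (\ref{VAREP1}) with the oscillating test functions (\ref{Ksi})--(\ref{phij}) and obtains the incompressibility constraints by testing $\div\,\mathbf{u}^\ep=0$ against $\xi^\ep$ and $\mathbb{E}(\xi^\ep)$, which is the same duality argument you carry out by integration by parts. Your version merely spells out the term-by-term matching of admissible factors against weakly two-scale convergent sequences that the paper dismisses as passing to the limit ``without difficulty.''
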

\begin{proof} If we multiply div$\ \mathbf{u}^\ep =0$ by $\xi^\ep$, integrate over $G_f^\ep (\omega)\times \mathcal{F}$ and pass to the two-scale limit, incompressibility constraint (\ref{Stokes2}) follows immediately.

The incompressibility constraint (\ref{VAREP51}) follows analogously, but with a choice of test function $\mathbb{E} (\xi^\ep)$ and $\phi_j^\ep =0$ for each $j$.

 Using convergences (\ref{1.69})-(\ref{convgradPhi}) and by recalling that $n^j_\ep = n_j^0 (\mathcal{T} (x/ \ep) \omega)$ we pass to the two-scale limit in equation (\ref{VAREP1}) without difficulty.
\end{proof}

The next step is to prove the well-posedness of variational equation  (\ref{VAREP3}), which by uniqueness of the limit
 automatically
implies that the entire sequence $\displaystyle ( \mathbf{u}^\ep , P^\ep , \{ \Phi_j^\ep \}_{1\leq j\leq N} )$
converges.

 Let the functional space for the velocity $\mathbf{u}^0$ be given by
$$
 V= \{ \mathbf{z}^0(x,\omega) \in L^2 \left(G ; \mathcal{D} (\Omega)^n \right) \; \mbox{ satisfying }
(\ref{Stokes2})-(\ref{VAREP51}) \} ,
$$
\begin{lemma} \label{pressrec}  Let $\eta \in L^2_0 (G)$. Then there exists $\Theta \in V$ such that
\begin{equation}\label{pressx}
  \mbox{div}_x \mathbb{E} \{ \Theta \} =\eta \; \mbox{ in } \; G, \quad \mathbb{E} \{ \Theta \}\cdot \nu =0 \; \mbox{ on } \; \partial G.
\end{equation}
\end{lemma}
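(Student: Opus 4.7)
The plan is to first construct a deterministic divergence-$\eta$ field on $G$ and then lift it to the random setting via a stochastic Stokes cell problem whose expectations cover all of $\mathbb R^n$ through a positive definite permeability tensor.

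First I would solve a macroscopic Neumann problem: since $\eta\in L^2_0(G)$, let $\psi\in H^1(G)/\mathbb R$ satisfy $\Delta\psi=\eta$ in $G$ and $\nabla\psi\cdot\nu=0$ on $\partial G$, and set $\mathbf V(x):=\nabla\psi(x)\in L^2(G)^n$. Then $\mathrm{div}_x\mathbf V=\eta$ in $G$ and $\mathbf V\cdot\nu=0$ on $\partial G$, i.e.\ $\mathbf V$ is a deterministic candidate for $\EE\{\Theta\}$.

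Next, for each $e\in\mathbb R^n$ I would introduce the stochastic Stokes cell problem: find $\mathbf w_e\in\mathcal D(\Omega)^n$ with $\mathbf w_e=0$ on $\mathcal M$ and $\mathrm{div}_\omega\mathbf w_e=0$ on $\mathcal F$ satisfying, for some stochastic pressure $q_e$,
\[
-\Delta_\omega\mathbf w_e+\nabla_\omega q_e=e\ \hbox{on }\mathcal F.
\]
Well-posedness follows from Lax--Milgram applied in the space of stochastically divergence-free fields vanishing on $\mathcal M$, using the stochastic Poincar\'e-type inequality granted by assumption {\bf R5.} (the $\omega$-analogue of Lemma \ref{Poinc}, transposed from $G_f^\ep(\omega)$ to $\mathcal F$ with respect to the measure $\mu$). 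Define the permeability tensor $K\,e:=\EE\{\mathbf w_e\}$; testing the equation with $\mathbf w_e$ itself gives $K e\cdot e=\EE\{|\nabla_\omega\mathbf w_e|^2\}$, and the stochastic Poincar\'e inequality forces $\mathbf w_e=0$ only when $e=0$, so $K$ is symmetric and positive definite.

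Finally, define $\Theta(x,\omega):=\mathbf w_{K^{-1}\mathbf V(x)}(\omega)$. By linear dependence of $\mathbf w_e$ on $e$ together with the bound $\|\mathbf w_e\|_{\mathcal D(\Omega)^n}\le C|e|$, this $\Theta$ lies in $L^2(G;\mathcal D(\Omega)^n)$, vanishes on $G\times\mathcal M$, and is $\omega$-divergence-free on $G\times\mathcal F$; that is, $\Theta$ satisfies the stochastic constraints defining $V$. Taking expectation yields $\EE\{\Theta\}(x)=K\,K^{-1}\mathbf V(x)=\mathbf V(x)$, so $\mathrm{div}_x\EE\{\Theta\}=\mathrm{div}_x\mathbf V=\eta$ in $G$ and $\EE\{\Theta\}\cdot\nu=\mathbf V\cdot\nu=0$ on $\partial G$, which is \eqref{pressx}.

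The one delicate step is the positive definiteness of $K$: it relies on the stochastic Poincar\'e inequality for functions in $\mathcal D(\Omega)^n$ vanishing on $\mathcal M$, which itself requires condition {\bf R5.} ensuring that $\mathcal M$ meets every sufficiently large ball. Given this inequality, the remainder is a direct stochastic transposition of the classical periodic ``lift by permeability'' construction.
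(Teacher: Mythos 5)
Your construction follows the same overall strategy as the paper's proof: build a family of stochastic cell solutions indexed by the coordinate directions, assemble a symmetric positive definite effective tensor from their expectations, and couple it with a macroscopic Neumann problem so that the expectation of the resulting $\Theta$ has the prescribed divergence. The two genuine differences are these. First, the paper's cell problem is not the pure Stokes system: it is the variational problem (\ref{injperm}), whose bilinear form contains the zeroth-order term $\int_{\mathcal{F}}\mathbf{q}^i\cdot\psi\,d\mu$. This makes coercivity on $\mathcal{W}$ immediate and sidesteps any Poincar\'e inequality on the probability space; your version instead needs the $\omega$-analogue of Lemma \ref{Poinc} on $(\Omega,\mu)$, which the paper never states or proves (Lemma \ref{Poinc} lives on the physical domain $G_f^\ep(\omega)$). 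That inequality is indeed obtainable from {\bf R1.}--{\bf R5.} via realization-wise local Poincar\'e inequalities and the Birkhoff ergodic theorem, but it is an extra ingredient you are importing, and you should say so rather than cite Lemma \ref{Poinc} as if it transposed automatically. The resulting tensors also differ ($K$ versus $K_q$), but any symmetric positive definite tensor serves the purpose. Second, you solve $\Delta\psi=\eta$ first and then apply $K^{-1}$ pointwise, whereas the paper solves $\div_x(K_q\nabla_x q)=\eta$ directly; these are equivalent.

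One point deserves more care in your write-up: positive definiteness of $K$. From $Ke\cdot e=\mathbb{E}\{|\nabla_\omega\mathbf{w}_e|^2\}=0$ and Poincar\'e you get $\mathbf{w}_e=0$, but to conclude $e=0$ you then need $\int_{\mathcal{F}}e\cdot\psi\,d\mu\neq0$ for some $\psi\in\mathcal{W}$ whenever $e\neq0$, i.e.\ that the expectations of elements of $\mathcal{W}$ span $\mathbb{R}^n$. This is the nondegeneracy of the Darcy permeability for a connected random pore space (cf.\ Beliaev \& Kozlov \cite{BK96}); the Poincar\'e inequality alone does not deliver it. The paper's own proof leaves the same step implicit, so this is a shared gap rather than a flaw specific to your argument, but as stated your justification attributes the conclusion to the wrong tool.
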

\begin{proof}  Let $\mathcal{W} $ be the Hilbert space given by
$$ \mathcal{W} =\{ \ \mathbf{z} \in \mathcal{D} (\Omega)^n \ |\; \div_\omega \mathbf{z} =0 \; \mbox{ in } \; \mathcal{F} \quad \mbox{ and } \quad \mathbf{z} =0 \; \mbox{ on } \; \mathcal{M} \}.$$
We define the random variables $\mathbf{q}^i \in \mathcal{W}$, $i=1, \dots , n$ by
\begin{equation}\label{injperm}
  \int_{\mathcal{F}} \nabla_\omega \mathbf{q}^i : \nabla_\omega \psi \ d\mu +  \int_{\mathcal{F}}  \mathbf{q}^i  \cdot \psi \ d\mu= \int_{\mathcal{F}} \psi_i \ d\mu, \quad \forall \psi \in \mathcal{W}.
\end{equation}
Then we have
\begin{gather*}
  \mathbb{E} \{ q^j_i \} =\int_{\mathcal{F}} \nabla_\omega \mathbf{q}^i : \nabla_\omega  \mathbf{q}^j\ d\mu +  \int_{\mathcal{F}}  \mathbf{q}^i  \cdot  \mathbf{q}^j \ d\mu= \mathbb{E} \{ q^i_j \}
\end{gather*}
and for all $\lambda \in \mathbb{R}^n$
\begin{gather*}
  \sum_{i,j=1}^n \lambda_i \lambda_j \mathbb{E} \{ q^j_i \} =\int_{\mathcal{F}} | \nabla_\omega (\sum_{i=1}^n \lambda_i \mathbf{q}^i )|^2 \ d\mu +\int_{\mathcal{F}} | \sum_{i=1}^n \lambda_i \mathbf{q}^i |^2 \ d\mu .
\end{gather*}
Hence the matrix $\displaystyle K_q = \bigg[ \mathbb{E} \{ q^j_i \} \bigg]_{i,j=1, \dots , n}$ is symmetric positive definite.

Now we set $\Theta = \displaystyle \sum_{i=1}^n \mathbf{q}^i (\omega) \frac{\partial q}{\partial x_i} (x)$, where $q\in H^1 (G)\mathbb{R}$ solves the problem
\begin{equation}\label{injpermpb}
  \mbox{div}_x  \{ K_q \nabla_x q  \} =\eta \; \mbox{ in } \; G, \quad  K_q \nabla_x q \cdot \nu =0 \; \mbox{ on } \; \partial G.
\end{equation}
As $\mathbb{E} \{ \Theta \} =K_q \nabla_x q$, the Lemma is proved.

We notice the analogy with Allaire  \cite[Section 3.1.2]{All97}.
\end{proof}
\begin{proposition}\label{uniq}
Problem (\ref{VAREP3}) with incompressibility constraints
(\ref{Stokes2}) and (\ref{VAREP51}) has a unique solution
\begin{gather}
 ( \mathbf{u}^0 , P^0, \{ \Phi_j^0 , \Phi_j^1 \}_{j=1, \dots , N} ) \in %\notag \\
 V  \times L^2_0 (G)\times (H^1_0 (G) \times L^2(G; X))^N.\notag
\end{gather}
\end{proposition}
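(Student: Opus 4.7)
The plan is to eliminate the pressure in (\ref{VAREP3}) by restricting the test functions to the space $V$, which already encodes both the $\omega$-incompressibility in $\mathcal{F}$ and the $x$-incompressibility of the mean. Indeed, for $\xi \in V$ the pressure contribution collapses,
\[
-\int_{G \times \mathcal{F}} P^0(x) \, \div_x \xi(x, \omega) \, dx \, d\mu = -\int_G P^0(x) \, \div_x \mathbb{E}\{\xi\}(x) \, dx = 0,
\]
and (\ref{VAREP3}) becomes a variational equation for $(\mathbf{u}^0, \{\Phi_j^0, \Phi_j^1\})$ on the Hilbert space $\mathcal{H} := V \times \bigl(H^1_0(G) \times L^2(G; X)\bigr)^N$. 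I would first solve this reduced problem by Lax--Milgram and then recover $P^0 \in L^2_0(G)$ as a Lagrange multiplier via Lemma \ref{pressrec}.

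For coercivity of the reduced bilinear form, testing with the unknown itself makes the antisymmetric cross-terms of order $z_j$ cancel, exactly as in the proof of Proposition \ref{APRIORI1}, and leaves
\[
\int_{G \times \mathcal{F}} |\nabla_\omega \mathbf{u}^0|^2 \, dx \, d\mu + \sum_{j=1}^N \frac{z_j^2}{\pej} \int_{G \times \mathcal{F}} n_j^0(\omega) \, |\nabla_x \Phi_j^0 + \Phi_j^1|^2 \, dx \, d\mu.
\]
The uniform lower bound $n_j^0 \geq \underline{n} > 0$ is inherited from the $L^\infty$ estimate for $\Psi^0$ in Theorem \ref{Limiteqequilib}, and a stochastic analog of the Poincaré inequality of Lemma \ref{Poinc}, applied in the fast variable to fields vanishing on $\mathcal{M}$, controls $\|\mathbf{u}^0\|_{L^2(G \times \mathcal{F})^n}$ by $\|\nabla_\omega \mathbf{u}^0\|_{L^2(G \times \mathcal{F})^{n^2}}$. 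For the ionic-potential block, the positive definiteness of the Neumann tensor $\mathcal{A}_N^0$ from (\ref{N_matr}), combined with an orthogonal decomposition of $\Phi_j^1$ into the optimal corrector for $\nabla_x \Phi_j^0$ plus a remainder and a Young-type absorption, yields
\[
\int_{\mathcal{F}} |\nabla_x \Phi_j^0(x) + \Phi_j^1(x, \cdot)|^2 \, d\mu \geq c_1 \, |\nabla_x \Phi_j^0(x)|^2 + c_2 \, \|\Phi_j^1(x, \cdot)\|_{L^2(\mathcal{F})^n}^2
\]
with positive $c_1, c_2$. Coupled with the classical Poincaré inequality on $H^1_0(G)$, this gives full coercivity on $\mathcal{H}$. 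Continuity of both forms is immediate from Cauchy--Schwarz and the $L^\infty$ bound on $n_j^0$, so Lax--Milgram delivers a unique solution $(\mathbf{u}^0, \{\Phi_j^0, \Phi_j^1\}) \in \mathcal{H}$.

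To recover $P^0 \in L^2_0(G)$, I introduce the linear functional $\Lambda$ on the space of admissible test functions $\xi$ from (\ref{Ksi}) defined as the defect between the left- and right-hand sides of (\ref{VAREP3}) with the already-constructed solution substituted. By construction $\Lambda$ vanishes on $V$, hence $\Lambda(\xi)$ depends only on $\div_x \mathbb{E}\{\xi\} \in L^2_0(G)$. Lemma \ref{pressrec} provides, for every $\eta \in L^2_0(G)$, an $\omega$-divergence-free lift $\Theta$ vanishing on $\mathcal{M}$ with $\div_x \mathbb{E}\{\Theta\} = \eta$, so $\eta \mapsto \Lambda(\Theta)$ is a continuous linear form on $L^2_0(G)$, represented by Riesz by a unique $P^0 \in L^2_0(G)$. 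Plugging back shows that (\ref{VAREP3}) holds for every admissible $\xi$, and uniqueness of the full tuple is inherited from Lax--Milgram and the injectivity of this reconstruction.

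The main technical obstacle is the coercivity estimate for the ionic-potential pair: it requires a quantitative form of the positive definiteness of $\mathcal{A}_N^0$, which is guaranteed under the disperse-medium hypotheses \textbf{R1.--R4.}, and it must be combined carefully with the orthogonal decomposition of $\Phi_j^1$ in order to extract separate $L^2$-control of both $\nabla_x \Phi_j^0$ and $\Phi_j^1$ from the single mixed quadratic quantity appearing in the reduced bilinear form.
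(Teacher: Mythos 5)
Your proposal follows essentially the same route as the paper: restrict to test functions in $V$ so the pressure term vanishes, apply Lax--Milgram with coercivity obtained from the cancellation of the antisymmetric cross-terms and the uniform lower bound $n_j^0\geq C>0$, and then recover/identify $P^0$ through Lemma \ref{pressrec}. The only difference is that you spell out the details the paper leaves implicit (the splitting of $\int_{\mathcal{F}}|\nabla_x\Phi_j^0+\Phi_j^1|^2\,d\mu$ via the positive definiteness of $\mathcal{A}_N^0$, and the Riesz-representation construction of the pressure), and these details are correct.
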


\begin{proof}  We study variational problem (\ref{VAREP3}) with $\xi \in V$, $\varphi_j \in  H^1 (G)/ \mathbb{R}$ and with $\nabla_\omega \gamma_j,$ $j=1, \dots , N$, replaced by arbitrary element of $L^2(G; X)$. We notice that for $\xi \in V$, $\displaystyle \int_{G \times \mathcal{F} } P^0 (x)  \, \div_x \xi  \ dx d\mu =0$. Hence
 we apply the
Lax- Milgram  lemma to prove the existence and uniqueness of
$( \mathbf{u}^0 ,  \{ \Phi_j^0 , \Phi_j^1 \} )$ in
$V  \times  (H^1_0 (G) \times L^2(G; X))^N$.
The only point which requires to be checked is the coercivity of the bilinear form.
We take $\xi =\mathbf{u}^0$, $\varphi_j = \Phi_j^0$ and  $g_j = \Phi_j^1$ as the test functions in (\ref{VAREP3}). Using the incompressibility constraints (\ref{VAREP51}) and the anti-symmetry
of the third integral in (\ref{VAREP3}),
we obtain the quadratic form
\begin{gather}
    \int_{G \times \mathcal{F} } | \nabla_\omega \mathbf{u}^0 (x,\omega) |^2 \ dx d\mu + %\notag \\\hskip-12pt
   \sum_{j=1}^N \frac{z_j^2}{\pej} \int_{G \times \mathcal{F} } n_j^0 (\omega) | \nabla_x \Phi_j^0 (x) +
   \Phi_j^1 (x,\omega) |^2 \ dx d\mu .\label{Uniq1}
\end{gather}
Recalling from Lemma \ref{lem.pb} that $n_j^0 (\omega)\geq C>0$ in $\mathcal{F}$, it is easy
to check that each term in the sum on the second line of (\ref{Uniq1}) is bounded
from below by
$$
C \left( \int_{G}  | \nabla_x \Phi_j^0 (x)|^2 \ dx +
\int_{G \times \mathcal{F}} |  \Phi_j^1 (x,y) |^2 \ dx d\mu \right) ,
$$
which proves the coerciveness of the bilinear form in the required space.\vskip0pt
It remains to prove uniqueness of the pressure $p^0$. It is sufficient to prove that for the homogeneous data, $P^0=0$ in $L^2_0 (G)$.

By the above result and using equation (\ref{VAREP3}), we have
$$ 0= \int_{G \times \mathcal{F} } P^0 (x)  \, \div_x \xi  \ dx d\mu =\int_{G}  P^0 (x) \mathbb{E} \{ \mbox{div}_x \ \xi \} \ dx.$$
Hence, by Lemma \ref{pressrec}, $P^0$ is orthogonal to all elements of $L^2_0 (G)$ and, as such, equal to zero.
\end{proof}

\begin{remark}\label{funcspac}  In analogy with Allaire \cite{ALL92a} (see also Allaire \cite[Section 3.1.2]{All97}), the space $V$ is  orthogonal in $L^2$ $(G ; \mathcal{D} (\Omega)^n )$ to
the space of gradients of the form $\nabla_x q(x) + \nabla_\omega q_1(x,\omega)$ with $q(x)\in
H^1 (G)/\RR$ and  $q_1(x,\omega)\in L^2 \left(G\times \mathcal{F}   \right)$.
\end{remark}

{\bf Proof of Theorem \ref{1.15}:}\vskip0pt
By virtue of the a priori estimates in Lemmas \ref{APRIORI1} and \ref{C1.6},
and using the compactness of Proposition \ref{twosccompact} and Lemma \ref{pressnonosc}, there exist a
subsequence, still denoted by $\ep$, and limits
$( \mathbf{u}^0 , p^0 , \{ \Phi_j^0 , \Phi_j^1 \}_{1\leq j\leq N} ) \in V \times L^2_0 (G) \times (H^1_0 (G)\times L^2(G; X))^N$
such that the convergences in Theorem \ref{1.15} are satisfied.
Using Lemma \ref{momentumflo} we pass to the two-scale limit in (\ref{VAREP1}) we get that
the limit $( \mathbf{u}^0 , p^0 , \{ \Phi_j^0 , \Phi_j^1 \}_{1\leq j\leq N} )$
satisfy the  two-scale variational formulation (\ref{VAREP3}).

According to Proposition \ref{uniq}, the limit system has a unique solution and the whole sequence converges.

It remains to recover the two-scale homogenized system (\ref{Stokes1})-(\ref{Diffg2})
from the variational formulation (\ref{VAREP3}).
In order to get the Stokes equations (\ref{Stokes1}) we choose $\varphi_j=0$ and $\gamma_j=0$
in (\ref{VAREP3}). Using Corollary 2.7 from \cite{wright} %{\color{red} ANDREY: please see the footnote}
%\footnote{The result requires $\mathcal{T}$-openess and connectness. ANDREY:
%\Maybe we could simply avoid dealing with $p^1$.}
we deduce the existence of a pressure field $p^1(x,\omega)$
in $L^2(G\times \Omega)$ such that
$$
\hskip-10pt - \Delta_\omega \mathbf{u}^0  + \nabla_\omega p^1 = - \nabla_x p^0 - \mathbf{f}^* +
\sum_{j=1}^N z_j n_j^0(\nabla_x \Phi_j^0 +  \Phi_j^1 + \mathbf{E} ) \; \mbox{ in } \, G\times \mathcal{F}.
$$
The incompressibility constraints (\ref{Stokes2}) and (\ref{VAREP51}) are simple
consequences of passing to the two-scale limit in the equation $\div\mathbf{u}^\ep =0$ in $G_f^\ep (\omega)$.
To obtain the cell convection-diffusion equation (\ref{Diff1}) we now choose $\xi=0$ and
$\varphi_j=0$ in (\ref{VAREP3}) while the macroscopic convection-diffusion equation (\ref{Diffg2})
is obtained by taking $\xi=0$ and $\gamma_j=0$.
This finishes the proof of Theorem \ref{1.15}.

\section{Scale separation and Onsager's relations}\label{sec4}

{%\color{blue}
The limit problem obtained in Theorem \ref{1.15} contains the two-scales and a large set if unknowns. Furthermore, it is a system of stochastic PDEs in a random geometry.
%and two-pressure homogenized problem, following the terminology of \cite{hornung}. It is well posed \cite{AllMiPi:10} because %the two incompressibility constraints
%(\ref{Stokes2}) and (\ref{VAREP51}) are exactly dual to the two pressures $p^0(x)$
%and $p^1(x,\omega)$ which are their corresponding Lagrange multipliers.
 For the practical purposes (which are overall the computational ones), it is important to  extract from (\ref{Stokes1})-(\ref{Diffg2}) the macroscopic
homogenized problem, if possible. It requires to separate the slow ($x$-) and fast ($\omega$-) scale.
This was undertaken in Looker \& Carnie \cite{LC:06} for periodic porous media. In Allaire et al \cite{AllMiPi:10} their analysis was simplified
 and   Onsager properties for
the effective fluxes were established. In addition, the scale separation results allowed  establishing further qualitative properties of the effective coefficients and eliminating the fast scale. In this article, our goal is to generalize results from Allaire et al \cite{AllMiPi:10} to stochastic porous media.

The main idea is identifying in  two-scale homogenized problem
(\ref{Stokes1})-(\ref{Diffg2})  the two different sets of macroscopic
fluxes, namely $(\nabla_x P^0 (x)+\mathbf{f}^*(x))$ and
$\{\nabla_x \Phi_j^0(x) + \mathbf{E} (x)\}_{1\leq j\leq N}$.
Therefore, we introduce two families of random geometry  problems, indexed by $k\in\{1,...,n\}$
for each component of these fluxes. We denote by $\{\mathbf{e}^k\}_{1\leq k\leq n}$
the canonical basis of $\RR^n$.}

The first family of random geometry problem, corresponding to the macroscopic pressure gradient, is
\begin{gather}
\mbox{Find} \; \{ \mathbf{v}^{0,k}, \Theta^{0,k}_j \} \in \mathcal{W}\times X, \ j=1, \dots, N, \quad \mbox{such that} \notag \\
\int_{ \mathcal{F} }  \nabla_\omega \mathbf{v}^{0,k} (\omega) : \nabla_\omega \xi (\omega) \  d\mu -
 \sum_{j=1}^N \int_{ \mathcal{F}} z_j n_j^0 (\omega) \Theta^{0,k}_j (\omega) \cdot (\xi (\omega) -\frac{z_j}{\pej}\zeta_j (\omega) ) \ d\mu  \notag \\
 + \sum_{j=1}^N z_j \int_{ \mathcal{F}} n_j^0 (\omega)\mathbf{v}^{0,k} (\omega) \cdot \zeta_j (\omega)  \ d\mu   = \int_{ \mathcal{F}} \mathbf{e}^k \cdot \xi (\omega)\ d\mu, \quad \forall \xi \in \mathcal{W}, \, \zeta_j \in X, j=1, \dots, N.
 \label{StokesAux0}
\end{gather}
The second family of random geometry problem, corresponding to the macroscopic diffusive flux, is
for each species $i\in\{1,...,N\}$
\begin{gather}
\mbox{Find} \; \{ \mathbf{v}^{i,k}, \Theta^{i,k}_j \} \in \mathcal{W}\times X, \ j=1, \dots, N, \quad \mbox{such that} \notag \\
\int_{ \mathcal{F} }  \nabla_\omega \mathbf{v}^{i,k} (\omega) : \nabla_\omega \xi (\omega) \  d\mu -
 \sum_{j=1}^N \int_{ \mathcal{F}} z_j n_j^0 (\omega) \bigg(\Theta^{i,k}_j (\omega) \cdot (\xi (\omega) -\frac{z_j}{\pej}\zeta_j (\omega)) - \mathbf{v}^{i,k} (\omega) \cdot \zeta_j (\omega)\bigg) \ d\mu  \notag \\
 =  z_i \int_{ \mathcal{F}}  \mathbf{e}^k \cdot (\xi (\omega)-\frac{z_i}{\pei}\zeta_i (\omega) )\ d\mu, \quad \forall \xi \in \mathcal{W}, \, \zeta_j \in X, j=1, \dots, N.
  \label{StokesAuxi}
\end{gather}

\begin{lemma} \label{Exaux} Problems (\ref{StokesAux0}) and (\ref{StokesAuxi}) admit a unique solution. \end{lemma}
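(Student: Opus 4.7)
The plan is to combine both auxiliary problems into a single abstract variational equation on the product Hilbert space $\mathcal{W}\times X^N$ and apply the Lax--Milgram lemma. The two problems share the same bilinear form
\begin{equation*}
a\bigl((\mathbf{v},\{\Theta_j\}),(\xi,\{\zeta_j\})\bigr)
=\int_{\mathcal{F}}\nabla_\omega \mathbf{v}:\nabla_\omega \xi\,d\mu
+\sum_{j=1}^N z_j\int_{\mathcal{F}} n_j^0\bigl(\mathbf{v}\cdot \zeta_j-\Theta_j\cdot\xi\bigr)\,d\mu
+\sum_{j=1}^N\frac{z_j^2}{\pej}\int_{\mathcal{F}} n_j^0\,\Theta_j\cdot\zeta_j\,d\mu,
\end{equation*}
and differ only in their right--hand side linear functionals, which are clearly continuous on $\mathcal{W}\times X^N$ by Cauchy--Schwarz once we recall that $n_j^0(\omega)=n_j^c\exp\{-z_j\Psi^0(\omega)\}$ is $\mu$--essentially bounded by the $L^\infty$ control of $\Psi^0$ proven in Theorem \ref{Limiteqequilib}. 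Continuity of $a$ follows from the same ingredients.

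The heart of the argument is coercivity. Testing with $\xi=\mathbf{v}$ and $\zeta_j=\Theta_j$, the two indefinite cross-terms $\pm\,z_j\int n_j^0\mathbf{v}\cdot\Theta_j\,d\mu$ cancel (this is the antisymmetric structure already exploited in Proposition \ref{APRIORI1}), leaving
\begin{equation*}
a\bigl((\mathbf{v},\{\Theta_j\}),(\mathbf{v},\{\Theta_j\})\bigr)
=\int_{\mathcal{F}}|\nabla_\omega \mathbf{v}|^2\,d\mu+\sum_{j=1}^N\frac{z_j^2}{\pej}\int_{\mathcal{F}} n_j^0\,|\Theta_j|^2\,d\mu.
\end{equation*}
Because $\Psi^0\in L^\infty(\Omega)$ we also have $n_j^0\geq c_0>0$ (cf.\ Remark \ref{lem.pb}), so the second sum controls $\sum_j\|\Theta_j\|_{L^2(\mathcal{F})^n}^2$ and, since $X$ is a closed subspace of $L^2(\Omega\setminus\mathcal{M})^n$, the full $X$--norms.

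The main obstacle is coercivity of the first term on $\mathcal{W}$: this requires a stochastic Poincar\'e--type inequality
\begin{equation*}
\int_{\mathcal{F}}|\mathbf{v}|^2\,d\mu\leq C\int_{\mathcal{F}}|\nabla_\omega \mathbf{v}|^2\,d\mu,\qquad
\mathbf{v}\in\mathcal{D}(\Omega)^n,\ \mathbf{v}=0\ \text{on }\mathcal{M}.
\end{equation*}
This is the stochastic analogue of Lemma \ref{Poinc} and is where hypotheses \textbf{R1.--R5.} are fully used. I would argue as follows: the security--domain construction around each grain $M_j(\omega)$ (introduced before Lemma \ref{C1.6} for the pressure extension) provides a deterministic Poincar\'e constant on each $F_j(\omega)\cup\overline{M_j(\omega)}$; hypothesis \textbf{R5.} prevents arbitrarily large empty regions in $F(\omega)$, so the realization--wise constant is uniform; passing from the $\mathcal{T}(x)$--translates to the expectation via the Birkhoff ergodic theorem delivers the claimed inequality on $\mathcal{W}$. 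Once coercivity is established, Lax--Milgram yields existence and uniqueness of $(\mathbf{v}^{0,k},\{\Theta^{0,k}_j\})$ and $(\mathbf{v}^{i,k},\{\Theta^{i,k}_j\})$ in $\mathcal{W}\times X^N$, completing the proof.
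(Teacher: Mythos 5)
Your proof is correct and follows exactly the route the paper intends: the paper states Lemma \ref{Exaux} without an explicit proof, but the shared bilinear form, the cancellation of the antisymmetric cross-terms, and the resulting quadratic form $\int_{\mathcal F}|\nabla_\omega\mathbf{v}|^2\,d\mu+\sum_j\frac{z_j^2}{\pej}\int_{\mathcal F}n_j^0|\Theta_j|^2\,d\mu$ are precisely what appear in the proofs of Propositions \ref{APRIORI1}, \ref{uniq} and \ref{prop.eff}. Your explicit treatment of the stochastic Poincar\'e inequality on $\mathcal{W}$ (via the deterministic cell-wise constants from \textbf{R1.--R5.} and stationarity) is a point the paper leaves implicit, and it is the right way to close the coercivity argument.
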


Then, we can decompose the solution of (\ref{Stokes1})-(\ref{Diffg2}) as
\begin{gather}
\mathbf{u}^0 (x,\omega) = \sum_{k=1}^n \left( - \mathbf{v}^{0,k}(\omega)
\left( \frac{\p p^0}{\p x_k} +
f^*_k \right)(x) +
\sum_{i=1}^N \mathbf{v}^{i,k}(\omega) \left( E^*_k + \frac{\p \Phi^0_i}{\p x_k} \right)(x) \right) \label{micro1}\\
 \Phi_j^1 (x,\omega) = \sum_{k=1}^n  \left( - \Theta^{0,k}_j(\omega)
\left( \frac{\p p^0}{\p x_k} +
f^*_k \right)(x) +
\sum_{i=1}^N \Theta^{i,k}_j(\omega) \left( E^*_k + \frac{\p \Phi^0_i}{\p x_k} \right)(x) \right) .\label{micro3}
\end{gather}
We average (\ref{micro1})-(\ref{micro3}) in order to get a purely
macroscopic homogenized problem. We introduce the
 perturbation of the total electrochemical potential $\mu^\ep_j$ as
$$
\delta \mu_j^\ep = - z_j (\Phi_j^\ep + \Psi^{ext} )
$$
and the ionic flux of the $j$th species
$$
\mathbf{j}_{j}^\ep = \frac{z_j}{\pej}  n^\ep_j \left( \nabla \Phi_j^\ep + \mathbf{E}
+ \frac{\pej}{z_j} \mathbf{u}^\ep \right) .
$$
The corresponding homogenized quantities are defined as
\begin{gather*}
\mu_j(x) = - z_j (\Phi_j^0 (x) + \Psi^{ext}(x) ) ,\\
\mathbf{j}_j(x) = \frac{z_j}{\pej} \mathbb{E} \bigg\{  n^0_j (\omega) ( \nabla_x \Phi_j^0 (x) + \mathbf{E}  +  \Phi^1_j (x,\omega) + \frac{\pej}{z_j} \mathbf{u}^0(x,\omega) ) \bigg\},\quad \mathbf{u}(x) =  \mathbb{E} \{ \mathbf{u}^{0} \} .
\end{gather*}
From (\ref{micro1})-(\ref{micro3}) we deduce the homogenized or upscaled equations
for the above effective fields.

\begin{proposition}\label{prop.eff}
Introducing the flux
$\mathcal{J}(x) = (\mathbf{u},\{\mathbf{j}_j\}_{1\leq j\leq N})$
and the gradient
$\mathcal{F}(x) =(\nabla_x p^0, \{\nabla_x \mu_j\}_{1\leq j\leq N})$,
the macroscopic equations are
\begin{gather}
\div_x \mathcal{J} =0 \quad \mbox{in } \; G , \label{hom1}\\
\mathcal{J} = - \mathcal{B} \mathcal{F} - \mathcal{B} (\mathbf{f}^* , \{ 0\})\label{hom2}
\end{gather}
with a symmetric positive definite $\mathcal{B}$, defined by
\begin{equation}
\label{Onsager}
   \mathcal{B}  = \left(
                    \begin{array}{cccc}
\dsp  \mathbb{K}    & \dsp \frac{\mathbb{J}_1}{z_1}  & \dots  &  \dsp  \frac{\mathbb{J}_N}{z_N}  \\
\dsp  \mathbb{L}_1 & \dsp \frac{\mathbb{D}_{11}}{z_1} & \cdots &  \dsp       \frac{\mathbb{D}_{1N}}{z_N}\\
\dsp  \vdots  & \vdots & \ddots & \vdots \\
\dsp  \mathbb{L}_N & \dsp  \frac{\mathbb{D}_{N1}}{z_1} & \cdots & \dsp \frac{\mathbb{D}_{NN}}{z_N} \\
                    \end{array}
                  \right) ,
\end{equation}
and complemented with the boundary conditions for $p^0$
and $\{\Phi_j^0\}_{1\leq j\leq N}$.
The matrices $\mathbb{J}_i$, $\mathbb{K}$, $\mathbb{D}_{ji}$ and $\mathbb{L}_j$
are defined by their entries
\begin{gather}
\{ \mathbb{J}_i \}_{lk} = \mathbb{E} \{ \mathbf{v}^{i,k} \cdot \mathbf{e}^l \} ,\quad
\{ \mathbb{K} \}_{lk} = \mathbb{E} \{ \mathbf{v}^{0,k} \cdot \mathbf{e}^l \},\notag\\
\{ \mathbb{D}_{ji} \}_{lk} = \mathbb{E} \{ n_j^0  ( \mathbf{v}^{i,k}
+ \frac{z_j}{\pej} \left( \delta_{ij} \mathbf{e}^k +  \Theta^{i,k}_j  \right) ) \cdot \mathbf{e}^l \} ,\quad
\{ \mathbb{L}_j \}_{lk} = \mathbb{E} \{ n_j^0  ( \mathbf{v}^{0,k}
+ \frac{z_j}{\pej}  \Theta^{0,k}_j  ) \cdot \mathbf{e}^l \} .\notag
\end{gather}
\end{proposition}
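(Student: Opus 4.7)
The plan is to first solve the cell problems, then derive the effective macroscopic relations, and finally establish the Onsager symmetry and positive definiteness of $\mathcal{B}$. For Lemma \ref{Exaux}, each cell bilinear form acting on $\mathcal{W}\times X^N$ splits into a symmetric piece
$$
A_s((V,\{W_j\});(\tilde V,\{\tilde W_j\})) = \int_{\mathcal{F}}\nabla_\omega V:\nabla_\omega\tilde V\,d\mu+\sum_{j=1}^N\frac{z_j^2}{\pej}\int_{\mathcal{F}} n_j^0\, W_j\cdot\tilde W_j\,d\mu
$$
and an antisymmetric cross piece $\sum_{j=1}^N z_j\int_{\mathcal{F}} n_j^0(V\cdot\tilde W_j - \tilde V\cdot W_j)\,d\mu$, the latter vanishing on the diagonal. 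Since $n_j^0$ is bounded below by a positive deterministic constant (Theorem \ref{Limiteqequilib} and the $L^\infty$ bound on $\Psi^0$) and $\mathcal{W}$ carries the Poincar\'e inequality built into $\mathcal{A}_N^0$, the Lax--Milgram lemma yields existence and uniqueness.

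Next I would derive (\ref{hom1})--(\ref{hom2}) by inserting the decompositions (\ref{micro1})--(\ref{micro3}) into the definitions of $\mathbf{u}(x)$ and $\mathbf{j}_j(x)$, taking stochastic expectations, and rewriting the sources via $\nabla\mu_i=-z_i(\nabla\Phi_i^0+\mathbf{E})$; reading off the coefficients of $\nabla p^0+\mathbf{f}^*$ and of $\nabla\mu_i$ gives the block entries of $\mathcal{B}$ as claimed. The divergence-free condition $\div_x\mathcal{J}=0$ is a direct consequence of the macroscopic incompressibility (\ref{VAREP51}) and of the macroscopic diffusion equation (\ref{Diffg2}).

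The heart of the proof is the Onsager identities $\mathbb{K}^\top=\mathbb{K}$, $z_j\mathbb{L}_j=\mathbb{J}_j^\top$ and $z_j\mathbb{D}_{ji}^\top=z_i\mathbb{D}_{ij}$, which I would obtain by cross-testing pairs of cell problems. For the mixed block, test (\ref{StokesAux0}) indexed by $k$ with $(\mathbf{v}^{j,l},\{\Theta^{j,l}_p\}_p)$, test (\ref{StokesAuxi}) for species $j$ and index $l$ with $(\mathbf{v}^{0,k},\{\Theta^{0,k}_p\}_p)$, and subtract. The symmetric part cancels, leaving $2\sum_p z_p\int_{\mathcal{F}}n_p^0(\mathbf{v}^{0,k}\cdot\Theta^{j,l}_p-\mathbf{v}^{j,l}\cdot\Theta^{0,k}_p)\,d\mu$ on the left against the difference of the two right-hand sides. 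The constraint equations (obtained by setting $\xi=0$ in each cell problem) express $\int n_p^0 \Theta_p^{\alpha,\beta}\cdot\zeta_p$ in terms of $\int n_p^0\mathbf{v}^{\alpha,\beta}\cdot\zeta_p$ for every $\zeta_p\in X$, with an inhomogeneous boundary-type term whenever the species index matches the cell index. Plugging $\zeta_p=\Theta_p^{j,l}$ into the $(0,k)$-constraint and $\zeta_p=\Theta_p^{0,k}$ into the $(j,l)$-constraint, and comparing, the cross integrals telescope and reproduce precisely $z_j\mathbb{L}_j=\mathbb{J}_j^\top$. The relations $\mathbb{K}^\top=\mathbb{K}$ and $z_j\mathbb{D}_{ji}^\top=z_i\mathbb{D}_{ij}$ follow from the same cross-testing applied to $(0,k)$--$(0,l)$ and $(i,k)$--$(j,l)$ respectively.

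Finally, for positive definiteness, pick $\mathcal{X}=(\xi^0,\xi^1,\ldots,\xi^N)\neq 0$ and form the linear combinations $\bar{\mathbf{v}}=-\sum_k\mathbf{v}^{0,k}\xi^0_k+\sum_{i,k}\mathbf{v}^{i,k}\xi^i_k$ and $\bar\Theta_j=-\sum_k\Theta^{0,k}_j\xi^0_k+\sum_{i,k}\Theta^{i,k}_j\xi^i_k$. Testing the corresponding linear combination of cell problems against $(\bar{\mathbf{v}},\{\bar\Theta_j\})$, the antisymmetric cross terms vanish and one obtains
$$
\mathcal{X}^\top\mathcal{B}\mathcal{X}=\int_{\mathcal{F}}|\nabla_\omega\bar{\mathbf{v}}|^2\,d\mu+\sum_{j=1}^N\frac{z_j^2}{\pej}\int_{\mathcal{F}} n_j^0|\bar\Theta_j|^2\,d\mu,
$$
which is strictly positive unless $\bar{\mathbf{v}}\equiv 0$ and all $\bar\Theta_j\equiv 0$; in that case uniqueness of the cell problems forces $\mathcal{X}=0$. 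The main obstacle is the careful bookkeeping of the antisymmetric cross terms against the inhomogeneous constraints carried by (\ref{StokesAuxi}) for the matching species index, but the mechanism is parallel to the periodic argument in \cite{AllMiPi:10}, with the stochastic space $X$ playing the role of the periodic space of mean-zero gradients.
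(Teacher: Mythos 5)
Your proposal follows essentially the same route as the paper: equations (\ref{hom1})--(\ref{hom2}) by averaging the scale-separation formulas (\ref{micro1})--(\ref{micro3}), symmetry by cross-testing pairs of cell problems (the paper packages this with two arbitrary linear combinations $\eta$, $\check\eta$ rather than entry-wise, which is equivalent), and positive definiteness by testing a linear combination of cell problems against itself so that the antisymmetric cross terms cancel. One correction to your displayed quadratic-form identity: because the block $\mathbb{D}_{ji}$ is defined through $\delta_{ij}\mathbf{e}^k+\Theta^{i,k}_j$, the correct identity is $\mathcal{X}^\top\mathcal{B}\mathcal{X}=\int_{\mathcal{F}}|\nabla_\omega\bar{\mathbf{v}}|^2\,d\mu+\sum_j\frac{z_j^2}{\pej}\int_{\mathcal{F}}n_j^0|\bar\Theta_j+\xi^j|^2\,d\mu$ (with the constant vectors absorbed into the square, as in the paper's manipulation), not with $|\bar\Theta_j|^2$ alone; consequently strict positivity is not obtained from "uniqueness of the cell problems" but from the fact that a nonzero constant vector cannot lie in $X$ --- i.e.\ from the positive definiteness of $\mathcal{A}_N^0$ under {\bf R1.}--{\bf R4.} --- which forces $\xi^j=0$, then $\bar{\mathbf{v}}=0$ and finally $\xi^0=0$ from the variational equation. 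With that bookkeeping fixed (which you yourself flag as the delicate point), the argument is sound and matches the paper's.
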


\begin{remark}
The tensor $\mathbb{K}$ is called permeability tensor, $\mathbb{D}_{ji}$
are the electrodiffusion tensors.
The symmetry of the tensor $\mathcal{M}$ is equivalent to the famous Onsager's
reciprocal relations. It was already proved in \cite{LC:06}. However, the
positive definiteness of $\mathcal{M}$ was proved in \cite{AllMiPi:10}.
It is essential in order to state that (\ref{hom1})-(\ref{hom2}) is an
elliptic system which admits a unique solution.
\end{remark}

\begin{proof}[Proof of Proposition \ref{prop.eff}]
The relation in \eqref{hom1}  is an immediate consequence of \eqref{VAREP51} and \eqref{Diffg2}.
Taking the expectation on the left- and right-hand sides of equalities \eqref{micro1} and  \eqref{micro3} and considering
the definitions of the homogenized functions $\{\mu_j\}$ and $\{\mathbf{j}_j\}$ and of the entries of the matrix $\mathcal{B}$, after elementary computations we arrive at \eqref{hom2}.

In order to justify positive definiteness of $\mathcal{B}$ we fix an arbitrary vector $\eta=\big(\eta^0,\eta^1,\ldots,\eta^N\big)$, $\eta^j\in\mathbb R^d$,  and denote by $\mathbf{v}^\eta$ and $\Theta^\eta_j$ the following functions:
\begin{equation*}\label{lin_comb0}
  \mathbf{v}^\eta=\sum\limits_{l=1}^d\Big\{\eta_l^0\mathbf{v}^{0,l}(\omega)+ \sum\limits_{i=1}^N\eta_l^i
  \mathbf{v}^{i,l}(\omega)\Big\},
\end{equation*}
\begin{equation*}\label{lin_comb1}
  \Theta_j^\eta=\sum\limits_{l=1}^d\Big\{\eta_l^0\Theta^{0,l}_j(\omega)+ \sum\limits_{i=1}^N\eta_l^i
  \Theta^{i,l}_j(\omega)\Big\}.
\end{equation*}
From \eqref{StokesAux0} we derive that
$$
 \{ \mathbf{v}^{\eta}, \Theta^{\eta}_j \} \in \mathcal{W}\times X, \ j=1, \dots, N,
$$
and
\begin{equation}
\label{StokesAux_eta}
\begin{array}{c}
\displaystyle
\int_{ \mathcal{F} }  \nabla_\omega \mathbf{v}^{\eta} (\omega) : \nabla_\omega \xi (\omega) \  d\mu -
 \sum_{j=1}^N \int_{ \mathcal{F}} z_j n_j^0 (\omega) \Theta^{\eta}_j (\omega) \cdot (\xi (\omega) -\frac{z_j}{\pej}\zeta_j (\omega) ) \ d\mu  \\[2mm]
 \displaystyle
 + \sum_{j=1}^N z_j \int_{ \mathcal{F}} n_j^0 (\omega)\mathbf{v}^{\eta} (\omega) \cdot \zeta_j (\omega)  \ d\mu
 +\sum_{j=1}^N\int_{ \mathcal{F} }z_j n_j^0 (\omega)\big(\eta^j\cdot\xi(\omega)-\eta^j\cdot\frac1{\bf Pe}_j\zeta_(\omega)\big)\,d\mu
  \\[5mm]
 \displaystyle = \int_{ \mathcal{F}} \eta^0 \cdot \xi (\omega)\ d\mu, \quad \forall \xi \in \mathcal{W}, \, \zeta_j \in X, j=1, \dots, N.
\end{array}
\end{equation}
Substituting $\mathbf{v}^{\eta}$ for $\xi$ and $ \Theta^{\eta}_j$ for $\zeta_j$ in this integral relation yields
\begin{equation*}
\label{quadr_calb}
\begin{array}{c}
\displaystyle
\int_{ \mathcal{F} }  |\nabla_\omega \mathbf{v}^{\eta} (\omega) |^2 \  d\mu +
 \sum_{j=1}^N \int_{ \mathcal{F}} \frac{z_j^2}{{\rm Pe}_j} n_j^0 (\omega) |\Theta^{\eta}_j (\omega)|^2  \, d\mu   \\[2mm]
 \displaystyle
= \int_{ \mathcal{F} }\eta^0\cdot\mathbf{v}^{\eta} (\omega)\,d\mu
 + \sum_{j=1}^N \int_{ \mathcal{F}} n_j^0 (\omega)\eta^j\cdot \Big(z_j\mathbf{v}^{\eta} (\omega)-
  \frac{z_i^2}{{\rm Pe}_j}\Theta^{\eta}_j (\omega)\Big)  \, d\mu;
\end{array}
\end{equation*}
here the quadratic form on the left-hand side have been obtained in the same way as the quadratic form in  \eqref{Uniq1}.
This implies the following relation:
 \begin{equation*}
\label{quadr_calbmod}
\begin{array}{c}
\displaystyle
\int_{ \mathcal{F} }  |\nabla_\omega \mathbf{v}^{\eta} (\omega) |^2 \,  d\mu +
 \sum_{j=1}^N \int_{ \mathcal{F}} \frac{z_j^2}{{\rm Pe}_j} n_j^0 (\omega) |\Theta^{\eta}_j (\omega)+\eta^j|^2  \, d\mu   \\[2mm]
 \displaystyle
= \int_{ \mathcal{F} }\eta^0\cdot\mathbf{v}^{\eta} (\omega)\,d\mu
 + \sum_{j=1}^N \int_{ \mathcal{F}} n_j^0 (\omega)\eta^j\cdot \Big(z_j\mathbf{v}^{\eta} (\omega)+
  \frac{z_i^2}{{\rm Pe}_j}\big(\Theta^{\eta}_j (\omega)+\eta^j\big)\Big)  \, d\mu \\[2mm]
 \displaystyle
 \mathbb K\eta^0\cdot\eta^0+\sum\limits_{j=1}^N\mathbb J_j\eta^j\cdot\eta^0+
 \sum\limits_{i,\,j=1}^N\mathbb J_j\eta^j\cdot\eta^0z_i\eta^i\cdot\mathbb D_{ij}\eta^j+
 \sum\limits_{j=1}^N z_j\eta^j\cdot\mathbb L_j\eta^0 \\[2mm]
 \displaystyle
 =\mathcal{B}\big(\eta^0,\, z_j\eta^j\big)^T\cdot \big(\eta^0,\, z_j\eta^j\big)^T,
\end{array}
\end{equation*}
which in turn yields the desired positive definiteness.

In order to show that  $\mathcal{B}$ is symmetric we consider $\mathbf{v}^{\check{\eta}}$ and $\Theta_j^{\check{\eta}}$
with another set of vectors $\check{\eta}^0$, $\{\check{\eta}^j\}_{j=1}^N$ in $\mathbb R^d$.   Then we substitute $\mathbf{v}^{\check{\eta}}$ and $0$ for  $\xi$ and $\zeta_j$, respectively, in  \eqref{StokesAux_eta}. %\eqref{StokesAuxi}
In a similar integral relation corresponding to $\check{\eta}^0$, $\{\check{\eta}^j\}_{j=1}^N$ we substitute  $0$ for $\xi$ and
$\Theta^\lambda_j$ for $\zeta_j$. Summing up the resulting relations we get
$$
\begin{array}{c}
\displaystyle
\int_{ \mathcal{F} }\Big( \nabla_\omega \mathbf{v}^{\eta}\cdot\nabla_\omega \mathbf{v}^{\check{\eta}}
+\sum\limits_{j=1}^N n_j^0 (\omega) \Theta^{\eta}_j (\omega)\cdot \Theta^{\check{\eta}}_j (\omega)\Big) \,  d\mu \\[4mm]
\displaystyle
=\int_{ \mathcal{F} } \eta^0\cdot\nabla_\omega \mathbf{v}^{\check{\eta}} \,  d\mu +\sum\limits_{j=1}^N
\int_{ \mathcal{F} }z_j n_j^0 (\omega)\big(\eta^j\cdot \mathbf{v}^{\check{\eta}}-\frac {z_j}{{\rm Pe}_j}{\check{\eta}}
\cdot \Theta^{\eta}_j (\omega) \big) \,  d\mu.
\end{array}
$$
Exchanging $\eta$ and $\check{\eta}$ and considering the symmetry of the integral on the left-hand side we obtain
$$
\begin{array}{c}
\displaystyle
\int_{ \mathcal{F} } \eta^0\cdot\nabla_\omega \mathbf{v}^{\check{\eta}} \,  d\mu +\sum\limits_{j=1}^N
\int_{ \mathcal{F} }z_j n_j^0 (\omega)\big(\eta^j\cdot \mathbf{v}^{\check{\eta}}+\frac {z_j}{{\rm Pe}_j}{{\eta}}
\cdot \Theta^{\check{\eta}}_j (\omega) \big) \,  d\mu\\[4mm]
\displaystyle
=\int_{ \mathcal{F} } \check{\eta}^0\cdot\nabla_\omega \mathbf{v}^{{\eta}} \,  d\mu +\sum\limits_{j=1}^N
\int_{ \mathcal{F} }z_j n_j^0 (\omega)\big(\check{\eta}^j\cdot \mathbf{v}^{{\eta}}+\frac {z_j}{{\rm Pe}_j}{\check{\eta}}
\cdot \Theta^{{\eta}}_j (\omega) \big) \,  d\mu.
\end{array}
$$
This implies the following equality
$$
\eta^0\cdot \mathbb K\check{\eta}^0+\sum\limits_{j=1}^N\eta^0\cdot \mathbb J_j\check{\eta}^j+
\sum\limits_{i=1}^Nz_i\eta^i\cdot\Big(\mathbb L_i\check{\eta}^0+\sum\limits_{j=1}^N\mathbb D_{ij}\check{\eta}^j\Big)
$$
$$
=\check{\eta}^0\cdot \mathbb K{\eta}^0+\sum\limits_{j=1}^N\check{\eta}^0\cdot \mathbb J_j{\eta}^j+
\sum\limits_{i=1}^Nz_i\check{\eta}^i\cdot\Big(\mathbb L_i{\eta}^0+\sum\limits_{j=1}^N\mathbb D_{ij}{\eta}^j\Big).
$$
Therefore,
$$
\mathcal{B}\big(\eta^0, z_1\eta^1, \ldots,z_N\eta^N\big)^t\cdot
\big(\check{\eta}^0, z_1\check{\eta}^1, \ldots,z_N\check{\eta}^N\big)^t=
\mathcal{B}\big(\check{\eta}^0, z_1\check{\eta}^1, \ldots,z_N\check{\eta}^N\big)^t
\cdot\big(\eta^0, z_1\eta^1, \ldots,z_N\eta^N\big)^t
$$
This yields the desired symmetry of the matrix $\mathcal{B}$.
 \end{proof}
 {%\color{blue}
\begin{corollary}\label{regularity}
The homogenized equations in Proposition \ref{prop.eff} form
a symmetric elliptic system
\begin{gather}
\div_x \{ \mathbb{K} (\nabla_x p^0+ \mathbf{f}^* ) + \sum_{i=1}^N \mathbb{J}_i (\nabla_x \Phi_i^0 + \mathbf{E} ) \} = 0\; \mbox{ in } \; \O , \notag \\
\div_x \{ \mathbb{L}_j (\nabla_x p^0+ \mathbf{f}^* ) + \sum_{i=1}^N \mathbb{D}_{ji} (\nabla_x \Phi_i^0 + \mathbf{E} ) \} = 0\; \mbox{ in } \; \O , \notag
\end{gather}
with  boundary conditions (\ref{VAREP51}).
In particular, $\mathbf{E}$ and $ \mathbf{f}^* \in H^1 (G)^n$, with $\sigma \in C^1 (\partial F(\omega))$ bounded, imply that the pressure field $P^0 \in H^2 (G)$.
%is smoother
%than expected from the convergence in Theorem \ref{1.15} since it belongs to $$.
\end{corollary}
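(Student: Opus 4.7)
The plan has two parts: first obtain the stated system from the constitutive law \eqref{hom1}--\eqref{hom2}, then upgrade regularity from $H^1$ to $H^2$.

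For the first part, I would simply unpack the definitions. Recalling that $\mu_j(x) = -z_j(\Phi_j^0(x)+\Psi^{ext}(x))$ and $\mathbf{E}=\nabla\Psi^{ext}$, we have $\nabla_x \mu_j = -z_j(\nabla_x \Phi_j^0 + \mathbf{E})$. Substituting the gradient $\mathcal{F} = (\nabla_x p^0, \{\nabla_x \mu_j\})$ into \eqref{hom2} and absorbing the $z_j$-factors from $\nabla_x\mu_j$ into the columns $\mathbb{J}_j/z_j$ and $\mathbb{D}_{ji}/z_i$ of $\mathcal{B}$ immediately yields the two-block constitutive laws
\begin{align*}
\mathbf{u} &= -\mathbb{K}(\nabla_x p^0+\mathbf{f}^*) - \sum_{i=1}^N \mathbb{J}_i(\nabla_x \Phi_i^0+\mathbf{E}),\\
\mathbf{j}_j/\hbox{sign} &= -\mathbb{L}_j(\nabla_x p^0+\mathbf{f}^*) - \sum_{i=1}^N \mathbb{D}_{ji}(\nabla_x \Phi_i^0+\mathbf{E}).
\end{align*}
Combining with the divergence-free condition $\div_x\mathcal{J}=0$ from \eqref{hom1} produces the stated system. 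Symmetry of the resulting $(n(N+1))\times(n(N+1))$ coefficient matrix is nothing but the symmetry of $\mathcal{B}$ already established in Proposition \ref{prop.eff}, while strong ellipticity is its positive definiteness, also established there.

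For the $H^2$-regularity, I would apply standard elliptic regularity for constant-coefficient symmetric strongly elliptic systems on the smooth bounded domain $G$. The unknowns satisfy Dirichlet data $\Phi_j^0=0$ on $\partial G$ (from \eqref{VAREP51}) together with the Neumann-type condition $\mathbb{E}(\mathbf{u}^0)\cdot\nu=0$ on $\partial G$, which, through the Darcy-type law above, translates into a co-normal boundary condition for $p^0$. Under the hypothesis $\mathbf{f}^*,\mathbf{E}\in H^1(G)^n$, the right-hand sides $\div_x(\mathbb{K}\mathbf{f}^*+\sum_i\mathbb{J}_i\mathbf{E})$ and $\div_x(\mathbb{L}_j\mathbf{f}^*+\sum_i\mathbb{D}_{ji}\mathbf{E})$ lie in $L^2(G)$; the $C^1$-bound on $\sigma$ ensures that the coefficients $n_j^0(\omega)$ used to define $\mathbb{K},\mathbb{J}_i,\mathbb{L}_j,\mathbb{D}_{ji}$ via the cell problems are well-behaved, so the matrices are genuine constants. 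Agmon--Douglis--Nirenberg type regularity then gives $p^0,\Phi_j^0\in H^2(G)$.

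The only real obstacle is verifying that the mixed Dirichlet--Neumann system genuinely satisfies the complementing condition and that the co-normal operator associated to the elliptic part is well-defined; this is straightforward here because the system is diagonal-dominant in the sense that $p^0$ carries a pure Neumann boundary condition while each $\Phi_j^0$ carries a pure Dirichlet condition, so the two blocks separate nicely at the boundary and classical results (Lions--Magenes, or equivalently Agmon--Douglis--Nirenberg) apply directly.
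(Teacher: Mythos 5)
Your proposal is correct and is essentially the argument the paper intends: the corollary is stated without proof precisely because it amounts to expanding $\mathcal{J}=-\mathcal{B}\mathcal{F}-\mathcal{B}(\mathbf{f}^*,\{0\})$ using $\nabla_x\mu_j=-z_j(\nabla_x\Phi_j^0+\mathbf{E})$, inserting $\div_x\mathcal{J}=0$, reading off symmetry and strong ellipticity from the symmetry and positive definiteness of $\mathcal{B}$ proved in Proposition \ref{prop.eff}, and then invoking classical $H^2$ regularity for a constant-coefficient, symmetric, coercive variational system on the smooth domain $G$ with right-hand sides in $L^2$. One small inaccuracy in your justification (not in the conclusion): the boundary blocks do not decouple, since the conormal condition $\mathbb{E}(\mathbf{u}^0)\cdot\nu=0$ involves the full Darcy law and hence the $\nabla_x\Phi_i^0$ as well as $\nabla_x p^0$; the complementing condition nevertheless holds automatically because these mixed Dirichlet/conormal conditions are exactly the natural boundary conditions of the coercive symmetric form $\int_G\mathcal{B}\,\mathcal{F}(U)\cdot\mathcal{F}(V)\,dx$, and a careful bookkeeping of the signs in \eqref{micro1}--\eqref{micro3} shows the $\mathbb{J}_i$ and $\mathbb{D}_{ji}$ terms actually enter with the opposite sign to the $\mathbb{K}$ and $\mathbb{L}_j$ terms, a convention wrinkle already present in the paper's own display that does not affect symmetry or ellipticity.
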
}
%NOT YET DONE AFTER THIS LINE
\section{\label{StrongS} Strong convergence and correctors}
{%\color{blue}
Besides the  stochastic two-scale convergence of the microscopic fluxes and pressures to the effective ones,
we also prove  convergences of the energies.

First, we recall that the $L^2$-norm squared is lower semi continuous with respect to  the  stochastic two-scale convergence. In our situation, where after Corollary \ref{regularity} the limit functions are smooth, it can be seen through a simple direct argument. First, we notice  that
the formulas of scale separations (\ref{micro1}) and (\ref{micro3}) imply that the functions $\mathbf{u}^0 (x, \mathcal{T} (\frac{x}{\ep} ) \omega )$ and $\Phi_j^1 (x, \mathcal{T} (\frac{x}{\ep} )\omega )$
are admissible. Hence from
\begin{gather*}  \ep^2 \int_{\Omega} \int_G  | \nabla \mathbf{u}^\ep |^2 \ dx d\mu \geq \ep^2 \int_{\Omega} \int_G  | \nabla \mathbf{u}^0 (x, \mathcal{T} (\frac{x}{\ep} ) \omega ) |^2 \ dx d\mu +\\
 2\int_{\Omega} \int_G   \ep \nabla \mathbf{u}^0 (x, \mathcal{T} (\frac{x}{\ep} ) \omega ) \cdot \ep \nabla \big( \mathbf{u}^\ep - \mathbf{u}^0 (x, \mathcal{T} (\frac{x}{\ep} ) \omega )\big) \ dx d\mu \end{gather*}
and  passing to the limit $\ep \to 0$ gives
$$  \lim_{\ep \to 0}  \ep^2 \int_{\Omega} \int_G  | \nabla \mathbf{u}^\ep |^2 \ dx d\mu \, \geq \int_{\Omega\times \mathcal{F}} | \nabla_\omega \mathbf{u}^0 (x, \omega) |^2  \ d\mu dx. $$
Similarly,
$$  \lim_{\ep \to 0}  \int_{\Omega}
 \int_{G^\ep_f (\omega)}  n^j_\ep | \nabla \Phi_j^\ep |^2 \ dx d\mu
\geq \int_{\Omega \times \mathcal{F}} n_j^0 (\omega) | \nabla_x \Phi_j^0 (x) +
   \Phi_j^1 (x, \omega) |^2  \ dx d\mu .$$
A stronger result is
\begin{proposition}\label{convenerg}
We have the for $j=1,\dots , N$,
\begin{gather}
 \lim_{\ep \to 0}  \ep^2 \int_{\Omega} \int_G  | \nabla \mathbf{u}^\ep |^2 \ dx d\mu \, =\int_{\Omega\times \mathcal{F}} | \nabla_\omega \mathbf{u}^0 (x, \omega) |^2  \ d\mu dx ,\label{energu}\\
 \lim_{\ep \to 0}  \int_{\Omega}
 \int_{G^\ep_f (\omega)}  n^j_\ep | \nabla \Phi_j^\ep |^2 \ dx d\mu
= \int_{\Omega \times \mathcal{F}} n_j^0 (\omega) | \nabla_x \Phi_j^0 (x) +
   \Phi_j^1 (x, \omega) |^2  \ dx d\mu . \label{energd}
\end{gather}
\end{proposition}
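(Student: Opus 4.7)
The plan is to obtain convergence of the \emph{total} energy from an energy identity at the $\varepsilon$-level and a matching energy identity at the two-scale limit level, and then deduce convergence of each individual energy by combining this with the two lower-semicontinuity inequalities already stated in the paper.

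First I would test the variational formulation \eqref{VAREP} with the solution itself, i.e. $\xi=\mathbf{u}^\varepsilon$ and $\phi_j=\Phi_j^\varepsilon$. As noted in the proof of Proposition \ref{APRIORI1}, the antisymmetric cross-term $\sum_j z_j\int n^j_\varepsilon(\mathbf{u}^\varepsilon\cdot\nabla\Phi_j^\varepsilon-\mathbf{u}^\varepsilon\cdot\nabla\Phi_j^\varepsilon)$ vanishes, yielding the energy identity
\begin{equation}\label{enepsI}
\varepsilon^2\!\int_{G_f^\varepsilon(\omega)}\!|\nabla\mathbf{u}^\varepsilon|^2\,dx+\sum_{j=1}^N\frac{z_j^2}{\pej}\!\int_{G_f^\varepsilon(\omega)}\!n^j_\varepsilon|\nabla\Phi_j^\varepsilon|^2\,dx=\mathcal{R}^\varepsilon(\omega),
\end{equation}
where
\begin{equation*}
\mathcal{R}^\varepsilon(\omega)=\sum_{j=1}^N z_j\!\int_{G_f^\varepsilon(\omega)}\!n^j_\varepsilon\mathbf{E}\cdot\mathbf{u}^\varepsilon\,dx-\sum_{j=1}^N\frac{z_j^2}{\pej}\!\int_{G_f^\varepsilon(\omega)}\!n^j_\varepsilon\mathbf{E}\cdot\nabla\Phi_j^\varepsilon\,dx-\!\int_{G_f^\varepsilon(\omega)}\!\mathbf{f}^*\!\cdot\mathbf{u}^\varepsilon\,dx.
\end{equation*}
Taking the expectation and passing to the limit $\varepsilon\to 0$ in $\mathbb{E}\{\mathcal{R}^\varepsilon\}$ is straightforward: the product $z_j n^j_\varepsilon\mathbf{E}(x)$ is an admissible test function (since $n^j_\varepsilon(x,\omega)=n_j^0(\mathcal{T}(x/\varepsilon)\omega)$ is bounded and stationary, and $\mathbf{E}\in L^2(G)^n$), so the stochastic two-scale convergences \eqref{1.69} and \eqref{convgradPhi} give
\begin{equation*}
\lim_{\varepsilon\to 0}\mathbb{E}\{\mathcal{R}^\varepsilon\}=\int_{G\times\mathcal{F}}\!\!\Big[\sum_j z_j n_j^0\mathbf{E}\cdot\mathbf{u}^0-\sum_j\tfrac{z_j^2}{\pej}n_j^0\mathbf{E}\cdot(\nabla_x\Phi_j^0+\Phi_j^1)-\mathbf{f}^*\!\cdot\mathbf{u}^0\Big]\,dx d\mu=:\mathcal{R}^0.
\end{equation*}

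Next I would test the two-scale homogenized variational equation \eqref{VAREP3} with $\xi=\mathbf{u}^0$, $\varphi_j=\Phi_j^0$, $g_j=\Phi_j^1$. By Proposition \ref{uniq} this choice is admissible (with the pressure term $\int P^0\,\mathrm{div}_x\xi$ vanishing because $\mathbf{u}^0\in V$), and again the analogous antisymmetric cross-term cancels, producing the limit energy identity
\begin{equation}\label{enlimI}
\int_{G\times\mathcal{F}}|\nabla_\omega\mathbf{u}^0|^2\,dx d\mu+\sum_{j=1}^N\frac{z_j^2}{\pej}\int_{G\times\mathcal{F}}n_j^0|\nabla_x\Phi_j^0+\Phi_j^1|^2\,dx d\mu=\mathcal{R}^0.
\end{equation}
Comparing \eqref{enepsI} (after taking expectation) with \eqref{enlimI} gives convergence of the \emph{sum} of the two energies.

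The final step is to split this sum. The two lower-semicontinuity inequalities for $\varepsilon^2\mathbb{E}\{\int|\nabla\mathbf{u}^\varepsilon|^2\}$ and $\mathbb{E}\{\int n^j_\varepsilon|\nabla\Phi_j^\varepsilon|^2\}$ are precisely the ones explicitly derived just before the statement of the proposition. Since the sum of nonnegative sequences $a^\varepsilon+\sum_j b_j^\varepsilon$ converges to the sum of the respective lower bounds $a+\sum_j b_j$, and each summand is asymptotically bounded below by the corresponding limit, every single liminf must actually be a limit and must equal the claimed value; otherwise the inequalities would be strict on at least one component and could not sum to equality. This simultaneously yields \eqref{energu} and \eqref{energd}.

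The only nontrivial point I expect is the passage to the two-scale limit in the term $\int n^j_\varepsilon\mathbf{E}\cdot\nabla\Phi_j^\varepsilon\,dx$: because $\nabla\Phi_j^\varepsilon$ is extended and only $\chi_{G_f^\varepsilon(\omega)}\nabla\Phi_j^\varepsilon$ satisfies \eqref{convgradPhi}, one must pair $n^j_\varepsilon\mathbf{E}$ restricted to $G_f^\varepsilon(\omega)$ against the characteristic function of the fluid phase; admissibility of $\chi_{\mathcal{F}}(\mathcal{T}(x/\varepsilon)\omega)\,n_j^0(\mathcal{T}(x/\varepsilon)\omega)\mathbf{E}(x)$ and the convergence $\chi_{G_f^\varepsilon(\omega)}\to\chi_{\mathcal{F}}$ s.2-s.m. handle this cleanly.
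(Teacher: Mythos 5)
Your proposal is correct and follows essentially the same route as the paper: energy identity at the $\varepsilon$-level from testing \eqref{VAREP} with the solution, the matching limit energy identity from testing \eqref{VAREP3}, convergence of the right-hand sides via stochastic two-scale convergence, and then the standard splitting of the converging sum using the two lower-semicontinuity inequalities stated just before the proposition. The only difference is that you spell out the splitting argument and the admissibility of the test functions more explicitly than the paper does.
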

\begin{proof}
We follow the  proof from the periodic case (Allaire  \cite[Theorem 2.6]{All92} and Allaire et al \cite[Sec 5]{AllMiPi:10}).
We start from the energy equality corresponding to the variational equation (\ref{VAREP}):
\begin{gather}
\ep^2 \int_{G} |\nabla \mathbf{u}^\ep |^2  \ dx +\sum_{j=1}^N \frac{z_j^2}{\pej} \int_{G^\ep_f (\omega)} n_\ep^j | \nabla \Phi_j^\ep |^2 \ dx =
- \sum_{j=1}^N \frac{z_j^2}{\pej} \int_{G^\ep_f (\omega)} n_\ep^j  \mathbf{E} \cdot \nabla \Phi_j^\ep  \, dx +\notag \\
\sum_{j=1}^N z_j \int_{G^\ep_f (\omega)} n_j ^\ep \mathbf{E} \cdot \mathbf{u}^\ep \, dx
- \int_{G^\ep_f (\omega)} \mathbf{f}^*\cdot\mathbf{u}^\ep \, dx .
\label{Energepsilon}
\end{gather}
For the homogenized variational problem (\ref{VAREP3}) the energy equality reads
\begin{gather}
  \int_{\Omega\times \mathcal{F}} | \nabla_\omega \mathbf{u}^0 |^2  \ dx d\mu  + \sum_{j=1}^N \frac{z_j^2}{\pej} \int_{\Omega\times \mathcal{F}} n_j^0 (\omega) | \nabla_x \Phi_j^0 (x) +  \Phi_j^1 (x, \omega) |^2  \ dx d\mu =
- \sum_{j=1}^N \frac{z_j^2}{\pej} \int_{\Omega\times \mathcal{F}} n_j^0 (\omega ) \mathbf{E} \cdot \notag \\
\cdot (\nabla_x \Phi_j^0 (x)  +  \Phi_j^1 (x, \omega) ) \, dx d\mu
+ \sum_{j=1}^N z_j \int_{\Omega\times \mathcal{F}} n_j ^0 (\omega)\mathbf{E} \cdot \mathbf{u}^0 (x, \omega) \, dx d\mu
 - \int_{\Omega\times \mathcal{F}} \mathbf{f}^* \cdot \mathbf{u}^0 (x, \omega) \, dx d\omega .\label{Energehomog}
\end{gather}
In (\ref{Energepsilon}) we observe the convergence of the right-hand side to the right-hand side of  (\ref{Energehomog}). Next we use the lower semicontinuity, with respect to the  stochastic two-scale convergence,  of the left-hand side
%with respect to the two-scale convergence
and the equality (\ref{Energehomog}) to conclude (\ref{energu})-(\ref{energd}).
\end{proof}

\begin{theorem}\label{thmstrong}
Under the assumptions of Section \ref{Passlimit}, the following strong two-scale convergences hold
\begin{equation}\label{STVvelocity}
\lim_{\ep \to 0}\int_{\Omega} \int_G \left| \mathbf{u}^\ep(x) - \mathbf{u}^0 (x, \mathcal{T} (\frac{x}{\ep} )\omega)) \right|^2 \, dx d\mu =0
\end{equation}
 and
\begin{equation}\label{STspecies}
\lim_{\ep \to 0} \int_{\Omega} \int_{G^\ep_f (\omega)} \left| \nabla \left(   \Phi_j^\ep (x) - \Phi_j^0 (x)
   \right) - \Phi_j^1 (x, \mathcal{T} (\frac{x}{\ep} ) \omega) \right|^2 \,dx  d\mu=0.
\end{equation}
\end{theorem}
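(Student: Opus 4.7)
The plan is to realize the two-scale analogue of the Hilbert-space identity \emph{weak convergence plus convergence of norms implies strong convergence}, feeding it with the energy identities of Proposition~\ref{convenerg}. The crucial auxiliary fact, immediate from the $\mu$-preserving property of $\mathcal{T}$, is that for any admissible $\psi \in L^2(G \times \Omega)$,
\begin{equation*}
\int_\Omega \int_G |\psi(x, \mathcal{T}(x/\ep)\omega)|^2 \, dx \, d\mu = \int_\Omega \int_G |\psi(x, \omega)|^2 \, dx \, d\mu,
\end{equation*}
so that the $L^2$ norm of any reconstructed admissible function is exactly the desired two-scale limit.

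I would first prove (\ref{STspecies}) by expanding the non-negative quantity
\begin{equation*}
I_j^\ep := \int_\Omega \int_{G_f^\ep(\omega)} n_\ep^j(x) \bigl| \nabla \Phi_j^\ep(x) - \nabla_x \Phi_j^0(x) - \Phi_j^1(x, \mathcal{T}(x/\ep)\omega)\bigr|^2 \, dx \, d\mu.
\end{equation*}
Its $(\Phi_j^\ep)$-diagonal converges to $\int_G \int_\mathcal{F} n_j^0 |\nabla_x \Phi_j^0 + \Phi_j^1|^2$ by (\ref{energd}); the reconstructed diagonal equals the same integral by the invariance identity above; and the cross term, treated by weak two-scale testing (\ref{convgradPhi}) against the admissible function $n_j^0(\omega)(\nabla_x \Phi_j^0(x) + \Phi_j^1(x,\omega))$, yields twice the same quantity. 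The three contributions cancel, so $I_j^\ep \to 0$. Since the $L^\infty$ bound on $\Psi^0$ from Theorem~\ref{Limiteqequilib} forces $n_\ep^j \geq c > 0$ a.s., the weight may be stripped and (\ref{STspecies}) follows. An identical expansion applied to
\begin{equation*}
J^\ep := \int_\Omega \int_G \bigl| \ep \nabla \mathbf{u}^\ep - \nabla_\omega \mathbf{u}^0(x, \mathcal{T}(x/\ep)\omega)\bigr|^2 \, dx \, d\mu,
\end{equation*}
now driven by (\ref{energu}) and the weak two-scale convergence (\ref{1.70}), delivers $J^\ep \to 0$.

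To upgrade $J^\ep \to 0$ into the $L^2$ statement (\ref{STVvelocity}) I would apply a Poincar\'e inequality to $W^\ep(x,\omega) := \mathbf{u}^\ep(x) - \mathbf{u}^0(x, \mathcal{T}(x/\ep)\omega)$. Both summands vanish on every inclusion $\ep M_j(\omega)$ with $j \in \mathcal{J}(\ep)$: $\mathbf{u}^\ep$ by the Dirichlet condition (\ref{LIN4}), and the reconstruction because $\mathbf{u}^0(x,\cdot)|_\mathcal{M}=0$ by (\ref{Stokes2}) combined with $\mathcal{T}(x/\ep)\omega \in \mathcal{M}$ for $x \in \ep M_j(\omega)$. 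Hence the tessellation-based argument used in the proof of Lemma~\ref{Poinc} applies cell by cell on $G_1^\ep$ and gives $\int_{G_1^\ep}|W^\ep|^2 \, dx \leq C\ep^2 \int_{G_1^\ep}|\nabla W^\ep|^2 \, dx$. The decomposition $\ep \nabla W^\ep = [\ep \nabla \mathbf{u}^\ep - \nabla_\omega \mathbf{u}^0(x,\mathcal{T}(x/\ep)\omega)] - \ep \nabla_x \mathbf{u}^0(x, \mathcal{T}(x/\ep)\omega)$ reduces the right-hand side to $J^\ep$ plus an $O(\ep^2)$ term controlled by the $x$-regularity of the macroscopic part of $\mathbf{u}^0$ inherited from Corollary~\ref{regularity}, so the bulk contribution vanishes.

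The one step I anticipate as the main technical obstacle is the $O(\ep)$-thin boundary layer $G \setminus G_1^\ep$, where $\mathbf{u}^0(x, \mathcal{T}(x/\ep)\omega)$ need not vanish on $\partial G$ and the cell-wise Poincar\'e argument above no longer produces vanishing traces. On the reconstruction side, measure invariance together with absolute continuity of $\int_G \int_\Omega |\mathbf{u}^0|^2 \, dx \, d\mu$ forces the $L^2$ mass over this layer to tend to zero as $\ep \to 0$. On the $\mathbf{u}^\ep$ side, I would localize the Poincar\'e inequality in an $\ep$-collar of $\partial G$, exploiting $\mathbf{u}^\ep|_{\partial G} = 0$ together with the a priori bound (\ref{AprioriVelocity}) to absorb the boundary contribution. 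Gluing the interior Poincar\'e estimate with this boundary-layer bound completes (\ref{STVvelocity}).
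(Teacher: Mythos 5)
Your proposal follows essentially the same route as the paper: expand the squared difference, evaluate the diagonal terms via the energy convergences of Proposition \ref{convenerg} together with measure invariance of $\mathcal{T}$, kill the cross term by weak stochastic two-scale convergence against the admissible reconstructed limit, and finish the velocity estimate with the Poincar\'e inequality (\ref{eq.poincare}), stripping the uniformly positive weight $n^j_\ep$ for the ionic potentials. Your explicit treatment of the $O(\ep)$ boundary collar, where $\mathbf{u}^0(x,\mathcal{T}(x/\ep)\omega)$ lacks a zero trace so that Lemma \ref{Poinc} does not apply verbatim, is more careful than the paper, which invokes the Poincar\'e inequality without comment; this is a refinement of the same argument rather than a different method.
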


\begin{proof}
% By Corollary \ref{regularity} they are a.s.  in $H^1(G)$.
We have
\begin{gather}
\int_{\Omega} \int_G  \ep^2 |\nabla [\mathbf{u}^0 (x, \mathcal{T} (\frac{x}{\ep} ) \omega)]
- \nabla \mathbf{u}^\ep (x) |^2 \, dx d\mu
= \int_{\Omega} \int_G  |[\nabla_y \mathbf{u}^0] (x, \mathcal{T} (\frac{x}{\ep} ) \omega) |^2 \, dx d\mu \notag \\
%\hskip0cm
+ \int_{\Omega} \int_G \ep^2 |\nabla \mathbf{u}^\ep (x) |^2 \, dx d\mu
-  2 \int_{\Omega} \int_G \ep [\nabla_y \mathbf{u}^0] (x, \mathcal{T} (\frac{x}{\ep} ) \omega) \cdot \nabla \mathbf{u}^\ep (x) \, dx d\mu
+ O(\ep) . \label{Veloccorr}
\end{gather}
Using Proposition \ref{convenerg} for the second term in the right-hand side of (\ref{Veloccorr})
and passing to the two-scale limit in the third term in the right-hand side of (\ref{Veloccorr}),
we deduce
$$
\lim_{\ep \to 0}\int_{\Omega} \int_G \ep^2 \left| \nabla  \left(\mathbf{u}^\ep (x) - \mathbf{u}^0 ( x, \mathcal{T} (\frac{x}{\ep} ) \omega)\right)\right|^2 \ dx d\mu=0
$$
Now application of  Poincar\'e inequality (\ref{eq.poincare}) in $G_f^\ep (\omega)$ yields (\ref{STVvelocity}).

On the other hand, by virtue of Theorem \ref{PoissBoleq}, $n^\ep_j$ is uniformly
positive, i.e., there exists a constant $C>0$, which does not depend on $\ep$, such that
\begin{gather}
\int_{\Omega} \int_{G^\ep_f (\omega)}\int \left| \nabla \left(   \Phi_j^\ep (x) - \Phi_j^0 (x)
   \right) - \Phi_j^1 (x, \mathcal{T} (\frac{x}{\ep} ) \omega )\right|^2 \,dx d\mu \leq C \notag\\
\int_{\Omega} \int_{G^\ep_f (\omega)} n^\ep_j \left| \nabla \left(   \Phi_j^\ep (x) - \Phi_j^0 (x)
   \right)- \Phi_j^1 (x, \mathcal{T} (\frac{x}{\ep} ) \omega )\right|^2 \,dx d\mu.\label{dilatpot}
\end{gather}
Developing the right-hand side of (\ref{dilatpot}) as we just did for the velocity
and using the fact that $n_\ep^j(x)=n^0_j( \mathcal{T} (\frac{x}{\ep} ) \omega)$ is a two-scale test function,
we easily deduce (\ref{STspecies}).
\end{proof}
}

\begin {thebibliography} {99}

\bibitem{ACPDMP92}  E. Acerbi, V. Chiad\`o Piat, G. Dal Maso, D. Percivale,  An extension theorem from connected sets, and homogenization in general periodic domains. Nonlinear Analysis: Theory, Methods and Applications, 18(5) (1992), 481--496.
\bibitem{AM:03}
P. Adler, V. Mityushev,
\newblock Effective medium approximation and exact formulae
for electrokinetic phenomena in porous media,
J. Phys. A: Math. Gen. {\bf 36} (2003), 391-404.

\bibitem{ALL89}  Allaire, G.:  Homogenization of the Stokes Flow
in a Connected
    Porous Medium,  Asymptotic Analysis, {\bf 2} (1989), 203-222.
\bibitem{All97}  Allaire, G.: One-Phase Newtonian Flow, in \it
Homogenization and Porous Media \rm , ed. U.Hornung, Springer,
New-York, (1997), 45-68.
\bibitem{All92} Allaire, G.:
Homogenization and two-scale convergence,  SIAM J. Math. Anal.,
{\bf 23} (1992),  1482-1518.
\bibitem{ALL92a} Allaire, G.:
Homogenization of the unsteady Stokes equations in porous media,
in "Progress in partial differential equations: calculus of variations,
applications" Pitman Research Notes in Mathematics Series {\bf 267}, 109-123,
C. Bandle et al. eds, Longman Higher Education, New York (1992)

\bibitem{AllMiPi:10} G. Allaire,  A. Mikeli\'c,  A. Piatnitski,
\newblock Homogenization of the linearized ionic transport
equations in rigid periodic porous media,
\newblock {\em J. Math. Phys.} {\bf 51}, \rm 123103 (2010); doi:10.1063/1.3521555\rm 2010.

\bibitem{beta}  G. Allaire, J. F. Dufr\^eche,  A. Mikeli\'c,  A. Piatnitski,  Asymptotic analysis of the Poisson-Boltzmann equation describing electrokinetics in  porous media,  Nonlinearity, Vol. 26 (2013), p. 881--910.

\bibitem{AllBrizzDufMiPi:12}  G. Allaire, R. Brizzi, J. F. Dufr\^eche,  A. Mikeli\'c,  A. Piatnitski,
    Ion transport in  porous media: derivation of the macroscopic equations using up-scaling and properties
    of the effective coefficients,  Comput. Geosci, Vol. 17 (2013), no. 3, p.  479-496.

\bibitem{AllBrizzDufMiPi:14}  G. Allaire, R. Brizzi, J. F. Dufr\^eche,  A. Mikeli\'c,  A. Piatnitski,
 Role of  non-ideality for the  ion transport in  porous media: derivation of the macroscopic equations using upscaling,  Phys. D, 282 (2014), p. 39-60.
%\bibitem{AllBrDuMiPi:11}

\bibitem{ABDM2015}  G. Allaire, O. Bernard, J.-F. Dufr\^eche, A. Mikeli\'c, Ion transport through deformable porous media: derivation of the macroscopic equations using upscaling,  Comp. Appl. Math., Vol. 36 (2017), 1431-- 1462.

%    \bibitem{AS:81}
%J.-L. Auriault, T. Strzelecki,
%\newblock On the electro-osmotic flow in a saturated porous medium,
%\newblock Int. J. Engng Sci. {\bf 19} (1981), 915-928.

%\bibitem{BCEJ:97}
%V. Barcilon, D.-P. Chen, R. S. Eisenberg, J. W. Jerome,
%Qualitative properties of steady-state Poisson-Nernst-Planck systems: perturbation and simulation study. SIAM J. Appl. %Math. {\bf 57} (1997), no. 3, 631-648.

\bibitem{BK96} A.Y. Beliaev, S.M.  Kozlov,  Darcy equation for random porous media. Communications on pure and applied mathematics, 49(1) (1996), 1--34.
%\bibitem{blp}
%A.~Bensoussan, J.~L. Lions, G.~Papanicolaou.
%\newblock {\em Asymptotic analysis for periodic structures}, volume~5 of {\em
% Studies in Mathematics and its Applications}.
%\newblock North-Holland Publishing Co., Amsterdam, (1978).

%\bibitem{bothe1} Bothe, D., Fischer, A.,  Saal, J. (2014). Global well-posedness and stability of electrokinetic flows. SIAM %Journal on Mathematical Analysis, 46(2), 1263-1316.

%\bibitem{bothe2} Bothe, D., Fischer, A., Pierre, M.,  Rolland, G. (2014). Global existence for diffusion–electromigration %systems in space dimension three and higher. Nonlinear Analysis: Theory, Methods and Applications, 99, 152-166.

\bibitem{BMW} A.Bourgeat, A.Mikeli\'c, S.Wright, On the Stochastic
       Two-Scale Convergence in the Mean and Applications,
       \it Journal f\"ur die reine und angewandte Mathematik ( Crelles
       Journal ), \rm Vol. 456 (1994), pp. 19 -- 51.
\bibitem{BAP98} A.Bourgeat, A.Mikeli\'c, A.Piatniski,
%       On the double porosity model of single phase flow in random media
       Mod\`ele de double porosit\'e al\'eatoire \rm ,    \it C. R. Acad. Sci.
      Paris, S\'erie I, Math. , \rm t. 327 (1998),  p. 99-104.
\bibitem{BAP03} A. Bourgeat, A. Mikeli\'c , A. Piatnitski, On the double porosity model of single phase flow in random media,
 \it Asymptotic Analysis , \rm Vol. 34 (2003), p. 311-332.
%\bibitem{Cadene05}
%A. Cadene, S. Durand-Vidal, P. Turq, J. Brendle,
%\newblock
%Study of individual Na-montmorillonite particles size, morphology, and apparent charge
%\newblock
% {\em J. Coll. Int. Sci.}, 285:719-730, (2005).

\bibitem{Cardenas} H.E. Cardenas, L.J. Struble,  Electrokinetic nanoparticle treatment of hardened cement paste for reduction of
permeability. J. Mater. Civil Eng.18(4) (2006), 554--560.

\bibitem{CioSJP79} D. Cioranescu, J. Saint-Jean-Paulin, Homogenization in open
sets with holes, J. Math. Anal. Appl. Vol. 71 (1979), 590--607.

\bibitem{CSTA:96}
D. Coelho, M. Shapiro, J.F. Thovert, P. Adler,
Electro-osmotic phenomena in porous media,
J. Colloid Interface Sci. {\bf 181} (1996), 169-90.

\bibitem{Dorm:03}
L. Dormieux, E. Lemarchand, O. Coussy,
Macroscopic and Micromechanical Approaches to
the Modelling of the Osmotic Swelling in Clays, Transport in Porous Media, {\bf 50} (2003), 75-91.

\bibitem{Dufreche05}
{J.-F. Dufr\^{e}che, O. Bernard, S. Durand-Vidal, P. Turq}.
\newblock Analytical theories of transport in concentrated electrolyte
 solutions from the msa.
\newblock {\em J. Phys.Chem. B}, 109:9873, (2005).

%\bibitem{Dufreche05bis}
%J.-F. Dufr\^{e}che, V. Marry, N. Malikova, P. Turq,
%\newblock Molecular hydrodynamics for electro-osmosis in clays: from kubo to
% smoluchowski.
%\newblock {\em J. Mol. Liq}, 118:145, (2005).

\bibitem{ern} Ern, A., Joubaud, R.,  Leli\`evre, T. (2012). Mathematical study of non-ideal electrostatic correlations in equilibrium electrolytes. Nonlinearity, 25(6), 1635.

\bibitem{GT} D. Gilbarg, N. S. Trudinger, Elliptic partial differential equations of second order. Springer, 1983.

%\bibitem{Ons} S. R. de Groot, P. Mazur,
%Non-Equilibrium Thermodynamics,
%North-Holland, Amsterdam, (1969).

\bibitem{GCA:06}
A.K. Gupta, D. Coelho and P. Adler,
Electroosmosis in porous solids for high zeta potentials,
Journal of Colloid and Interface Science {\bf 303} (2006), 593-603.

%\bibitem{GCA:07}
%A.K. Gupta, D. Coelho, P. Adler,
%Ionic transport in porous media for high zeta potentials,
%Journal of Colloid and Interface Science {\bf 314} (2006), 733-747.

%\bibitem{hornung}
%U. Hornung, editor.
%\newblock {\em Homogenization and porous media},
%Vol. {\bf 6} of {\em Interdisciplinary Applied Mathematics}.
%\newblock Springer-Verlag, New York, (1997).

\bibitem{HLL20} Hudson, T., Legoll, F.,  Leli\`evre, T. (2020). Stochastic homogenization of a scalar viscoelastic model exhibiting stress-strain hysteresis. ESAIM: Mathematical Modelling and Numerical Analysis, 54(3), 879--928.

%\bibitem{jerome} Jerome, J. W. (2011). The steady boundary value problem for charged incompressible fluids: PNP/Navier–Stokes %systems. Nonlinear Analysis: Theory, Methods and Applications, 74(18), 7486-7498.

\bibitem{JKO94} V.V. Jikov,  S.M. Kozlov,  O.A. Oleinik, {\em  Homogenization of differential operators and integral functionals}, Springer Science and Business Media, 1994.

\bibitem{KBA:05}
G. Karniadakis, A. Beskok, N. Aluru,
Microflows and Nanoflows. Fundamentals and Simulation.
Interdisciplinary Applied Mathematics, Vol. 29, Springer, New York, (2005).

%\bibitem{Leroy04}
%P. Leroy, A. Revil,
%\newblock A triple layer model of the surface electrochemical properties of
% clay minerals.
%\newblock {\em J. Coll. Int. Science}, 270:371, (2004).

%\bibitem{LIO:81}
%J.-L. Lions,
%Some methods in the mathematical analysis of systems and their controls.
%Science Press, Beijing, Gordon and Breach, New York, (1981).

\bibitem{LA} Lipton, R. and  Avellaneda, M.:  A Darcy Law for Slow
Viscous Flow Past a Stationary Array of Bubbles,  Proc. Royal Soc. Edinburgh {\bf 114A},
   (1990), 71-79.

%\bibitem{L:06} J.R. Looker,
%Semilinear elliptic Neumann problems and rapid growth in the nonlinearity. Bull. Austral. Math. Soc., Vol. {\bf 74}, no.2, %(2006), 161-175.

\bibitem{LC:06}
J.R. Looker, S.L. Carnie,
Homogenization of the ionic transport
equations in periodic porous media. Transp. Porous
Media {\bf 65} (2006), 107-131.

\bibitem{Lyklema} J. Lyklema,
Fundamentals of Interface ans Colloid Science, Academic Press, Vol 2 (1995).

\bibitem{Mahmoud} A. Mahmoud, J. Olivier, J. Vaxelaire, A. F. A.  Hoadley,  Electrical field: a historical review of its application and contributions in wastewater sludge dewatering. Water Res.44(8) (2010), 2381--2407.

%\bibitem{Malikova04}
%N. Malikova, V. Marry, J.-F. Dufr\^{e}che, P. Turq,
%\newblock Temperature effect in a montmorillonite clay at low
% hydration-microscopic simulation.
%\newblock {\em Mol. Phys.}, 102:1965, (2004).

\bibitem{MSA:01}
S. Marino, M. Shapiro, P. Adler,
Coupled transports in heterogeneous media,
J. Colloid Interface Sci. {\bf 243} (2001), 391-419.

%\bibitem{Marry03bis}
%V. Marry, J.-F. Dufr\^{e}che, M. Jardat, P. Turq,
%\newblock Equilibrium and electrokinetic phenomena in charged porous media from
% microscopic and mesoscopic models: electro-osmosis in montmorillonite.
%\newblock {\em Mol. Phys.}, 101:3111, (2003).

%\bibitem{Marry03}
%V. Marry, P. Turq,
%\newblock Microscopic simulations of interlayer structure and dynamics in
% bihydrated heteroionic montmorillonites.
%\newblock {\em J. Phys. Chem. B}, 107:1832, (2003).

\bibitem{MM:02}
C. Moyne, M. Murad,
Electro-chemo-mechanical couplings in swelling clays
derived from a micro/macro-homogenization procedure, Int. J. Solids Structures {\bf 39} (2002),
6159-6190.

\bibitem{MM:03}
C. Moyne, M. Murad,
Macroscopic behavior of swelling porous media derived
from micromechanical analysis, Transport Porous Media {\bf 50} (2003), 127-151.

\bibitem{MM:06}
C. Moyne, M. Murad,
A Two-scale model for coupled electro-chemomechanical
phenomena and Onsager's reciprocity relations in expansive clays: I Homogenization
analysis, Transport Porous Media {\bf 62} (2006), 333-380.

\bibitem{MM:06a}
C. Moyne, M. Murad,
A two-scale model for coupled
electro-chemo-mechanical phenomena and Onsager's reciprocity
relations in expansive clays: II. Computational validation.
Transp. Porous Media {\bf 63(1)} (2006), 13-56.

\bibitem{MM:08}
C. Moyne, M. Murad,
A dual-porosity model for ionic solute
transport in expansive clays, Comput Geosci {\bf 12} (2008), 47-82.

\bibitem{NGU} Nguetseng, G.:
A general convergence result for a functional related to the
theory of homogenization, SIAM J. Math. Anal. {\bf20}(3), 608--623 (1989).

\bibitem{OBW:78}
R. W. O'Brien, L. R. White,
Electrophoretic mobility of a spherical colloidal particle,
J. Chem. Soc., Faraday Trans. {\bf 2 74(2)} (1978), 1607-1626.

\bibitem{Ottosen} L.M. Ottosen, I.V. Christensen, I. Rorig-Dalgard, P.E. Jensen, H.K., Hansen,  Utilization of electromigration
in civil and environmental engineering - Processes, transport rates and matrix changes. J. Environ. Sci.
Health A-Toxic/Hazard. Subst. Environ. Eng. 43(8) (2008), 795--809.

%\bibitem{PJ:97} J.-H. Park, J. W. Jerome,
%Qualitative properties of steady-state Poisson-Nernst-Planck systems: mathematical study. SIAM J. Appl. Math. {\bf 57} %(1997), no. 3, 609-630.

%\bibitem{Rayetal:11} N. Ray, Ch. Eck, A. Muntean, P. Knabner,
%\rm Variable Choices of Scaling in the
%Homogenization of a Nernst-Planck-Poisson
%Problem, preprint no. 344, Institut f\"ur Angewandte Mathematik, Universitaet Erlangen-N\"urnberg, (2011).

\bibitem{Rayetal:12} N. Ray, A. Muntean, P. Knabner,
Rigorous homogenization of a Stokes--Nernst--Planck--Poisson system,
Journal of Mathematical Analysis and Applications, Vol. {\bf 390} (2012), 374-393.

\bibitem{Rayetal:12b} N. Ray, T. van Noorden, F. Frank, P. Knabner,
{\em Multiscale Modeling of Colloid and Fluid Dynamics in Porous Media
Including an Evolving Microstructure,}
Transp. Porous Med. {\bf 95} (2012), pp. 669-696.

%\bibitem{RPA:06}
%M. Rosanne, M. Paszkuta, P. Adler,
%{\em Electrokinetic phenomena in saturated compact clays,}
%Journal of Colloid and Interface Science, {\bf 297} (2006), 353-364.

%\bibitem{Rosanne04}
%M. Rosanne, M. Paszkuta, J.F. Thovert, P. Adler,
%\newblock Electro-osmotic coupling in compact clays.
%\newblock {\em Geophys. Research Lett.}, 31:L18614, (2004).
%\bibitem{sanchez}

\bibitem{schmuck09} M. Schmuck,  Analysis of the Navier--Stokes--Nernst--Planck--Poisson system. Mathematical Models and Methods in Applied Sciences, 19(06) (2009), 993--1014.

\bibitem{schmuck} M. Schmuck,
{\em Modeling And Deriving Porous Media Stokes--Poisson--Nernst--Planck
Equations By A Multiple-Scale Approach,}
Comm. Math. Sci. {\bf 9} (2011), 685-710.

\bibitem{schmuck2} M. Schmuck,
{\em First error bounds for the porous media approximation of
the Poisson-Nernst-Planck equations,}
ZAMM Z. Angew. Math. Mech. {\bf 92} (2012), pp. 304-319.

\bibitem{schmuckBaz} M. Schmuck, M. Z. Bazant,  Homogenization of the Poisson--Nernst--Planck equations for ion transport in charged porous media. SIAM Journal on Applied Mathematics, 75(3) (2015), 1369--1401.

\bibitem{Ta1980}   Tartar, L.:
Convergence of the Homogenization Process, \it
   Appendix of \rm %\cite{sanchez}.
  E.~Sanchez-Palencia.
\newblock {\em Nonhomogeneous media and vibration theory},
Vol. {\bf 127} {\em Lecture Notes in Physics},
\newblock Springer-Verlag, Berlin, (1980).
\bibitem{TemamNS} Temam, R.:  Navier Stokes equations, North Holland, Amsterdam
(1977).
\bibitem{wright} S. Wright, On the Steady-State Flow of an Incompressible Fluid through a Randomly Perforated Porous Medium, Journal of Differential Equations, Vol. 146 (1998), p. 261-286.
\bibitem{ZhPi} V. V. Zhikov, A. L. Pyatnitskii, Homogenization of random singular
structures and random measures, {\sl Izvestiya: Mathematics}, {\bf 70}(3)  (2006),  p.19--67.

%\bibitem{Trizac00}
%{E. Trizac, E L. Bocquet, J. J. Weiss M. Aubouy}.
%\newblock Effective interactions and phase behaviour for a model clay
% suspension in an electrolyte,
%\newblock {\em J. Phys. Cond. Matt.}, 122:11459, (2000).

\end{thebibliography}
\end{document}